
\documentclass{amsart}

\usepackage{amssymb}
\usepackage[all]{xy}
\usepackage{caption}
\usepackage[pdftex]{hyperref}
\usepackage{graphicx}

\setlength{\oddsidemargin}{5pt}
\setlength{\evensidemargin}{5pt}
\setlength{\textwidth}{450pt}
\setlength{\textheight}{630pt}
\setlength{\topmargin}{0pt}

\setlength{\parskip}{0.5\baselineskip}

\newtheorem{thm}{Theorem}
\newtheorem{lem}[thm]{Lemma}
\newtheorem{prop}[thm]{Proposition}
\newtheorem{cor}[thm]{Corollary}

\theoremstyle{definition}

\newcommand{\pgl}{\PGL(2,\mathbb{C})}
\newcommand{\zz}{ \mathbb{Z}_{2}\times \mathbb{Z}_{2}}
\newcommand{\sldos}{\SL(2,\mathbb{C})}
\DeclareMathOperator{\tr}{tr}
\DeclareMathOperator{\Id}{Id}


\DeclareMathOperator{\GL}{GL}

\DeclareMathOperator{\SL}{SL}

\DeclareMathOperator{\PGL}{PGL}

\DeclareMathOperator{\Gr}{Gr}

\DeclareMathOperator{\Tr}{Tr\,}       

\newcommand{\cD}{\mathcal{D}}
\newcommand{\cM}{\mathcal{M}} 

\newcommand{\cH}{\mathcal{H}}
\newcommand{\x}{\times}
\newcommand{\CC}{\mathbb{C}} 
\newcommand{\PP}{\mathbb{P}} 
\newcommand{\RR}{\mathbb{R}} 
\newcommand{\ZZ}{\mathbb{Z}} 
\newcommand{\too}{\longrightarrow}
\newcommand{\imat}{\sqrt{-1}} 
\newcommand{\surj}{\twoheadrightarrow}
\newcommand{\im}{\mathrm{im}\,}

\author[J. Mart\'{\i}nez]{Javier Mart\'{\i}nez}
\address{Facultad de Matem\'aticas, Universidad Complutense de Madrid,
Plaza Ciencias 3, 28040 Madrid, Spain}
\email{javiermartinez@mat.ucm.es}

\author[V. Mu\~{n}oz]{Vicente Mu\~{n}oz}
\address{Facultad de Matem\'aticas, Universidad Complutense de Madrid,
Plaza Ciencias 3, 28040 Madrid, Spain}
\address{Instituto de Ciencias Matem\'aticas (CSIC-UAM-UC3M-UCM),
C/ Nicolas Cabrera 15, 28049 Madrid, Spain}
\email{vicente.munoz@mat.ucm.es}

\title[E-polynomials of $\SL(2,\CC)$-character varieties]{E-polynomials 
of $\SL(2,\mathbb{C})$-character varieties \\ of complex curves of genus $3$}

\date{15 May 2014}

\subjclass[2010]{Primary: 14C30. Secondary: 14D20, 14L24, 32J25}

\keywords{Moduli spaces, E-polynomial, character variety, surface group}

\begin{document}

\begin{abstract}
We compute the E-polynomials of the moduli spaces of representations of the
fundamental group of a complex curve of genus $g=3$ into $\SL(2,\CC)$,
and also of the moduli space of twisted representations. The case
of genus $g=1,2$ has already been done in \cite{lomune}. 
We follow the geometric technique introduced in \cite{lomune}, based on stratifying the space of
representations, and on the analysis of the behaviour of the E-polynomial under fibrations.
\end{abstract}

\maketitle

\section{Introduction}\label{sec:introduction}

Let $X$ be a smooth complex projective curve of genus $g\geq 1$, and let $G$ be a complex reductive group.
The $G$-character variety  of $X$ is defined as the moduli space of semisimple representations of
$\pi_{1}(X)$ into $G$, that is,
 $$
  \cM (G)= \{(A_{1},B_{1},\ldots,A_{g},B_{g}) \in G^{2g} \,| \prod_{i=1}^{g}[A_{i},B_{i}]=\Id\}/ / G.
 $$
For complex linear groups $G=\GL(n,\CC), \SL(n,\CC)$, the representations of $\pi_1(X)$ into $G$ can be understood
as $G$-local systems $E\to X$, hence defining
a flat bundle $E$ which has $\deg E=0$. A natural generalization consists of allowing bundles $E$ of
non-zero degree $d$. The $G$-local systems on $X$ correspond to representations $\rho:\pi_1(X - \{p_0\}) \to G$,
where $p_0\in X$ is a fixed point, and $\rho(\gamma)= \frac{d}{n} \Id$, $\gamma$ a loop around $p_0$, giving
rise to the moduli space of twisted representations
 $$
  \cM^d (G)= \{(A_{1},B_{1},\ldots,A_{g},B_{g}) \in G^{2g} \,| \prod_{i=1}^{g}[A_{i},B_{i}]=e^{2\pi \imat d/n} \Id \}/ / G.
 $$

The space $\cM^d(G)$ is known in the literature as the Betti moduli space.
This space is closely related to two other spaces: the De Rham and Dolbeault moduli spaces. The De Rham moduli space
$\cD^d(G)$ is the moduli space parameterizing flat bundles, i.e., $(E,\nabla)$, where $E\to X$ is an algebraic bundle of
degree $d$ and rank $n$ (and fixed determinant in the case $G=\SL(n,\CC)$), and
$\nabla$ is an algebraic connection on $X- \{p_0\}$ with a logarithmic pole at
$p_0$ with residue $-\frac{d}{n} \Id$. The Riemann-Hilbert correspondence
\cite{deligne:1970,simpson:1995} gives an analytic isomorphism (but not an algebraic isomorphism)
$\cD^d(G) \cong \cM^d(G)$.
The Dolbeault moduli space $\cH^d(G)$ is the moduli space of $G$-Higgs
bundles $(E,\Phi)$, consisting of an algebraic bundle
$E\to X$ of degree $d$ and rank $n$ (also with fixed determinant in the case $G=\SL(n,\CC)$),
and a homomorphism $\Phi:E\to E\otimes K_X$, known as the Higgs field (in the case $G=\SL(n,\CC)$, 
the Higgs field has trace $0$). In this situation the
theory of harmonic bundles \cite{corlette:1988,simpson:1992} gives a homeomorphism $\cM^d(G)\cong \cH^d(G)$.

When $\gcd(n,d)=1$ these moduli spaces are smooth and the underlying differentiable manifold is
hyperk\"ahler, but the complex structures do not correspond under the previous isomorphisms.
The cohomology of these moduli spaces has been computed in several particular cases, mostly
from the point of view of the Dolbeault moduli space $\cH^{d}(G)$.
Poincar\'e polynomials for $G=\SL(2,\CC)$ were computed by Hitchin in his seminal paper on Higgs
bundles \cite{hitchin:1987} and for $G=\SL(3,\CC)$ by Gothen in \cite{gothen:1994}.
More recently the case of $G=\GL(4,\CC)$ has been solved in 
\cite{garciaprada-heinloth-schmitt:2011}, and a recursive formula for the motive of the moduli space of Higgs bundles of arbitrary rank and degree coprime to the rank has been given in \cite{oscar.}. In particular, this gives the Betti numbers of the character variety for arbitrary coprime rank and degree.

Hausel and Rodriguez-Villegas started the computation of the E-polynomials of $G$-character
varieties focusing on $G=\GL(n,\CC), \SL(n,\CC)$ and $\PGL(n,\CC)$ using arithmetic methods inspired by the Weil
conjectures. In \cite{hausel-rvillegas:2007} they obtained the E-polynomials of $\cM^{d}(G)$
for $G=\GL(n,\CC)$ in terms of a simple generating function.
Following these methods, Mereb \cite{mereb:2010} studied the case of $\SL(n,\CC)$ giving
an explicit formula for the E-polynomial in the case $G=\SL(2,\CC)$, while for $\SL(n,\CC)$ 
these polynomials are given in terms of a generating function. He proved also that the E-polynomials 
for $\SL(n,\CC)$ are palindromic and monic. 

Logares, Newstead and the second author \cite{lomune} introduced a geometric technique
to compute the E-polynomial of character varieties 
by using stratifications and also handling fibrations
which are locally trivial in the analytic topology but not in the Zariski topology. The
main results of \cite{lomune} are explicit formulae for the E-polynomials
for character varieties for $G=\SL(2,\CC)$ and $g=1,2$. Actually, the geometric technique
allows for dealing with ``twisted'' character varieties in which the holonomy around the puncture
is not diagonizable (in this case there is no correspondence with 
a De Rham or Dolbeault moduli space as mentioned above).

The purpose in this paper is to give another step in the geometric technique of \cite{lomune}
to the case of $g=3$. We prove the following

\begin{thm}\label{thm:main}
  Let $X$ be a complex curve of genus $g=3$. Then the E-polynomials of the character varieties are:
  \begin{align*}
    e(\cM(\SL(2,\CC))) &= q^{12}-4q^{10}+74q^8+375q^6+16q^4+q^2+1\, , \\
    e(\cM^1(\SL(2,\CC))) &= q^{12}-4q^{10}+6q^8-252q^7-14q^6-252q^5+6q^4-4q^2+1 \, ,
  \end{align*}
where $q=\, uv$. 
\end{thm}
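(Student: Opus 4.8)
The plan is to follow the geometric technique of \cite{lomune}, working throughout at the level of E-polynomials and exploiting their additivity under locally closed stratification together with their multiplicativity under fibrations that are locally trivial in the analytic topology. Write $\xi\colon\sldos^{6}\to\sldos$ for the product-of-commutators map, $\xi(A_1,B_1,A_2,B_2,A_3,B_3)=[A_1,B_1][A_2,B_2][A_3,B_3]$, and set $R=\xi^{-1}(\Id)$ and $R^{-}=\xi^{-1}(-\Id)$. Since $\cM(\sldos)$ and $\cM^{1}(\sldos)$ are the GIT quotients of $R$ and $R^{-}$ by the conjugation action of $\pgl$, the whole computation splits into two tasks: (i) determining $e(R)$ and $e(R^{-})$, and (ii) controlling how $e$ behaves under the passage to the $\pgl$-quotient.

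For task (ii) I would stratify by representation type. On the irreducible locus $\pgl$ acts freely, by Schur's lemma, so the quotient map is a principal $\pgl$-bundle; it is only locally trivial in the analytic topology, but, as in \cite{lomune}, the relevant monodromy acts trivially on the E-polynomial of the fibre, so $e$ is multiplicative and the irreducible contribution equals $e(R_{\mathrm{irr}})/e(\pgl)$ with $e(\pgl)=q^{3}-q$. For the twisted variety this is the entire story: a product of commutators of upper-triangular matrices has trivial diagonal part and therefore can never equal $-\Id$, so $R^{-}$ has \emph{no} reducible points and $e(\cM^{1}(\sldos))=e(R^{-})/(q^{3}-q)$. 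For the untwisted variety one must additionally treat the reducible locus of $R$: such representations are conjugate into the Borel subgroup, and, separating the completely reducible ones (which descend to representations into the maximal torus modulo the Weyl action $\ZZ_{2}$, a quotient of $(\CC^{*})^{6}$) from the strictly upper-triangular extensions, their contribution is computed exactly as in the $g=1,2$ analysis of \cite{lomune}.

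Task (i) is the heart of the matter, and I would carry it out recursively via the factorization $\xi=\xi'\cdot[A_3,B_3]$, where $\xi'$ is the genus-$2$ product of commutators. The target $\sldos$ is stratified by conjugacy type into the central classes $\{\Id\}$ and $\{-\Id\}$, the one-parameter family of regular semisimple classes (fibred over the trace $t\neq\pm2$), and the four unipotent-type classes $\pm\left(\begin{smallmatrix}1&1\\0&1\end{smallmatrix}\right)$ up to conjugacy. The E-polynomial of a commutator fibre $\{(A,B):[A,B]=\Lambda\}$ depends only on the stratum of $\Lambda$, and these fibrewise polynomials are precisely the building blocks already computed in \cite{lomune}. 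Fibring $\xi^{-1}(\Lambda)$ over the value of $[A_3,B_3]$ and convolving over the strata expresses $e(\xi^{-1}(\Lambda))$ as a function of the type of $\Lambda$; iterating this convolution from $g=1$ through $g=3$ produces $e(R)$ and $e(R^{-})$.

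The main obstacle is the bookkeeping of this convolution at $g=3$. In contrast with the lower-genus cases, an intermediate product $\xi'$ may land in every stratum with nontrivial multiplicity, so one must track a full ``type distribution'' over the conjugacy strata and record how each stratum is redistributed after multiplication by a single commutator $[A_3,B_3]$; this is a finite but sizable linear-algebra calculation. A secondary point to verify at every step is that each fibration really is multiplicative for $e$: over the loci where only analytic (not Zariski) local triviality holds, one must check, following \cite{lomune}, that the structure group acts trivially on the cohomology of the fibre, so that no correction to the product formula is needed. Substituting the resulting $e(R)$ and $e(R^{-})$ into the quotient formulae of task (ii) then yields the two stated E-polynomials.
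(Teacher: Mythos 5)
Your overall skeleton (stratify by the conjugacy type of the commutators, reuse the genus-$1$ building blocks of \cite{lomune}, treat reducibles separately, divide by $e(\pgl)=q^3-q$ on the irreducible locus) matches the paper's, and two of your side claims are correct: $\xi^{-1}(-\Id)$ contains no reducible representations, and the polystable reducible locus of $\cM$ is $(\CC^*)^6/\ZZ_2$. But there is a genuine gap at the heart of your task (i). Your ``convolution'' assumes that over the one-parameter family of regular semisimple conjugacy classes the E-polynomial of the total space is the product of the E-polynomials of base and fibre, and your planned verification that ``the structure group acts trivially on the cohomology of the fibre, so that no correction to the product formula is needed'' fails exactly there: the monodromy is \emph{not} trivial. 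The family $\overline{X}_4\to\CC-\{0,\pm1\}$ of pairs with $[A,B]=\xi_\lambda$ has monodromy of order two around $\lambda=0$; by \cite[Theorem 6.1]{lomune} its Hodge monodromy representation is $R(\overline{X}_4)=(q^3-1)T+(3q^2-3q)N$ with $N$ nontrivial, so that $e(\overline{X}_4)=q^4-3q^3-6q^2+5q+3$, whereas the naive product $(q-3)(q^3+3q^2-3q-1)=q^4-12q^2+8q+3$ is off by $3q(q-1)^2$. The same failure occurs for the genus-$2$ family $\overline{Y}_4$, and a further layer of monodromy appears because $\xi_\lambda\sim\xi_{\lambda^{-1}}$: the semisimple classes are indexed by the trace, and the double cover $\lambda\mapsto\lambda+\lambda^{-1}$ produces the representations $S_2,S_{-2},S_0$ that must also be tracked.

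Consequently, a convolution that records only E-polynomials of fibres discards exactly the information that determines the answer; one must convolve Hodge monodromy representations. That is the actual content of the paper: Theorem \ref{thm:general-fibr} converts a Hodge monodromy representation over a general base into an E-polynomial; Corollary \ref{cor:thm1} handles the base $\CC^*\times\CC^*$, which is unavoidable at $g=3$ because the locus where the three commutators share an eigenvector fibres over the two-dimensional quadric $C$ (this is the genuinely new phenomenon compared to $g=1,2$); and for the untwisted variety one needs the Hodge monodromy representation $R(\overline{Y}_4/\ZZ_2)$ of the genus-$2$ family, which is strictly finer data than any E-polynomial computed in \cite{lomune} and occupies all of Section \ref{sec:charvar-g=2} of the paper, where it is obtained indirectly by solving four linear equations coming from independent computations of $e(W_4)$, $e(Y_4)$, the fibre, and $R(\overline{Y}_4)$. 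Without these monodromy corrections your iterated convolution is well defined as a procedure but returns wrong coefficients at every stratum involving a varying semisimple class.
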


The new input consists of using the map $\cM(\SL(2,\CC)) \to \CC^3$ given by $(A_1,B_1,A_2,B_2,A_3,B_3)\mapsto
(t_1,t_2,t_3)$, $t_i=\tr ([A_i,B_i])$, to stratify conveniently the space of representations
in $\SL(2,\CC)^6$. This leads us to analyse a fibration over a two-dimensional basis (the
quadric $C$ in Figure \ref{fig1}), for which 
we have to compute the E-polynomial. Therefore the arguments of \cite{lomune}
about Hodge monodromy fibrations over one-dimensional bases have to be extended to 
general bases (see Theorem \ref{thm:general-fibr}). 
In the case of $\cM(\SL(2,\CC))$ we also have to deal with reducible representations. The arguments
used in \cite[Section 8.2.2]{lomune} to compute the E-polynomial of the irreducible locus 
give an increasingly large number of strata as the genus increases. So here we use an
alternative route (suggested by the ideas in \cite{lamu}), consisting of computing
the E-polynomial of the irreducible locus by subtracting the E-polynomial of 
the subset of reducible representations.

As a byproduct of our analysis, we obtain the behaviour of the E-polynomial of the parabolic character
variety ($G=\SL(2,\CC)$)
 $$
  \cM^\lambda (G)= \left\{(A_{1},B_{1},\ldots,A_{g},B_{g}) \in G^{2g} \,| \prod_{i=1}^{g}[A_{i},B_{i}]=
  \begin{pmatrix} \lambda & 0 \\ 0 &\lambda^{-1} \end{pmatrix} \right\}/ / G.
 $$
when $\lambda$ varies in $\CC-\{0,\pm 1\}$, for the case $g=2$. This is given by the following formula
  \begin{equation}\label{eqn:RcM}
  R(\cM^\lambda)=(q^8+q^7-2q^6-2q^5+4q^4-2q^3-2q^2+q+1)T+ 15(q^5-2q^4+q^3)N.
  \end{equation}
which means that the E-polynomial of the invariant part of the cohomology is the polynomial accompanying $T$,
and the  E-polynomial of the non-invariant part is the polynomial accompanying $N$.  This monodromy is of interest
from the point of view of Mirror Symmetry \cite{hausel-thaddeus:2003} and answers a question raised to the authors by Tam\'as Hausel.

After finishing and submitting a first version of this paper, the authors have managed to complete the computation of E-polynomials of character varieties for surfaces of general genus $g \geq 3$ in \cite{mamu2}. The case provided here cannot however be subsumed in \cite{mamu2}, since it serves as the starting point of induction on the genus carried there. Moreover, this specific case of $g=3$ has special features and has to be treated by an \textit{ad-hoc} method (it is the only case that needs the analysis of Hodge monodromy representations over a $2$-dimensional basis).

\noindent\textbf{Acknowledgements.} 
We would like to thank Peter Newstead, Tam\'as Hausel, Martin Mereb, Nigel Hitchin, Marina Logares and Oscar Garc\'{\i}a-Prada for useful conversations.
This work has been partially supported by MICINN (Spain) Project MTM2010-17389. The first author was also supported by a FPU scholarship from the Spanish Ministerio de Educaci\'on.

\section{E-polynomials}\label{sec:method}

Our main goal is to compute the Hodge-Deligne polynomial of $\SL(2,\CC)$-character varieties. 
We will follow the methods in \cite{lomune}, so we collect some basic results from \cite{lomune} in this section.

\subsection{E-polynomials}
We start by reviewing the definition of E-polynomials. 
A pure Hodge structure of weight $k$ consists of a finite dimensional complex vector space
$H$ with a real structure, and a decomposition $H=\bigoplus_{k=p+q} H^{p,q}$
such that $H^{q,p}=\overline{H^{p,q}}$, the bar meaning complex conjugation on $H$.
A Hodge structure of weight $k$ gives rise to the so-called Hodge filtration, which is a descending filtration
$F^{p}=\bigoplus_{s\ge p}H^{s,k-s}$. We define $\Gr^{p}_{F}(H):=F^{p}/ F^{p+1}=H^{p,k-p}$.

A mixed Hodge structure consists of a finite dimensional complex vector space $H$ with a real structure,
an ascending (weight) filtration $\ldots \subset W_{k-1}\subset W_k \subset \ldots \subset H$
(defined over $\RR$) and a descending (Hodge) filtration $F$ such that $F$ induces a pure Hodge structure
of weight $k$ on each $\Gr^{W}_{k}(H)=W_{k}/W_{k-1}$. We define
 $$
 H^{p,q}:= \Gr^{p}_{F}\Gr^{W}_{p+q}(H)
 $$
and write $h^{p,q}$ for the {\em Hodge number} $h^{p,q} :=\dim H^{p,q}$.

Let $Z$ be any quasi-projective algebraic variety (maybe non-smooth or non-compact). 
The cohomology groups $H^k(Z)$ and the cohomology groups with compact support  
$H^k_c(Z)$ are endowed with mixed Hodge structures \cite{Deligne2,Deligne3}. 
We define the {\em Hodge numbers} of $Z$ by
 $$
  h^{k,p,q}_{c}(Z) = h^{p,q}(H_{c}^k(Z))=\dim \Gr^{p}_{F}\Gr^{W}_{p+q}H^{k}_{c}(Z) .
 $$
The Hodge-Deligne polynomial, or E-polynomial is defined as 
 $$
 e(Z)=e(Z)(u,v):=\sum _{p,q,k} (-1)^{k}h^{k,p,q}_{c}(Z) u^{p}v^{q}.
 $$

When $h_c^{k,p,q}=0$ for $p\neq q$, the polynomial $e(Z)$ depends only on the product $uv$.
This will happen in all the cases that we shall investigate here. In this situation, it is
conventional to use the variable $q=uv$. If this happens, we say that the variety is {\it of balanced type}.
For instance, $e(\CC^n)=q^n$.

The key property of Hodge-Deligne polynomials that permits their calculation is that they are additive for
stratifications of $Z$. If $Z$ is a complex algebraic variety and
$Z=\bigsqcup_{i=1}^{n}Z_{i}$, where all $Z_i$ are locally closed in $Z$, then $e(Z)=\sum_{i=1}^{n}e(Z_{i})$.

There is another useful property that we shall use often: if there is an action of $\ZZ_2$ on $X$, then we have
polynomials $e(X)^+, e(X)^-$, which are the E-polynomials of the invariant and anti-invariant parts of the cohomology
of $X$, respectively. More concretely, $e(X)^+=e(X/\ZZ_2)$ and $e(X)^-=e(X)-e(X)^+$. Then if $\ZZ_2$ acts
on $X$ and on $Y$, we have the equality (see \cite[Proposition 2.6]{lomune}) 
 \begin{equation}\label{eqn:+-}
e((X\x Y)/\ZZ_2) =e(X)^+e(Y)^+ +e(X)^-e(Y)^-\, .
  \end{equation}

\subsection{Fibrations}
We have to deal with fibrations
 \begin{equation}\label{fibration}
  F \longrightarrow Z \overset{\pi}{\longrightarrow} B 
 \end{equation}
that are locally trivial in the analytic topology. We want to compute the E-polynomial of the total space in terms 
of the polynomials of the base and the fiber. The fibration defines a local system $\mathcal{H}^{k}_{c}$, 
whose fibers are the cohomology groups $H^{k}_{c}(F_{b})$, where $b\in B$, $F_{b}=\pi^{-1}(b)$. 
The fibers possess mixed Hodge structures, and the subspaces $W_{t}(H^{k}_{c}(F_{b}))$ are preserved 
by the holonomy. We will assume from now on that $F$ is of balanced type, so $\Gr_{2p}^W H^{k}_{c}(F_{b})=
H^{k,p,p}_{c}(F_{b})$. Associated to the fibration, there is a monodromy representation:
 \begin{equation}\label{eqn:exxtra}
 \rho : \pi_{1}(B) \longrightarrow  \GL(H^{k,p,p}_c(F)).
 \end{equation}
Suppose that the  monodromy group $\Gamma=\im(\rho)$ is an abelian and finite group. 
Then $H_c^{k,p,p}(F)$ are modules over the representation ring $R(\Gamma)$. So there is a well
defined element, the  {\em Hodge monodromy representation},
  \begin{equation}\label{eqn:Hodge-mon-rep}
  R(Z) := \sum (-1)^k H_c^{k,p,p}(F)\,  q^p \in R(\Gamma)[q] \, .
  \end{equation}

As the monodromy representation (\ref{eqn:exxtra}) has finite image, there is a finite covering
$B_\rho \to B$ such that the pull-back fibration
 \begin{equation}\label{eqn:fibration-bis}
 \xymatrix{Z' \ar[r] \ar[d]_{\pi'} & Z \ar[d]_\pi \\
 B_\rho \ar[r] & B}
 \end{equation}
has trivial monodromy. 

Let $S_1,\ldots, S_N$ be the
irreducible representations of $\Gamma$ (there are $N=\# \Gamma$ of them, and all
of them are one-dimensional). These are generators of $R(\Gamma)$ as a free abelian group. We
write the Hodge monodromy representation of (\ref{eqn:Hodge-mon-rep}) as
  $$
 R(Z)= a_1(q) S_1 + \ldots a_N(q) S_N.
 $$

\begin{thm} \label{thm:general-fibr}
 Suppose that $B_\rho$ is of balanced type. Then $Z$ is of balanced type. Moreover, there are 
 polynomials $s_1(q),\ldots, s_N(q)\in \ZZ[q]$ (only dependent on $B,B_\rho$ and $\Gamma$, but not on the
fibration or the fiber) such that 
 $$
 e(Z)= a_1(q) s_1(q) + \ldots +a_N(q) s_N(q),
 $$
for $R(Z)= a_1(q) S_1 + \ldots a_N(q) S_N$.
\end{thm}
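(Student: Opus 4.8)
The plan is to reduce everything to the trivialised fibration over the Galois cover $B_\rho\to B$ and then to descend using the representation theory of the finite abelian group $\Gamma$. Recall that $B_\rho\to B$ is the covering associated to $\ker\rho\subset\pi_1(B)$, so it is Galois with group $\Gamma=\im(\rho)$, and the pulled-back fibration $\pi':Z'\too B_\rho$ of (\ref{eqn:fibration-bis}) has trivial monodromy on every Hodge piece $H^{k,p,p}_c(F)$. The deck group $\Gamma$ acts on $Z'=Z\x_B B_\rho$ through the second factor, with $Z=Z'/\Gamma$, so that $e(Z)$ equals the E-polynomial of the $\Gamma$-invariant part of $H^*_c(Z')$. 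For each irreducible (one-dimensional) character $S_i$ of $\Gamma$ the composite $\pi_1(B)\to\Gamma\overset{S_i}{\to}\CC^*$ defines a rank-one local system $\mathcal{L}_i$ on $B$ of finite order; I would take $s_i(q)$ to be the E-polynomial of $B$ with coefficients in $\mathcal{L}_i$, equivalently the E-polynomial of the $S_i$-isotypic summand of $H^*_c(B_\rho)$. These manifestly depend only on $B$, $B_\rho$ and $\Gamma$, not on the fibration or the fibre.

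For the formula itself I would run the Leray spectral sequence of $\pi:Z\too B$,
\[
E_2^{r,k}=H^r_c(B,\cH^k_c)\Longrightarrow H^{r+k}_c(Z),
\]
which is a spectral sequence of mixed Hodge structures, so that the alternating sum of Hodge numbers is preserved through the pages and $e(Z)=\sum_{r,k}(-1)^{r+k}e\big(H^r_c(B,\cH^k_c)\big)$. Since $F$ is of balanced type, $\cH^k_c=\bigoplus_p\cH^{k,p,p}_c$, and each $\cH^{k,p,p}_c$ is a finite-monodromy local system whose monodromy, decomposed into irreducibles of $\Gamma$, reads $\cH^{k,p,p}_c\cong\bigoplus_i\mathcal{L}_i^{\oplus m^{k,p}_i}$ with $m^{k,p}_i=\dim\Hom_\Gamma(S_i,H^{k,p,p}_c(F))$. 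By definition of the Hodge monodromy representation this records exactly $a_i(q)=\sum_{k,p}(-1)^k m^{k,p}_i q^p$. The fibre contributes a pure Tate piece of type $(p,p)$, so $H^r_c(B,\cH^{k,p,p}_c)$ is the $(p,p)$-Tate twist of $\bigoplus_i H^r_c(B,\mathcal{L}_i)^{\oplus m^{k,p}_i}$, and its E-polynomial is $q^p\sum_i m^{k,p}_i\, e(H^r_c(B,\mathcal{L}_i))$. Substituting and collecting terms gives
\[
e(Z)=\sum_{k,p}(-1)^k q^p\sum_i m^{k,p}_i\, s_i(q)=\sum_i a_i(q)\, s_i(q),
\]
which is the desired identity.

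It remains to see that $Z$ is of balanced type and that each $s_i$ is a genuine polynomial in $q$. For the Galois cover one has a $\Gamma$-equivariant isomorphism of mixed Hodge structures $H^*_c(B_\rho)\cong\bigoplus_i H^*_c(B,\mathcal{L}_i)$, with $\Gamma$ acting on the $i$-th summand through $S_i$; hence each $H^*_c(B,\mathcal{L}_i)$ is a sub-MHS of $H^*_c(B_\rho)$. If $B_\rho$ is of balanced type then, Hodge numbers being non-negative, every summand is balanced as well, so each $s_i(q)\in\ZZ[q]$ is well defined; the displayed formula then exhibits $e(Z)$ as a polynomial in $q=uv$, proving that $Z$ is of balanced type. (When $\Gamma$ is trivial this recovers $e(Z)=e(F)\,e(B)$, as it must.)

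The main obstacle is the Hodge-theoretic bookkeeping behind the spectral-sequence step: the fibration is only locally trivial in the analytic topology and $B$ may be singular and non-compact, so one must justify that Leray is a spectral sequence of mixed Hodge structures and, more delicately, that the finite-order twist $H^r_c(B,\cH^{k,p,p}_c)$ really is the $(p,p)$-Tate twist of $\bigoplus_i H^r_c(B,\mathcal{L}_i)^{m^{k,p}_i}$ rather than some other extension. I would sidestep these functoriality issues by working on $B_\rho$, where the monodromy is trivial: there the multiplicativity $e(Z')=e(F)\,e(B_\rho)$ follows by the method already used for one-dimensional bases in \cite{lomune}, upgraded to a $\Gamma$-equivariant statement, and the invariant-part identity $e(Z)=e(Z')^{\Gamma}$ together with the character decomposition of $H^*_c(F)\otimes H^*_c(B_\rho)$ into isotypic pieces yields the same formula. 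Verifying the compatibility of the Tate twist with this descent is the one genuinely technical point.
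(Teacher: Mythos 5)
Your proposal follows essentially the same route as the paper's proof: both run the compactly supported Leray spectral sequence of $\pi:Z\to B$, invoke its compatibility with mixed Hodge structures (Arapura) so that $e(Z)$ can be read off the $E_2$-page, decompose the local systems $\cH^{k,p,p}_c$ into $\Gamma$-isotypic pieces, and take $s_i(q)=e(H^*_c(B,S_i))$ with the mixed Hodge structure on $H^*_c(B,S_i)$ induced from the cover $B_\rho$ (the paper phrases this via the surjection $H^l_c(B_\rho)\twoheadrightarrow H^l_c(B,S_i)$, you via the isotypic decomposition $H^*_c(B_\rho)\cong\bigoplus_i H^*_c(B,\mathcal{L}_i)$; these are the same statement). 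Your closing ``sidestep'' --- working $\Gamma$-equivariantly over $B_\rho$ and taking invariants --- is in fact the mechanism the paper uses, through the surjection $E_2^{l,m}(Z')=H^l_c(B_\rho)\otimes H^m_c(F)\to E_2^{l,m}(Z)$; and the delicate Tate-twist point you flag is treated at a comparable level of informality in the paper itself.

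One inference in your write-up is, however, wrong as stated: you conclude that $Z$ is of balanced type because ``the displayed formula exhibits $e(Z)$ as a polynomial in $q=uv$''. Balanced type is the condition $h^{k,p,q}_c=0$ for $p\neq q$ on the Hodge numbers themselves; this is strictly stronger than the E-polynomial depending only on $uv$, since the alternating sum over $k$ can cancel off-diagonal contributions. The distinction matters here, because it is the balancedness of $Z$ (not of $e(Z)$) that allows the theorem to be applied again with $Z$ as the fiber of a further fibration, which is exactly how it is used in the paper (e.g.\ over two-dimensional bases). The repair is one step away from what you already wrote: your Tate-twist claim makes each $E_2$-term $H^r_c(B,\cH^{k,p,p}_c)$ a Tate twist of a direct summand of $H^*_c(B_\rho)$, hence balanced; every later page and $E_\infty$ is a subquotient, and the associated graded of $H^*_c(Z)$ for the Leray filtration is $E_\infty$, so all off-diagonal Hodge numbers of $H^*_c(Z)$ vanish. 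Alternatively one can argue as the paper does: the transfer map $E_2^{l,m}(Z')\to E_2^{l,m}(Z)$ is a surjection of mixed Hodge structures, and a quotient of a balanced mixed Hodge structure is balanced.
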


\begin{proof}
 The Leray spectral sequence of the fibration (\ref{fibration}) has $E_2$-term
  \begin{equation}\label{eqn:E2-term}
    E_2^{l,m}(Z)=H^l_c(B,H^m_c(F))
  \end{equation}
 and abuts to $H^k_c(Z)$. By \cite{a}, $E_j^{l,m}(Z)$ has a mixed Hodge structure for
$j\geq 2$, and the differentials $d_j$ are compatible with the mixed Hodge structure.
Therefore $e(Z)=e(H^*_c(Z))=e(E_2^{*,*}(Z))$.

The mixed Hodge structure associated to $\pi'$ in (\ref{eqn:fibration-bis}) is
the product mixed Hodge structure $E_2^{l,m}(Z')=H^l_c(B_\rho)\otimes H^m_c(F)$. By our
assumption, $B_\rho$ and $F$ are of balanced type, so $Z'$ is of balanced type. There is
a map $E_2^{l,m}(Z') \to E_2^{l,m}(Z)$, which preserves the mixed Hodge structures.
This map is surjective, so $Z$ is of balanced type. 

By definition, 
 $$ 
 R(Z) = \sum (-1)^m H_c^{m,p,p}(F)\,  q^p = a_1(q) S_1 + \ldots + a_N(q) S_N\, .
 $$
The local systems $S_i \to B$ are $1$-dimensional 
and have a mixed Hodge structure. When we pull-back $S_i \to B$ to 
$B_\rho$, we get trivial local systems. Hence $H^l_c(B_\rho) \twoheadrightarrow H^l_c(B,S_i)$
and it has the induced mixed Hodge structure. So
$e(H^*_c(B,H^*_c(F)))= e(H^*_c(B, R(Z)))=\sum a_i(q) e(H^*_c(B,S_i))$.
We define $s_i(q)=e(H^*_c(B,S_i))$ to have the statement.
\end{proof}

Write $e(S_i)=s_i(q)$, $1\leq i\leq N$. So there is a $\ZZ[q]$-linear map
 $$
 e: R(\Gamma)[q] \to \ZZ[q]
 $$
satisfying the property that $e(R(Z))=e(Z)$.

\medskip

A trivial application of Theorem \ref{thm:general-fibr} happens when 
$\pi:Z\to B$ is a fibre bundle with fibre $F$ such that the action of $\pi_1(B)$ on $H_c^*(F)$ is trivial.
Then $R(Z)=e(F) T$, where $T$ is the trivial local system and
$e(Z)=e(F) e(B)$ (this result appears in \cite[Proposition 2.4]{lomune}).

The hypothesis that the action of $\pi_1(B)$ on $H_c^*(F)$ is trivial holds in particular in the following cases:
\begin{itemize}
\item $B$ is irreducible and $\pi$ is locally trivial in the Zariski topology. 
\item $\pi$ is a principal $G$-bundle with $G$ a connected algebraic group. 
\item $Z$ is a $G$-space with isotropy $H<G$ such that $G/H\to Z\to B$ is a fiber bundle, 
and $G$ is a connected algebraic group.
\end{itemize}

We can also recover easily the result in \cite[Proposition 2.10]{lomune}, which is the main
tool used in \cite{lomune}.

\begin{cor}\label{cor:general-fibr}
Let $B=\CC-\{q_1,\ldots, q_\ell\}$. Suppose that $B_\rho$ is a rational curve. Then 
 $e(Z) =(q-1)\, e(F)^{inv} - (\ell-1) e(F)$, where $e(F)^{inv}$ denotes the E-polynomial of
the invariant part of the cohomology of $F$.
\end{cor}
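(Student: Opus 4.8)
The plan is to derive Corollary \ref{cor:general-fibr} as a direct specialization of Theorem \ref{thm:general-fibr} to the base $B=\CC-\{q_1,\ldots,q_\ell\}$. First I would identify the monodromy group $\Gamma$ and its irreducible representations. Since $B_\rho\to B$ is a finite abelian cover with $B_\rho$ a rational curve, and $B$ is an $\ell$-punctured affine line, the fundamental group $\pi_1(B)$ is free on $\ell$ generators (the loops around the punctures) and $\Gamma=\im(\rho)$ is a finite abelian quotient. The key simplification to aim for is that we do not need the full list of irreducible representations $S_1,\ldots,S_N$: because $\Gamma$ is abelian, each $S_i$ is one-dimensional, and the Hodge monodromy representation $R(Z)$ decomposes into the trivial summand $T$ (the invariant part) and the remaining non-trivial summands. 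I would write $R(Z)=e(F)^{inv}\,T + (\text{non-invariant part})$, so that applying the $\ZZ[q]$-linear map $e$ gives $e(Z)=e(F)^{inv}\,s_T(q) + (\text{contribution of non-trivial }S_i)$.

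Next I would compute the two relevant values $s_i(q)=e(H^*_c(B,S_i))$. For the trivial local system $T$, we have $s_T(q)=e(B)=e(\CC-\{q_1,\ldots,q_\ell\})=q-\ell$. For a non-trivial one-dimensional local system $S$ on $B$, I would compute $e(H^*_c(B,S))$ directly: since $S$ is non-trivial, $H^0_c(B,S)=0$ and $H^2_c(B,S)=0$ (as $B$ is affine of dimension $1$ and the system is non-constant), leaving only $H^1_c(B,S)$. Its dimension is controlled by the Euler characteristic, $\dim H^1_c(B,S)=-\chi_c(B)=\ell-1$, and the weight/Hodge type is that of $H^1_c$ of the punctured line, contributing a factor that evaluates to $-(\ell-1)$ under $e$ (the class is pure of the appropriate weight so that no $q$-dependence survives — this matches the $g=1,2$ computations in \cite{lomune}). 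The crucial point is that this value $s_i(q)=-(\ell-1)$ is \emph{the same} for every non-trivial $S_i$, which is exactly what the ``only dependent on $B,B_\rho$ and $\Gamma$'' clause of Theorem \ref{thm:general-fibr} guarantees.

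Assembling these, I would write the non-invariant part of $R(Z)$ as $e(F)-e(F)^{inv}$ times the common coefficient. Since the total dimension count gives $\sum_i a_i(q) = e(F)$ (evaluating at the regular representation / trivial cover) and $a_T(q)=e(F)^{inv}$, the sum of the non-trivial coefficients is $e(F)-e(F)^{inv}$, and each is multiplied by $-(\ell-1)$. Hence
\[
 e(Z) = e(F)^{inv}(q-\ell) - (\ell-1)\bigl(e(F)-e(F)^{inv}\bigr) = (q-1)\,e(F)^{inv} - (\ell-1)\,e(F),
\]
where the algebraic simplification $e(F)^{inv}(q-\ell) + (\ell-1)e(F)^{inv} = e(F)^{inv}(q-1)$ gives the stated form.

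The main obstacle I expect is the clean determination of the value $s_i(q)=-(\ell-1)$ for a non-trivial one-dimensional local system — specifically, verifying that $H^0_c$ and $H^2_c$ vanish and that $H^1_c(B,S)$ contributes with the correct sign and carries no genuine Hodge-theoretic $q$-weight (so that it evaluates to the integer $-(\ell-1)$ rather than some polynomial in $q$). This requires invoking that the finite cover $B_\rho$ is rational to control the mixed Hodge structure via the surjection $H^1_c(B_\rho)\twoheadrightarrow H^1_c(B,S)$ established in the proof of Theorem \ref{thm:general-fibr}, and then reading off that the relevant piece of $H^1_c$ of a rational curve is of Tate type $(0,0)$. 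Everything else is bookkeeping: splitting $R(Z)$ into invariant and non-invariant parts and applying the linearity of $e$.
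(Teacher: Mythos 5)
Your proof is correct, and it reaches the same two ingredients the paper uses for the assembly step (namely $e(T)=e(B)=q-\ell$, the splitting of $R(Z)$ into its trivial summand with coefficient $e(F)^{inv}$ and the non-trivial rest with coefficient sum $e(F)-e(F)^{inv}$, and the algebra $(q-\ell)e(F)^{inv}-(\ell-1)(e(F)-e(F)^{inv})=(q-1)e(F)^{inv}-(\ell-1)e(F)$), but your computation of the crucial values $s_i=e(H^*_c(B,S_i))=-(\ell-1)$ for non-trivial $S_i$ goes by a genuinely different route. You compute each $s_i$ intrinsically: $H^0_c(B,S_i)=0$ since $B$ is connected and non-compact, $H^2_c(B,S_i)\cong H^0(B,S_i^\vee)^*=0$ by Poincar\'e duality and non-triviality of $S_i$ (this duality argument, rather than affineness of $B$ per se, is the correct justification for the $H^2_c$ vanishing), $\dim H^1_c(B,S_i)=-\chi_c(B)=\ell-1$, and the Hodge type is $(0,0)$ because $H^1_c(B,S_i)$ is a quotient (indeed a direct summand, via $\pi_*\CC_{B_\rho}=\bigoplus_j S_j$) of $H^1_c(B_\rho)$, which is pure of type $(0,0)$ precisely because $B_\rho$ is a punctured rational curve; invoking that surjection is legitimate, as it is established inside the proof of Theorem \ref{thm:general-fibr}. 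The paper instead argues indirectly: for each non-trivial $S$ it passes to the cyclic cover $\tilde B\to B$ determined by $\im(S)$, observes $\tilde B$ is rational (being covered by $B_\rho$), extends $\tilde B\to B$ to a ramified cover $\varphi:\PP^1\to\PP^1$, computes $e(\tilde B)=(q-\ell)-(e-1)(\ell-1)$ via the Hurwitz formula, and then applies Theorem \ref{thm:general-fibr} to the finite fibration $\tilde B\to B$ to obtain the relation $\sum_{p=2}^{e} s_{i_p}=-(e-1)(\ell-1)$, from which the individual values are extracted. Your route buys something concrete: it pins down each $s_i$ separately in one stroke, whereas the paper's covering argument directly yields only sums over the non-trivial characters factoring through each cyclic quotient, and isolating individual values when the monodromy contains elements of order greater than $2$ requires a further step (an induction on the order, or exactly the direct Hodge-theoretic computation you perform). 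Conversely, the paper's argument stays entirely within the formalism of its own theorem applied to auxiliary covers, using only the topology of ramified covers of $\PP^1$ and no sheaf-cohomological duality input.
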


\begin{proof}
 Let $\Gamma$ be the monodromy, $N=\# \Gamma$, and $S_1=T, S_2,\ldots, S_N$
the irreducible representations, where $T$ denotes the trivial representation.
We only need to see that $e(T)=q-\ell$ and  $e(S_i) = - (\ell-1)$ for $i\geq 2$.
Given that, if $R(Z)=a T+ \sum_{i\geq 2} a_i S_i$, then $e(F)^{inv}=a$, $e(F)=a+\sum a_i$,
and $e(Z)=(q-\ell) a - (\ell-1) \sum a_i=(q-1)a -(\ell-1)(a+\sum a_i)$, as required.

For the trivial representation, it is clear that $e(T)=e(B)=q-\ell$. 

Now let $S$ be a (one-dimensional) irreducible representation of
$\Gamma$. 
Let $\tilde\Gamma$ be the image of $S:\Gamma\to \CC^*$, and let 
$e=\#\tilde\Gamma$. Take the $e$-cover asociated to this group, $\tilde B \to B$. Then 
$\tilde\Gamma$ acts on $\tilde B$ with quotient $B$. Clearly, $\tilde B$ is a rational curve
(the quotient $\Gamma \to \tilde\Gamma$ produces a covering map $B_\rho \to \tilde B$,
and $B_\rho$ is a rational curve by assumption). Then we have a fibration
$Y \to \tilde B\to B$, where $Y$ is a finite set of $e$ points. Clearly,
$R(\tilde B)= T+ \sum_{p=2}^e S_{i_p}$, for some $i_p\in \{2,\ldots,N\}$, where $S_{i_2}=S$.

The covering $\tilde B\to B$ can be extended to a ramified covering
$\varphi:\PP^1 \to \PP^1$. 
Hurwitz formula then says that $-2=e\, (-2) +r$, where $r$ is the degree of the
ramification divisor. Then $\varphi^{-1}(q_1,\ldots,q_\ell, \infty)$ has
$e(\ell+1) -r$ points. Therefore $\tilde B=\PP^1-\varphi^{-1}(q_1,\ldots,q_\ell, \infty)$
has 
 $$
 e(\tilde B)=q+1-e(\ell+1)+r=q+1-e(\ell+1) + (2e-2)=(q-\ell) - (e-1)(\ell-1).
 $$ 
The formula in Theorem \ref{thm:general-fibr} says that
$e(\tilde B)=(q-\ell) + \sum_{p=2}^e s_{i_p}$. Therefore
$\sum_{p=2}^e s_{i_p}=- (e-1)(\ell-1)$.

This happens for all choices of coverings associated to all representations $S_2,\ldots, S_N$.
Hence $s_i=-(\ell-1)$, for all $i=2,\ldots,N$.
\end{proof}

Another case that we are going to use later is the following.

\begin{cor} \label{cor:thm1} 
Let (\ref{fibration}) be a fibration over $B=\mathbb{C}^{\ast}\times \mathbb{C}^{\ast}$ with finite monodromy, 
such that $F$ is of balanced type. Then $Z$ is also of balanced type and its E-polynomial is given by
 $$
 e(Z)=(q-1)^2 e(F)^{inv}.
 $$
\end{cor}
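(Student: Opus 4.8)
The plan is to specialize Theorem~\ref{thm:general-fibr} to the base $B=\CC^*\times\CC^*$. The only data I need to pin down are the universal polynomials $s_i(q)=e(H^*_c(B,S_i))$ attached to the irreducible representations $S_1=T,S_2,\ldots,S_N$ of the finite abelian monodromy group $\Gamma$, because once these are known Theorem~\ref{thm:general-fibr} gives $e(Z)=\sum_i a_i(q)s_i(q)$ for $R(Z)=\sum_i a_i(q)S_i$. So the first step is to compute $s_1(q)=e(T)=e(B)=(q-1)^2$, which is immediate since $e(\CC^*)=q-1$ and $e$ is multiplicative over products with trivial monodromy. The substance of the argument is showing that $s_i(q)=0$ for every nontrivial $S_i$; granting that, if $R(Z)=e(F)^{inv}\,T+\sum_{i\ge 2}a_i(q)S_i$ (the trivial part of $R(Z)$ being exactly the invariant cohomology), then $e(Z)=(q-1)^2\,e(F)^{inv}$, as claimed.

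To compute $s_i(q)=e(H^*_c(B,S_i))$ for a nontrivial character $S_i:\Gamma\to\CC^*$, I would mimic the covering argument used in the proof of Corollary~\ref{cor:general-fibr}. Let $\tilde\Gamma=\im(S_i)\subset\CC^*$ be cyclic of order $e$, and let $\tilde B\to B$ be the associated $\tilde\Gamma$-Galois cover, so that $H^*_c(\tilde B)$ decomposes as $T\oplus\bigoplus$ of the characters of $\tilde\Gamma$ pulled back to $\Gamma$, each nontrivial character appearing once. Then $e(\tilde B)=(q-1)^2+\sum_{j=1}^{e-1}s_{i_j}(q)$ by Theorem~\ref{thm:general-fibr}. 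The point is that $\tilde B$ is again a product of two punctured lines, or more precisely a connected abelian cover of $(\CC^*)^2$; an order-$e$ character of $\pi_1(B)=\ZZ^2$ factors through one $\ZZ$-factor, so $\tilde B\cong (\CC^*)\times\CC^*$ via the $e$-th power map on the relevant coordinate, giving $e(\tilde B)=(q-1)^2$ as well. Hence $\sum_{j=1}^{e-1}s_{i_j}(q)=0$, and running this over all nontrivial characters forces each $s_i(q)=0$.

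The main obstacle is the covering computation: I must argue carefully that an order-$e$ cover of $(\CC^*)^2$ cut out by a primitive character of $\ZZ^2$ is again isomorphic to $(\CC^*)^2$ (so its E-polynomial is unchanged), in contrast to the $1$-dimensional case of Corollary~\ref{cor:general-fibr} where the Hurwitz correction term $-(e-1)(\ell-1)$ appears precisely because a punctured line does not lift to a punctured line of the same shape. Here the analogue of the ramification contribution vanishes because $\CC^*$ has Euler characteristic zero: the cover $\CC^*\to\CC^*$, $z\mapsto z^e$, is unramified and already a copy of $\CC^*$, so no boundary punctures are created. This is the conceptual reason the formula collapses to the clean $(q-1)^2\,e(F)^{inv}$ with no contribution from the nontrivial isotypic pieces, and it is the step I would write out in full detail.
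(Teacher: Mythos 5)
Your proposal has the same skeleton as the paper's proof: specialize Theorem \ref{thm:general-fibr}, observe that $e(T)=e(B)=(q-1)^2$, and kill the nontrivial characters by passing to the cyclic cover $\tilde B\to B$ attached to each $S_i$, using $R(\tilde B)=T+\sum_{p=2}^{e}S_{i_p}$ together with $e(\tilde B)=(q-1)^2$ to get $\sum_{p=2}^{e}s_{i_p}=0$; even your concluding deduction (``running over all nontrivial characters forces each $s_i=0$'') is essentially the paper's. The one step you treat genuinely differently is the identity $e(\tilde B)=(q-1)^2$. The paper obtains it by squeezing: there are coverings $\CC^*\times\CC^*\to B_\rho\to\tilde B\to B=\CC^*\times\CC^*$, the induced maps on $H^*_c$ are surjections, and the composite is an isomorphism, which forces $e(\tilde B)=(q-1)^2$ (and simultaneously shows $\tilde B$ and $B_\rho$ are of balanced type). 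You instead argue that $\tilde B\cong\CC^*\times\CC^*$ as a variety. That is true, but your justification --- ``an order-$e$ character of $\ZZ^2$ factors through one $\ZZ$-factor'' --- is false for the standard factors (consider $(a,b)\mapsto\zeta^{a+b}$); the correct statement, via Smith normal form, is that the kernel of a surjection $\ZZ^2\to\ZZ_e$ equals $\ZZ u\oplus e\ZZ v$ for a suitable basis $(u,v)$ of $\ZZ^2$, and this change of basis is realized by a monomial automorphism of $(\CC^*)^2$, after which the cover becomes $(x,y)\mapsto(x,y^e)$. You flag this as the step to write out in full; once that is done, your route is correct and arguably cleaner than the paper's transfer argument, for exactly the conceptual reason you give (no ramification correction, unlike Corollary \ref{cor:general-fibr}).

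There is, however, a genuine omission: the balanced-type statements. Theorem \ref{thm:general-fibr} carries the hypothesis that $B_\rho$ is of balanced type, and the corollary you are proving asserts that $Z$ is of balanced type; your proposal verifies neither, whereas the paper's first paragraph is devoted to exactly this (choosing $n$ with $\ZZ_n\times\ZZ_n\surj\Gamma$, so that $\CC^*\times\CC^*\to B_\rho\to\CC^*\times\CC^*$, whence $B_\rho$ is balanced). Your own lattice argument closes this gap at no cost --- any finite-index subgroup of $\ZZ^2$ is $\ZZ(d_1u)\oplus\ZZ(d_2v)$ in a suitable basis, so the connected cover $B_\rho$ of $(\CC^*)^2$ is again isomorphic to $(\CC^*)^2$, hence balanced --- but it must be said. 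A final remark, which applies equally to the paper's own proof: the last step pins down each $s_i$ individually only when every nontrivial character of $\Gamma$ has order $2$ (as happens in all applications in the paper); for a character of order $e>2$ the relations obtained determine only sums of the $s_{i_p}$ over generators of cyclic subgroups of the character group. A clean way to get each $s_i=0$ outright is K\"unneth: after your monomial change of coordinates the local system $S_i$ is pulled back from a nontrivial rank-one system $L$ on one $\CC^*$ factor, and $H^*_c(\CC^*,L)=0$, so $H^*_c(B,S_i)=0$.
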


\begin{proof}
First note that $\pi_1(B)=\ZZ\x \ZZ$, so the monodromy $\Gamma$, being a quotient 
$\ZZ\x\ZZ\surj \Gamma$, must be abelian. Moreover, there is some $n>0$ such that $\ZZ_n\x \ZZ_n\surj \Gamma$,
and the covering associated to $\ZZ_n\x \ZZ_n$ is of balanced type (is $\CC^*\x \CC^*$ again). Hence 
$B_\rho$ is of balanced type, since there are coverings $\CC^*\x \CC^* \to B_\rho \to
\CC^*\x \CC^*$. Now let $S$ be an irreducible representation of $R(\Gamma)$. If $S=T$, the trivial
representation, then $e(T)=e(B)=(q-1)^2$. 

If $S$ is a non-trivial representation, then take the covering $\tilde B\to B$ associated to $S$. Again 
there are coverings $\CC^*\x \CC^* \to B_\rho \to \tilde B \to B=\CC^*\x \CC^*$. 
The Hodge monodromy representation associated to $\tilde B$ is $T+ \sum_{p=2}^e S_{i_p}$, where $S_{i_2}=S$.
In cohomology $H^*_c(\CC^*\x \CC^*)\to H^*_c(\tilde B) \to
H^*_c(B)$ are surjections, but the composition is an isomorphism (multiplication by $n$).
Therefore $e(R(\tilde B))=e(\tilde B)=(q-1)^2$, and $e(\sum_{p=2}^e S_{i_p})=0$. This happens for 
all choices of $S$, so it must be $e(S)=0$, for any irreducible non-trivial $S$.
\end{proof}

\subsection{Building blocks}
We need to recall some E-polynomials from \cite{lomune} which serve as building blocks
for the E-polynomials of character varieties for any genus $g\geq 2$. They are basically
associated to the case of $g=1$. First, 
we have that $e(\SL(2,\CC))=q^3-q$ and $e(\PGL(2,\CC))=q^3-q$.
Consider the following subsets of $\SL(2,\CC)$:
\begin{itemize}
\item $W_{0}:=$ conjugacy class of $\left(
             \begin{array}{cc}
               1 & 0 \\
               0 & 1
             \end{array}
           \right)$. It has $e(W_0)=1$.
\item $W_{1}:=$ conjugacy class of  $\left(
             \begin{array}{cc}
               -1 & 0 \\
               0 & -1
             \end{array}
           \right)$. It has $e(W_1)=1$.
\item $W_{2}:=$ conjugacy class of $J_+=\left(
              \begin{array}{cc}
                1 & 1 \\
                0 & 1
              \end{array}
            \right)$. It is $W_2 \cong \PGL(2,\CC)/U$, with $U=\left\{ \left(
              \begin{array}{cc}
                1 & y \\
                0 & 1
              \end{array} \right) \,|\, y\in\CC \right\}$. It has $e(W_2)=q^2-1$.
\item $W_{3}:=$ conjugacy class of $J_-=\left(
              \begin{array}{cc}
                -1 & 1 \\
                0 & -1
              \end{array}
            \right)$. It is $W_3\cong \PGL(2,\CC)/U$ and  $e(W_3)=q^2-1$.
\item $W_{4,\lambda}:=$ conjugacy class of
  $\xi_\lambda=\left(
              \begin{array}{cc}
                \lambda & 0 \\
                0 & \lambda^{-1}
              \end{array}
            \right)$, where $\lambda\in\CC-\{0,\pm1\}$. Note that $W_{4,\lambda}=W_{4,\lambda^{-1}}$,
since the matrices $\xi_\lambda$ and $\xi_{\lambda^{-1}}$ are conjugated.
We have $W_{4,\lambda} \cong \PGL(2,\CC)/D$, where $D=\left\{ \left(
              \begin{array}{cc}
                x & 0 \\
                0 & x^{-1}
              \end{array} \right) \,|\, x\in\CC^* \right\}$. So $e(W_{4,\lambda})=q^2+q$.
\item We also need the set $W_{4}:=\{A\in\SL(2,\CC)\, | \, \Tr(A)\ne\pm2\}$,
which is the union of the conjugacy classes $W_{4,\lambda}$, $\lambda \in \CC- \{0,\pm 1\}$.
This has $e(W_4)= q^3-2q^2-q$.
\end{itemize}

Now consider the map
\begin{eqnarray*}
f:\SL(2,\CC)^{2}&\longrightarrow & \SL(2,\CC)\\
(A,B)&\mapsto& [A,B]=ABA^{-1}B^{-1}.
\end{eqnarray*}
Note that $f$ is equivariant under the action of $\SL(2,\CC)$ by conjugation on both spaces. 
We stratify $X=\SL(2,\CC)^{2}$ as follows:
\begin{itemize}
\item $X_{0}:=f^{-1}(W_{0})$,
\item $X_{1}:=f^{-1}(W_{1})$,
\item $X_{2}:=f^{-1}(W_{2})$,
\item $X_{3}:=f^{-1}(W_{3})$,
\item $X_{4}:=f^{-1}(W_{4})$.
\end{itemize}
We also introduce the varieties $f^{-1}(C)$ for fixed $C\in \SL(2,\CC)$ and define accordingly
 \begin{itemize}
\item $\overline{X}_{2}:=f^{-1}(J_+)$. There is a fibration $\PGL(2,\CC)/U \to X_2 \to \overline{X}_2$, so $e(X_2)=(q^2-1)e(\overline{X}_2)$.
\item $\overline{X}_{3}:=f^{-1}(J_-)$. There is a fibration $\PGL(2,\CC)/U \to X_3 \to \overline{X}_3$,
and $e(X_3)=(q^2-1)e(\overline{X}_3)$.
\item $\overline{X}_{4,\lambda}:=f^{-1}(\xi_\lambda)$, for $\lambda\ne 0,\pm1$.
We define also $X_{4,\lambda}=f^{-1}(W_{4,\lambda})$. There is a fibration 
$\PGL(2,\CC)/D \to X_{4,\lambda} \to \overline{X}_{4,\lambda}$,
and $e(X_{4,\lambda})=(q^2+q)e(\overline{X}_{4,\lambda})$.
\end{itemize}

It will also be convenient to define
\begin{itemize}
\item $\overline{X}_{4}:=\left\{(A,B,\lambda)\, | \, [A,B]=\left(\begin{array}{cc}
      \lambda & 0 \\
      0 & \lambda^{-1}\\
    \end{array}
  \right), \text{ for some } \lambda\ne0, \pm1,\, A,\,B \in \SL(2,\CC)\right\}$.
\end{itemize}
There is an action of $\ZZ_2$ on $\overline{X}_4$ given by interchanging 
$(A,B,\lambda)\mapsto (P^{-1}_0AP_0, P^{-1}_0BP_0,\lambda^{-1})$, with $P_0=
\left(\begin{array}{cc}
      0 & 1 \\
      1 & 0\\
    \end{array} \right)$. The set $\overline{X}_4/\ZZ_2$ coincides with the union
of all $\overline{X}_{4,\lambda}$.

The Hodge-Deligne polynomials computed in \cite{lomune} are as follows:
\begin{align*}
 e(X_{0}) &=  q^{4}+4q^{3}-q^2-4q, \\
 e(X_1) & =q^{3}-q ,\\
 e(\overline{X}_2) &= q^3-2q^2-3 q ,\\
 e(\overline{X}_3) & =q^3 +3 q^2 ,\\
 e(\overline{X}_{4,\lambda}) &=q^3 + 3 q^2 - 3 q -1 .\\
\end{align*}

There is a fibration
 $$
 \overline{X}_{4} \rightarrow B= \mathbb{C}-  \lbrace 0,\pm 1 \rbrace.
 $$
Let $\gamma_{0},\gamma_{-1},\gamma_{1}$ 
be loops around the points $0,-1,1$, respectively. The monodromies around 
$\gamma_{1},\gamma_{-1}$ are trivial, and the monodromy
around $\gamma_{0}$ is of order $2$. So the monodromy group is $\Gamma =\mathbb{Z}_{2}$ and the Hodge monodromy 
representation is computed in \cite[Theorem 6.1]{lomune},
 \begin{equation}\label{eqn:RX4}
 R(\overline{X}_{4})=(q^{3}-1)T+(3q^{2}-3q)N,
 \end{equation}
where $T$ is the trivial representation and $N$ the non-trivial one. Then Corollary \ref{cor:general-fibr} gives
 \begin{equation}\label{eqn:e(X4-bar)}
 e(\overline{X}_4) =(q-1)a -2b=  q^4-3q^3-6q^2+5q+3.
 \end{equation}
(where we have written $R(\overline{X}_{4})=aT+bN$, $a=q^3-1$, $b=3q^2-3q$). 

Taking the quotient by $\ZZ_2$, we have the fibration
 $$
  \overline{X}_{4} /\mathbb{Z}_{2} \rightarrow B= 
 (\mathbb{C}-  \lbrace 0, \pm 1 \rbrace )/\mathbb{Z}_{2} \cong \mathbb{C}-  \lbrace \pm 2 \rbrace,
 $$
and the Hodge monodromy representation is a representation of the group $\Gamma=\mathbb{Z}_{2}\times \mathbb{Z}_{2}$, generated by
the loops $\gamma_{-2},\gamma_2$ around the points $-2,2$, respectively. The Hodge monodromy representation is
computed in \cite[Theorem 7.1]{lomune},
 \begin{equation}\label{eqn:RX4Z2}
 R(\overline{X}_{4}/\mathbb{Z}_{2})=q^{3}T-3q S_{2}+3q^{2}S_{-2}-S_{0} ,
 \end{equation}
where $T$ is the trivial representation, $S_{\pm 2}$ is the representation that is non-trivial around the loop $\gamma_{\pm 2}$
and $S_{0}=S_{-2}\otimes S_2$. Now writing $R(\overline{X}_{4}/\mathbb{Z}_{2})=aT+bS_{2}+cS_{-2}+dS_{0}$, 
Corollary \ref{cor:general-fibr} says that
  \begin{equation} \label{formulaepolyx4z2}
  e(\overline{X}_{4}/\mathbb{Z}_{2})=(q-1)a-(a+b+c+d) = q^4-2q^3-3q^2+3q+1 .
  \end{equation}
Note that if $R(\overline{X}_{4}/\mathbb{Z}_{2})=aT+bS_{2}+cS_{-2}+dS_{0}$, then 
$R(\overline{X}_{4})=(a+d)T + (b+c)N$, which agrees with (\ref{eqn:RX4}).

To recover $e(X_4)$, we note that 
 $$
 X_4  \cong (\overline{X}_4 \x \PGL(2,\CC)/D) /\ZZ_2\, .
 $$
By \cite[Proposition 3.2]{lomune}, we have that $ e(\PGL(2, \CC)/D)^+=q^2$ and $ e(\PGL(2, \CC)/D)^-=q$. Using (\ref{eqn:+-}),
 \begin{align}\label{eqn:RX4-RX4Z2->eX4}
 e(X_4) &= q^2 e(\overline{X}_ 4)^+ + q \, e(\overline{X}_4)^- \nonumber \\
  &= q^2 e(\overline{X}_4/\ZZ_2) + q( e(\overline{X}_4)-e(\overline{X}_4/\ZZ_2))  \nonumber \\
 &= (q^2-q)e(\overline{X}_4/\ZZ_2) +q \, e(\overline{X}_{4}).
 \end{align}
Now using (\ref{formulaepolyx4z2}) and (\ref{eqn:e(X4-bar)}) we get
 $$
  e(X_4)  = q^6-2q^5-4q^4+3q^2+2q\, .
 $$

\section{E-polynomial of the twisted character variety}\label{sec:twisted}

We start by computing the E-polynomial of the twisted character variety $\mathcal{M}^1=\cM^1(\SL(2,\CC))$ for a curve $X$ of genus $3$. 
This space can be described as the quotient
 \begin{equation}\label{eqn:twisted}
 \mathcal{M}^1=W/\pgl,
 \end{equation}
where 
 $$
 W=\lbrace (A_1,B_1,A_2,B_2,A_3,B_3)\in \SL(2,\CC)^6 \mid [A_1,B_1][A_2,B_2][A_3,B_3]=-\text{Id} \rbrace.
 $$
Notice that we only need to consider a geometric quotient, since all representations are irreducible: if there was 
a common eigenvector $v$ for $(A_1,B_1,A_2,B_2,A_3,B_3)$, we would obtain that $[A_1,B_1](v)=[A_2,B_2](v)=[A_3,B_3](v)=v$ and therefore
$[A_1,B_1][A_2,B_2][A_3,B_3](v)=v\neq -\Id(v)=-v$.

To compute the E-polynomial of the moduli space, we will stratify the space in locally closed subvarieties and 
compute the E-polynomial of each stratum, since E-polynomials are additive with respect to stratifications. 
We stratify the space $W$ according to the possible values of the traces of the commutators. To simplify 
the notation, we write $[A_1,B_1]=\xi_{1}, [A_2,B_2]=\xi_{2}, [A_3,B_3]=\xi_{3}$. Consider the map
 \begin{eqnarray*}
 F: W & \longrightarrow & \mathbb{C}^{3} \\
 (A_1,B_1,A_2,B_2,A_3,B_3) & \mapsto & (t_{1},t_{2},t_{3})=(\tr \xi_1,\tr \xi_2,\tr \xi_3).
 \end{eqnarray*}

We are interested in the following condition: if $\xi_{1},\xi_{2},\xi_{3}$ share 
an eigenvector $v$ with eigenvalue $\lambda_{i}$ in each case, then in a suitable basis
 $$
 [A_1,B_1]=\begin{pmatrix} \lambda_{1} & \ast \\ 0 & \lambda_{1}^{-1} \end{pmatrix}, \quad 
 [A_2,B_2]=\begin{pmatrix} \lambda_{2} & \ast \\ 0 & \lambda_{2}^{-1} \end{pmatrix}, \quad
 [A_3,B_3]= \begin{pmatrix} \lambda_{3} & \ast \\ 0 & \lambda_{3}^{-1} \end{pmatrix},
 $$
and therefore, as $\xi_1\xi_2\xi_3=-\Id$, we obtain that $\lambda_{1}\lambda_{2}\lambda_{3}=-1$. Other
possibility is that $v$ has eigenvalue $\lambda_1$ for $\xi_1$, $\lambda_2$ for $\xi_2$ and
$\lambda_3^{-1}$ for $\xi_3$, yielding $\lambda_1\lambda_2\lambda_3^{-1}=-1$, etc.
Working out all possibilities, we have that $\xi_1,\xi_2,\xi_3$ can share an eigenvector when
 \begin{equation} \label{eqntracesbis}
 \lambda_{3}=-\lambda_{1}\lambda_{2},\: 
 \lambda_{3}= -\lambda_{1}^{-1}\lambda_{2},\:  
 \lambda_{3}=-\lambda_{1} \lambda_{2}^{-1},
 \text{ or } \lambda_3=-\lambda_{1}^{-1}\lambda_{2}^{-1}
 \end{equation}
Equivalently, in terms of the traces, when
 \begin{equation} \label{eqntraces}
 t_{1}^{2}+t_{2}^{2}+t_{3}^{2}+t_{1}t_{2}t_{3}=4. 
 \end{equation}
This defines a (smooth) quadric $C\subset \mathbb{C}^{3}$, depicted in Figure \ref{fig1}.
 \begin{center}
 \begin{figure}[h]
 \includegraphics[width=7 cm]{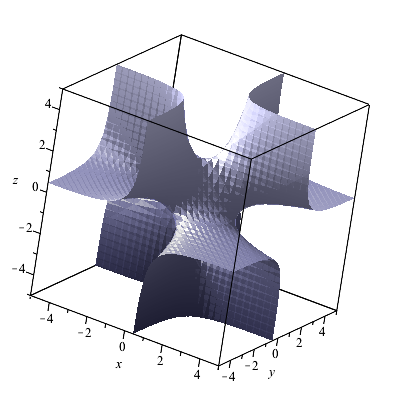}
 \caption{The quadric $C$.} \label{fig1}
 \end{figure}
\end{center}

\begin{lem} \label{lem:C}
$\xi_{1},\xi_{2},\xi_{3}$ share an eigenvector if and only if $(t_{1},t_{2},t_{3})\in C$.
\end{lem}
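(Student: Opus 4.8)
The plan is to reduce the three-matrix condition to a condition on only $\xi_1,\xi_2$, and then convert ``having a common eigenvector'' into the single trace equation cutting out $C$. First I would exploit the defining relation $\xi_1\xi_2\xi_3=-\Id$ to write $\xi_3=-(\xi_1\xi_2)^{-1}$. Then any vector $v$ that is a common eigenvector of $\xi_1$ and $\xi_2$ is automatically an eigenvector of $\xi_1\xi_2$, hence of $(\xi_1\xi_2)^{-1}$, hence of $\xi_3$; so $\xi_1,\xi_2,\xi_3$ share an eigenvector if and only if $\xi_1$ and $\xi_2$ do. Along the way I would record the identity $\tr(\xi_1\xi_2)=-\tr(\xi_3^{-1})=-\tr\xi_3=-t_3$, valid because $\tr M=\tr M^{-1}$ for $M\in\SL(2,\CC)$.

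The core step is the classical criterion that two matrices $A,B\in\SL(2,\CC)$ have a common eigenvector if and only if $\tr[A,B]=2$. The forward implication is immediate: a common eigenvector makes $A$ and $B$ simultaneously upper triangular, so $[A,B]$ is upper triangular with $1$'s on the diagonal and thus has trace $2$. For the converse I would put $\xi_1$ into normal form and compute $\tr[\xi_1,\xi_2]$ directly: when $\xi_1=\mathrm{diag}(\alpha,\alpha^{-1})$ with $\alpha\neq\alpha^{-1}$ and $\xi_2=\left(\begin{smallmatrix} a & b \\ c & d\end{smallmatrix}\right)$ one finds $\tr[\xi_1,\xi_2]=2-bc(\alpha-\alpha^{-1})^2$, while if $\xi_1$ is parabolic a similar computation gives $\tr[\xi_1,\xi_2]=2+c^2$. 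In each case the trace equals $2$ exactly when $\xi_2$ becomes triangular in the chosen basis, i.e.\ exactly when $\xi_2$ preserves an eigenline of $\xi_1$; the central cases $\xi_1=\pm\Id$ are trivial since then every eigenvector of $\xi_2$ works.

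Finally I would invoke the Fricke identity $\tr[A,B]=(\tr A)^2+(\tr B)^2+(\tr AB)^2-(\tr A)(\tr B)(\tr AB)-2$, which holds for \emph{all} $A,B\in\SL(2,\CC)$ regardless of reducibility. Applying it to $A=\xi_1,\ B=\xi_2$ and substituting $\tr\xi_1=t_1$, $\tr\xi_2=t_2$ and $\tr(\xi_1\xi_2)=-t_3$ yields $\tr[\xi_1,\xi_2]-2=t_1^2+t_2^2+t_3^2+t_1t_2t_3-4$. Chaining the three steps gives: $\xi_1,\xi_2,\xi_3$ share an eigenvector $\iff$ $\xi_1,\xi_2$ do $\iff$ $\tr[\xi_1,\xi_2]=2$ $\iff$ $(t_1,t_2,t_3)$ satisfies $(\ref{eqntraces})$, i.e.\ lies on $C$. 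This route has the advantage of bypassing the case-by-case eigenvalue enumeration in $(\ref{eqntracesbis})$.

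I expect the main obstacle to be the converse direction of the common-eigenvector criterion, since the bare condition $\tr[\xi_1,\xi_2]=2$ must be upgraded to the existence of an actual shared eigenline; the normal-form computation above settles it but needs the separate (though elementary) treatment of the diagonalizable, parabolic, and central cases for $\xi_1$. The reduction to two matrices and the Fricke substitution are routine.
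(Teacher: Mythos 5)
Your proof is correct and takes a genuinely different route from the paper's. The paper works directly with the three matrices: the forward implication comes from the eigenvalue relations (\ref{eqntracesbis}) obtained by simultaneous triangularization (and their translation into the trace equation (\ref{eqntraces})), and the converse is a two-case analysis — if some $\xi_i$ is diagonalizable, a matrix computation with $\xi_3^{-1}=-\xi_1\xi_2$ forces $bc=0$; if all three are of Jordan type, a separate computation gives a contradiction. You instead reduce to two matrices — since $\xi_3=-(\xi_1\xi_2)^{-1}$, the three share an eigenvector iff $\xi_1,\xi_2$ do — then apply the classical criterion that $\xi_1,\xi_2\in\SL(2,\CC)$ share an eigenvector iff $\tr[\xi_1,\xi_2]=2$ (your normal-form computations $2-bc(\alpha-\alpha^{-1})^2$ and $2+c^2$ are both correct, and the central case is trivial), and finally use the Fricke identity together with $\tr(\xi_1\xi_2)=-\tr\xi_3^{-1}=-t_3$ to rewrite $\tr[\xi_1,\xi_2]-2$ as $t_1^2+t_2^2+t_3^2+t_1t_2t_3-4$. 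Your route is uniform — all case analysis is confined to the standard two-matrix criterion — it avoids the four-branch eigenvalue bookkeeping of (\ref{eqntracesbis}), and it exhibits $C$ conceptually as the reducibility locus $\{\tr[\xi_1,\xi_2]=2\}$, which is the classical picture in character variety theory. The paper's route has a compensating advantage: the explicit eigenvalue relations $\lambda_3=-\lambda_1^{\pm1}\lambda_2^{\pm1}$ are not a detour, since they furnish the parametrization by $(\lambda,\mu)$ of the strata lying over $C$ that is used immediately afterwards in Sections \ref{subsec:quadric} and \ref{subsec:quadric-two}; with your argument those coordinates would still have to be set up separately for the later computations.
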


\begin{proof}
We have already seen the if part. For the only if part, note that if $(t_1,t_2,t_3)$ satisfies 
(\ref{eqntraces}) then the eigenvalues $\lambda_1,\lambda_2,\lambda_3$ satisfy one
of the four relations in (\ref{eqntracesbis}). Changing some $\lambda_i$ by $\lambda_i^{-1}$ if
necessary, we can suppose that $\lambda_3=-\lambda_1\lambda_2$. 

Suppose that some $\xi_i$ is diagonalizable. Without loss of generality, we suppose it is $\xi_1$,
and choose a basis with $\xi_1= \begin{pmatrix} \lambda_1 & 0 \\ 0 & \lambda_1^{-1} \end{pmatrix} $. 
Write $\xi_2=\begin{pmatrix} a & b \\ c & d \end{pmatrix}$.  Then we have the equation
 $$
\xi_3^{-1}=-\xi_1\xi_2= - \begin{pmatrix} \lambda_1 & 0 \\ 0 & \lambda_1^{-1} \end{pmatrix} 
 \begin{pmatrix} a & b \\ c & d   \end{pmatrix}= 
 - \begin{pmatrix} \lambda_1 a & \lambda_1 b \\  
 \lambda_1^{-1}c & \lambda_1^{-1} d   \end{pmatrix} .
 $$
We get the equations $t_{2}=a+d$ and $t_{3}=\tr \xi_3=\tr \xi_3^{-1}=-\lambda_1 a -\lambda^{-1}_1 d$. 
Hence $\lambda_2+\lambda_2^{-1}=a+d$ and $\lambda_1\lambda_2 +\lambda_1^{-1}
\lambda_2^{-1}=\lambda_1 a +\lambda^{-1}_1 d$. If $\lambda_1\neq \pm 1$, it must
be $a=\lambda_2,d=\lambda_2^{-1}$; thus $ad=1$
and hence $bc=0$, which implies that the matrices share an eigenvector.
If $\lambda_1=\pm 1$, then $\xi_1=\pm \Id$, and $\xi_2=\pm \xi_3$, so the matrices
share their eigenvectors.

Now suppose that all $\xi_i$ are not diagonalizable, so they are of Jordan type. Let $v_i$ be
the only eigenvector (up to scalar multiples) of $\xi_i$. If $\xi_i$ do not share an eigenvector,
then $v_1,v_2$ is a basis, on which $\xi_1=\lambda_1\begin{pmatrix} 1 & b \\ 0 & 1 \end{pmatrix}$,
$\xi_2=\lambda_2 \begin{pmatrix} 1 &  0 \\ c & 1 \end{pmatrix}$, where $\lambda_i\in \{\pm 1\}$.
Then $\xi_3^{-1}=-\lambda_1\lambda_2 \begin{pmatrix} 1+bc & b \\ c & 1 \end{pmatrix}$ hence
$t_3=2\lambda_3=-2\lambda_1\lambda_2 = -\lambda_1\lambda_2 (2+bc)$. So $bc=0$, which is
a contradiction.
\end{proof}

We stratify the space $W$ according to the traces of $\xi_1,\xi_2,\xi_3$. Consider the planes
$t_i=\pm 2$ and the quadric $C$ above. Then consider as well the intersections of these seven
subvarieties. This gives the required stratification. We shall compute the E-polynomials of the
chunk of $W$ lying above each of these strata, starting by the lower-dimensional ones
(points) and going up in dimension.

We shall use some of the polynomials computed in Sections 8-12 in \cite{lomune} which
correspond to some of the strata of the spaces of representations for the case of a curve 
of genus $g=2$. We shall point out which stratum we use each time.

\subsection{Special points} \label{subsec:points}

The intersections of three planes, or of the quadric and two planes, is the collection of eight points given by
$(t_{1},t_{2},t_{3})=(\pm 2, \pm 2, \pm 2)$.  Let $W_{1}$ be the subset of $(A_1,B_1,A_2,B_2,A_3,B_3)\in W$ with traces
given by these possibilities. Note that if $t_i=2$ then $\xi_i=\Id$ or $\xi_i$ is of Jordan type $J_+$; analogously,
 if $t_i=-2$ then $\xi_i=-\Id$ or $\xi_i$ is of Jordan type $J_-$.

 \begin{itemize}
 \item $W_{11}=\lbrace (t_{1},t_{2},t_{3})=(2,2,2) \rbrace = F^{-1}((2,2,2))$. Then $\xi_{i}$ are all of Jordan type 
(if $\xi_1=\Id$ then  $\xi_{2}\xi_{3}=-\Id$, so $\xi_{2}=-\xi_{3}^{-1}$ and $t_{2}=-t_{3}$).  
Choosing an adequate basis, we can assume that $[A_1,B_1]=J_{+}$, so $[A_2,B_2][A_3,B_3]=-(J_{+})^{-1} = J_{-}$. 
This set has been determined in \cite[Stratum $Z_3$, Section 12]{lomune}. It has polynomial $q\, e(\overline{X}_2)^2$. Therefore 
 \begin{align*}
 e(W_{11}) & = q\, e(\overline{X}_{2})^3 e(\pgl/U) \\ 
  & = q^{12}-6q^{11}+2q^{10} +34q^9-12q^8-82q^7-18q^6+54q^5+27q^4.
 \end{align*}

\item $W_{12}= \lbrace (t_{1},t_{2},t_{3})=(2,2,-2) \rbrace = F^{-1}((2,2,-2))$ and the cyclic permutations 
(accounting for three cases). If $\xi_{1}=\Id$, then $[A_2,B_2][A_3,B_3]=-\Id$, with $t_{2}=2$, and $t_{3}=-2$. If $\xi_2 =\Id$ then $\xi_3=-\Id$;
and if $\xi_2\sim J_+$ then $\xi_3\sim J_-$. This produces the contribution (multiplying by $3$ because of the three cyclic permutations)
  \begin{align*}
   e(W_{12}') & =  3e({X}_{0})(e(X_{0})e(X_{1})+e(\pgl/U)e(\overline{X}_{2})e(\overline{X}_{3})) \\
  & = 3q^{12}+18q^{11}+3q^{10}-126q^9 -147q^8+150q^7+273q^6+6q^5-132q^4-48q^3. 
  \end{align*}
If $\xi_{1}\sim J_{+}$, then 
choosing an adequate basis, we can assume that $[A_1,B_1]=J_{+}$, so $[A_2,B_2][A_3,B_3]=-(J_{+})^{-1} = J_{-}$. 
This set has been determined in \cite[Stratum $Z_1$, Section 12]{lomune} to be
$(q-2)e(\overline{X}_{2})e(\overline{X}_{3})+e(\overline{X}_{2})e({X}_{1})+e(\overline{X}_{3})
e({X}_{0})$. Therefore
 \begin{align*}
 e(W_{12}'') & = 3e(\overline{X}_{2})\left( (q-2)e(\overline{X}_{2})e(\overline{X}_{3}) 
 +e(\overline{X}_{2})e({X}_{1})+e(\overline{X}_{3})e({X}_{0}) \right) e(\pgl/U) \\
 & = 6q^{12}+9q^{11}-72q^{10}-66q^9 +120q^8+36q^7-144q^6-6q^5+90q^4+27q^3.
\end{align*}
Finally
\begin{align*}
 e(W_{12}) & = e(W_{12}')+e(W_{12}'') \\ 
 & = 9q^{12}+27q^{11}-69q^{10}-192q^9 -27q^8+186q^7+129q^6-42q^4-21q^3.
\end{align*}

\item $W_{13}=\lbrace (t_{1},t_{2},t_{3})=(-2,-2,2) \rbrace = F^{-1}((-2,-2,2))$ and cyclic permutations 
(which account for three cases). If $\xi_{1}=-\Id$ then it must be $t_{2}=t_{3}$, which is not the case. 
Therefore $\xi_{1}$ is of Jordan type $J_{-}$. Choosing a suitable basis, we can write $\xi_{1}=J_{-}$, 
so $[A_1,B_1][A_2,B_2]=-J_{-}^{-1}= J_+$. 
This set has been determined in \cite[Stratum $Z_3$, Section 11]{lomune} and it has polynomial 
$q\, e(\overline{X}_2) e(\overline{X}_3)$. Therefore 
 \begin{align*}
 e(W_{13}) & =  3 q\, e(\overline{X}_{3})^2 e(\overline{X}_{2})e(\pgl /U ) \\ 
  &= 3q^{12}+12q^{11}-21q^{10}-120q^9-63q^8+108q^7+81q^6.
\end{align*}

\item $W_{14}=\lbrace (t_{1},t_{2},t_{3})=(-2,-2,-2) \rbrace=F^{-1}((-2,-2,-2))$. 
If $\xi_{1}=-\Id$, then $[A_2,B_2][A_3,B_3]=\Id$, with $t_{2}=-2$, and $t_{3}=-2$. If $\xi_2 =-\Id$ then $\xi_3=-\Id$;
and if $\xi_2\sim J_-$ then $\xi_3\sim J_-$. This produces the contribution 
  \begin{align*}
   e(W_{14}') & =  e({X}_{1})\left( e(X_{1})^2+e(\pgl/U)e(\overline{X}_{3})^2 \right) \\
  & = q^{11}+6q^{10}+8q^9-12q^8-20q^7+6q^6+12q^5-q^3.
\end{align*}
If $\xi_{1}\sim J_{-}$, then choosing an adequate basis, we can assume that $[A_1,B_1]=J_{-}$, so $[A_2,B_2][A_3,B_3]=-(J_{-})^{-1} = J_{+}$. 
This set has been determined in \cite[Stratum $Z_2$, Section 11]{lomune} to be
$(q-2)e(\overline{X}_{3})^2+2e(\overline{X}_{3})e({X}_{1})$. Therefore
 \begin{align*}
  e(W_{14}'')  & = e(\overline{X}_{3}) \left( (q-2)e(\overline{X}_{3})^{2}+2e(\overline{X}_{3})e({X}_{1}) \right)e(\pgl/U) \\
  & = q^{12}+9q^{11}+20q^{10}-20q^9-87q^8-7q^7+66q^6+18q^5.
\end{align*}
Hence
 \begin{align*}
 e(W_{14}) = e(W_{14}')+e(W_{14}'')= q^{12}+10q^{11}+26q^{10}-12q^9-99q^8-27q^7+72q^6+30q^5-q^3.
 \end{align*}
\end{itemize}

Adding all up, we obtain 
 \begin{align*}
 e(W_{1})  = 14q^{12}+43q^{11}-62q^{10}-290q^{9}  -201q^{8}+185q^{7}+264q^{6}+84q^{5}-15q^{4}-22q^{3}.
 \end{align*}

\subsection{Special lines} \label{subsec:lines}

The intersection of two of the planes are the lines 
$\lbrace (t_{1},t_{2},t_{3})=(\pm 2, \pm 2,t_{3}), t_{3}\in \CC-\{ \pm 2\} \rbrace$, and the cyclic permutations of these.
The intersection of one of the planes and the quadric is given by the lines
$\lbrace t_{1}=\pm 2$, $t_{2}=\mp t_{3}\in \CC-\{ \pm 2\} \rbrace$ and the cyclic permutations. We denote by
$W_2$ the portion of $W$ lying over these lines, and we stratify in the following sets:

\begin{itemize}
\item $W_{21}$ given by $t_1= 2$, $t_2=\pm 2$ and $t_3\in \CC-\{\pm2\}$.
Note that if $\xi_1= \Id$ then $\xi_{2}\xi_{3}=- \Id$, with $\tr\xi_{2}=\pm 2$ and $\tr \xi_{3}\neq \pm2$, a contradiction. Then $\xi_1$ is of Jordan type. 
Choosing an adequate basis, we can assume that $\xi_{1}=J_{+}$. Then $[A_2,B_2][A_3,B_3]=-(J_+)^{-1}=J_{-}$, 
where $\tr \xi_2=\pm 2$ and $\tr \xi_3\neq \pm 2$. This set has been computed in \cite[Stratum $Z_{4}$, Section 12]{lomune}
and it has E-polynomial $q\, (e(\overline{X}_{2})+e(\overline{X}_{3}))e(\overline{X}_{4}/\mathbb{Z}_{2})$. So 
\begin{align*}
e(W_{21}) & = e(\overline{X}_{2})e(\pgl /U)q\, (e(\overline{X}_{2})+e(\overline{X}_{3}))e(\overline{X}_{4}/\mathbb{Z}_{2})\\ 
&=  2q^{13}-7q^{12}-13q^{11}+47q^{10}+40q^9 -103q^8-58q^7+93q^6+38q^5-30q^4-9q^3.
\end{align*}

\item $W_{22}$ given by $t_1= -2$, $t_2=\pm 2$ and $t_3\in \CC-\{\pm2\}$. Again $\xi_1$ is of Jordan type. 
Fixing a basis we have $\xi_1=J_-$ and $\xi_{2}\xi_{3}=-J_-^{-1}= J_{+}$, 
where $\tr \xi_{2}=\pm 2$ and $\tr \xi_{3} \neq \pm 2$. 
This set has been computed in \cite[Stratum $Z_{4}$, Section 11]{lomune}
and it has E-polynomial $q\, (e(\overline{X}_{2})+e(\overline{X}_{3}))e(\overline{X}_{4}/\mathbb{Z}_{2})$. So 
\begin{align*}
 e(W_{22}) & =  e(\overline{X}_{3})e(\pgl /U) q(e(\overline{X}_{2})+e(\overline{X}_{3}))e(\overline{X}_{4}/\mathbb{Z}_{2}) \\
 & = 2q^{13}+3q^{12}-22q^{11}-27q^{10}+61q^9 +58q^8-68q^7-43q^6+27q^5+9q^4.
\end{align*}

\item $W_{23}$ given by $t_1=2$, $\xi_{1}=\Id$ and $t_{2}=-t_{3}\in \CC-\{ \pm 2\}$. Now
$\xi_{2}\xi_{3}=-\Id$, $t_{2}=-t_{3}\neq \pm 2$. This is computed in \cite[Stratum $W_4$, Section 9]{lomune},
$e(W_4)=q^{9}-2q^{8}-7q^{7}  -18q^{6}+24q^{5}+28q^{4}-17q^{3}-8q^{2}-q$. So 
 \begin{align*}
 e(W_{23}) & = e(X_{0})e(W_{4}) \\ 
 & = q^{13}+2q^{12}-16q^{11}-48q^{10}-33q^9+170q^8  +143q^7-200q^6-128q^5+72q^4+33q^3+4q^2.
 \end{align*}

\item $W_{24}$ given by $t_1=2$, $\xi_{1}\sim J_{+}$. Using an adequate basis, we have
$\xi_1=J_+$, and $\xi_{2}\xi_{3}=-J_+^{-1} = J_{-}$, and $t_{2}=-t_{3}\neq \pm 2$. 
This is computed in \cite[Stratum $Z_5$, Section 12]{lomune},
$e(Z_5)=q^{8}-3q^{7} -3q^{6}-35q^{5}+69q^{4}-15q^{3}-11q^{2}-3q$. So 
\begin{align*}
 e(W_{24}) & = e(\overline{X}_{2})e(\pgl /U) e(Z_{5}) \\ 
 & = q^{13}-5q^{12}-q^{11}-15q^{10}+148q^9-28q^8-336q^7+112q^6+227q^5-55q^4-39q^3-9q^2. 
\end{align*}

\item $W_{25}$ given by $t_1=-2$, $\xi_{1}=-\Id$. Then $\xi_{2}\xi_{3}=\Id$, with $t_{2}=t_{3}\neq \pm 2$. 
This is computed in \cite[Stratum $Y_4$, Section 8]{lomune},\footnote{We make a correction to $e(Y_{4})$ in \cite{lomune}, $e(Y_{4})=q^{9}-2q^{8}+2q^{7}-18q^{6}+6q^{5}+28q^{4}-8q^{3}-8q^{2}-q$.}
$e(Y_4)=q^{9}-2q^{8}+2q^{7}-18q^{6}+6q^{5}+28q^{4}-8q^{3}-8q^{2}-q$. So
\begin{align*}
 e(W_{25}) & =  e({X}_{1})e(Y_{4}) \\ 
 & = q^{12}-2q^{11}+q^{10}-16q^9+4q^8+46q^7-14q^6-36q^5+7q^4+8q^3+q^2.
\end{align*}

\item $W_{26}$ given by $t_1=-2$, $\xi_{1} \sim J_{-}$. Choosing a basis so that $\xi_1=J_-$, 
$\xi_{2}\xi_{3}=J_{+}$, with $t_{2}=t_{3} \neq \pm 2$. 
This is computed in \cite[Stratum $Z_5$, Section 11]{lomune},
$e(Z_5)=q^{8}-3q^{7}-3q^{6}-35q^{5}+69q^{4}-15q^{3}-11q^{2}-3q$. So
 \begin{align*}
e(W_{26}) & = e(\overline{X}_{3})e(\pgl /U) e(Z_{5}) \\ 
 & = q^{13}-13q^{11}-44q^{10}-24q^9 +236q^8-20q^7-228q^6+47q^5+36q^4+9q^3.  
\end{align*}
\end{itemize}

Considering the possible permutations, and adding the E-polynomials of the strata, we get
\begin{align*}
  e(W_{2}) & =  3e(W_{21})+3e(W_{22})+3e(W_{23})+3e(W_{24})+3e(W_{25})+3e(W_{26}) \\ 
  & = 21q^{13}-18q^{12}-201q^{11}-258q^{10}+528q^9+1011q^8-879q^7-840q^6+525q^5+117q^4+6q^3-12q^2.
\end{align*}

\subsection{Special planes} \label{subsec:planes}

Now consider the planes given by the equations $\lbrace t_{i}=\pm 2 \rbrace$, 
from which we remove the previous strata. We do the case $t_1=\pm 2$ and multiply by three the
result to account for the three cases $i=1,2,3$. The planes are given by 
$t_1=\pm 2$, where $t_{2},t_{3} \neq \pm 2$ and $t_{2} \neq \mp  t_{3}$. 
Note that it cannot be $\xi_{1}=\pm \Id$, since this would imply $t_{2}=\mp t_{3}$. 
We have the following cases:

\begin{itemize}
\item $W_{31}$ given by $\xi_{1}\sim J_{+}$. This implies that, in a suitable basis, $\xi_{2}\xi_{3}=J_{-}$, 
together with the previous restrictions for the traces. This is the set given
in \cite[Stratum $Z_{6}$, Section 12]{lomune}, which has E-polynomial
$e(Z_6)=q^{9}-5q^{8}+24q^{6} +20q^{5}-60q^{4}+6q^{3}+11q^{2}+3q$. So
 \begin{align*}
 e(W_{31}) & = e(\overline{X}_{2})e(\pgl /U) e(Z_{6}) \\  
 & = q^{14}-7q^{13}+6q^{12}+46q^{11}-35q^{10}-211q^9+94q^8+351q^7-103q^6-218q^5+28q^4+39q^3+9q^2.
 \end{align*}

\item $W_{32}$ given by $\xi_{1} \sim J_{-}$. This implies that, in a suitable basis, $\xi_{2} \xi_{3}=J_{+}$, 
together with the restrictions $t_{2}, t_{3} \neq \pm 2$, $t_{2}\neq t_{3}$.
This is the set given in \cite[Stratum $Z_{6}$, Section 11]{lomune}, which has E-polynomial
$e(Z_6)=q^{9}-5q^{8}+15q^{6} +11q^{5}-51q^{4}+15q^{3}+11q^{2}+3q$. So
 \begin{align*}
 e(W_{32}) & = e(\overline{X}_{3})e(\pgl /U) e(Z_{6}) \\ 
 & =  q^{14}-2q^{13}-16q^{12}+17q^{11}+71q^{10}-33q^9-194q^8+74q^7+174q^6-47q^5-36q^4-9q^3.
 \end{align*}
\end{itemize}
The total contribution of the planes is
\begin{align*}
 e(W_{3}) &= 3(e(W_{31})+e(W_{32}))  \\ &= 6q^{14}-27q^{13}-30q^{12}+189q^{11}+108q^{10}-732q^9
 -300q^8+1275q^7+213q^6-795q^5-24q^4+90q^3+27q^2.
\end{align*}

\subsection{The quadric $C$. One eigenvector} \label{subsec:quadric}

We now deal with the part of $W$ lying over
the quadric $C$, with $t_{1},t_{2},t_{3} \neq \pm 2$. As we saw in Lemma 
\ref{lem:C}, this corresponds to the case that $\xi_{1},\xi_{2}, \xi_{3}$ share 
(at least) one eigenvector.

We deal now with the case where $\xi_{1},\xi_{2}$ and $\xi_{3}$ share just one eigenvector (up to scalar multiples), which we denote by $v$. 
Using it as the first vector of a basis that diagonalizes $\xi_{1}$, we can arrange that
 \begin{equation}
 \xi_{1}= \begin{pmatrix} \lambda & 0 \\ 0 & \lambda^{-1} \end{pmatrix}, \: 
 \xi_{2}= \begin{pmatrix} \mu & 1 \\ 0 & \mu^{-1}\end{pmatrix}, \: 
 \xi_{3}=\begin{pmatrix} -\lambda^{-1} \mu^{-1} & \lambda \\ 0 &  -\lambda \mu \end{pmatrix} \label{eqfirstcase} 
 \end{equation}
where $\lambda=\lambda_1$, $\mu=\lambda_2$ are the eigenvalues of $\xi_{1},\xi_{2}$ associated to the eigenvector $v$. 
Our choice of basis for the above expressions of $\xi_{1},\xi_{2},\xi_{3}$ gives us a slice 
for the $\pgl$-action, since their stabilizer is trivial. So
we shall only need to multiply by $e(\pgl)$ after computing the E-polynomial of \eqref{eqfirstcase}. 
This set can be regarded as a fibration 
  $$
  Z \too B= \lbrace (\lambda,\mu)\in (\mathbb{C}^{\ast})^{2} \mid \lambda,\mu,\lambda\mu \neq \pm 1 \rbrace,
 $$ 
with fiber 
 $$
 \overline{X}_{4,\lambda} \times \overline{X}_{4,\mu} \times \overline{X}_{4,-\lambda^{-1}\mu^{-1}}.
 $$
To compute its E-polynomial, we would like to extend the fibration to the six curves $\lambda,\mu,\lambda\mu=\pm 1$ and 
apply Corollary \ref{cor:thm1}. We cannot extend the fibration; however we can extend the local system definining the Hodge
monodromy fibration. By (\ref{eqn:RX4}), the Hodge monodromy fibration $R(\overline{X}_4)$ is trivial over $\lambda=\pm 1$, and it
is of order $2$ over $\lambda=0$. Consider the projections:
  \begin{eqnarray*}
 \pi_{1}: B & \longrightarrow &  \mathbb{C}^{\ast} - \lbrace \pm 1 \rbrace \\
(\lambda, \mu) & \mapsto & \lambda \\
\pi_{2}: B & \longrightarrow &  \mathbb{C}^{\ast} -\lbrace \pm 1 \rbrace \\
(\lambda, \mu) & \mapsto & \mu \\
\pi_{3}: B & \longrightarrow &  \mathbb{C}^{\ast} - \lbrace \pm 1 \rbrace \\
(\lambda, \mu) & \mapsto & - \lambda^{-1} \mu^{-1}
  \end{eqnarray*}
Then 
  $$
 \overline Z= \pi_{1}^{\ast}(\overline{X}_{4})\times \pi_{2}^{\ast}(\overline{X}_{4}) \times \pi_{3}^{\ast}(\overline{X}_{4}).
  $$
The Hodge monodromy fibration $R(\overline Z)$ can be extended (locally trivially) over the lines $\lambda =\pm 1$, 
$\mu=\pm 1$ and $\lambda\mu=\pm 1$, to a 
Hodge monodromy fibration $\tilde R(\overline Z)$ over $\tilde B=\CC^*\x \CC^*$. Moreover, the monodromy around 
$\lambda=0$ and $\mu=0$ is of order two.
The corresponding group is $\Gamma=\ZZ_2\x \ZZ_2$ and the representation ring is generated by 
representations $T, N_{1},N_{2}, N_{12}=N_{1}\otimes N_{2}$,
where $T$ is the trivial representation, $N_1$ is the representation with non-trivial monodromy around the origin of the first copy of $\CC^*$, and 
$N_2$ is the representation with non-trivial monodromy around the origin of the second copy of $\CC^*$.

Pulling back the Hodge monodromy representation of ${\overline{X}}_{4}$ given in (\ref{eqn:RX4}), 
we have that $R(\pi_1^*(\overline{X}_4))=aT+bN_{1}$,
$R(\pi_2^*(\overline{X}_4))=aT+bN_{2}$ and $R(\pi_3^*(\overline{X}_4))=aT+bN_{12}$, where $a=q^3-1$, $b=3q^2-3q$.
So the Hodge monodromy representation of $\overline Z$ is
  \begin{align*}
  R(\overline Z) & = (aT+bN_{1})\otimes (aT+bN_{2}) \otimes (aT+bN_{12})  \\
  & =  (a^{3}+b^{3})T+(a^{2}b+ab^{2})N_{1}+(a^{2}b+ab^{2})N_{2}+(a^{2}b+ab^{2})N_{12} .
\end{align*}

We extend $R(\overline Z)$ to a Hodge monodromy fibration $\tilde R(\overline Z)$ over $\tilde B=\CC^*\x \CC^*$
with the same formula, and 
compute its E-polynomial, applying Corollary \ref{cor:thm1},
\begin{align*}
e(\tilde{R}(\overline Z)) & =  (q-1)^{2}e(F)^{inv} =  (q-1)^{2}(a^{3}+b^{3}) 
 =  (q-1)^{2}((q^3-1)^{3}+(3q^{2}-3q)^{3}) \\
& =  q^{11}-2q^{10}+q^9+24q^8-129q^7+267q^6-267q^5+129q^4-24q^3-q^2+2q-1.
\end{align*}

Now we substract the contribution of $\tilde R(\overline Z)$ over the lines $\lambda=\pm 1$, $\mu=\pm 1$ and $\lambda\mu=\pm 1$.

\begin{itemize}
\item Consider the curve defined by $\lambda=1$, $\mu\neq \pm 1$. The fibration has the Hodge monodromy of 
a fibration over $\CC^*-\{\pm 1\}$ with fiber
 $$
 \overline{X}_{4,\lambda_{0}}\times \overline{X}_{4,\mu} \times \overline{X}_{4,-\mu^{-1}} 
 $$
This has Hodge monodromy representation equal to $e(\overline{X}_{4,\lambda_0})
R(\overline{X}_{4})\otimes \tau^{\ast}R(\overline{X}_{4})$, where $\tau(\mu)=-\mu^{-1}$. This is equal to
\begin{align*}
 e(\overline{X}_{4,\lambda_0})R(\overline{X}_{4})\otimes \tau^{\ast}R(\overline{X}_{4}) 
 & =  e(\overline{X}_{4,\lambda_0})R(\overline{X}_{4})\otimes R(\overline{X}_{4}) \\
 & =   e(\overline{X}_{4,\lambda_0}) (aT+bN)\otimes (aT+bN) \\
 & =  e(\overline{X}_{4,\lambda_0}) ((a^{2}+b^{2})T + (2ab)N). 
 \end{align*}
Using Corollary \ref{cor:general-fibr} and (\ref{eqn:e(X4-bar)}), we get that the contribution equals
 \begin{align} \label{eqn:333}
 e(\overline{X}_{4,\lambda_0}) &\left((q-3)(a^{2}+b^{2}) -2 (2ab) \right) \\ 
&= q^{10}-15q^8-36q^7-24q^6+300q^5 -238q^4-60q^3+39q^2+20q+3. \nonumber
 \end{align}
 
\item The computation for $\lambda=-1$, $\mu \neq \pm1$ is analogous and gives the same quantity. 

\item By symmetry, the contribution for $\mu=\pm 1$, and for $\lambda\mu=\pm1$ is the same as for $\lambda=\pm 1$.
So we have to multiply (\ref{eqn:333}) by $6$.

\item The contribution of the four points $(\pm 1, \pm 1)$ is
$4e(X_{4,\lambda_{0}})^{3} = 4q^9+36q^8+72q^7-120q^6-288q^5+288q^4+120q^3-72q^2-36q-4$.
\end{itemize}

Therefore, the E-polynomial of the original fibration is:
 \begin{align*}
 e(\overline Z) & =  e(\tilde{R}(\overline Z)) - 6 (q^{10}-15q^8-36q^7-24q^6+300q^5 -238q^4-60q^3+39q^2+20q+3)-4e(X_{4,\lambda_{0}})^{3} \\
   & = q^{11}-8q^{10}-3q^9+78q^8+15q^7+531q^6 -1779q^5+1209q^4+216q^3-163q^2-82q-15
 \end{align*}
and
\begin{align*}
e(W_{4}) & = e(\pgl)e(\overline Z) \\
 &= q^{14}-8q^{13}-4q^{12}+86q^{11}+18q^{10}+453q^9-1794q^8 
+678q^7+1995q^6-1372q^5-298q^4+148q^3+82q^2+15q.
\end{align*}

\subsection{The quadric $C$. Two eigenvectors} \label{subsec:quadric-two}

Suppose now that $(t_1,t_2,t_3)\in C$ and $\xi_{1},\xi_{2},\xi_{3}$ share two eigenvectors. Then they can all 
be simultaneously diagonalized. With respect to a suitable basis, we have that:
 $$
 [A_1,B_1]= \begin{pmatrix} \lambda & 0 \\ 0 & \lambda^{-1} \end{pmatrix}, \quad 
 [A_2,B_2]=\begin{pmatrix} \mu & 0 \\ 0 & \mu^{-1} \end{pmatrix}, \quad 
 [A_3,B_3]=\begin{pmatrix} -\lambda^{-1} \mu^{-1} & 0 \\ 0 & -\lambda \mu \end{pmatrix}.
 $$
This defines a fibration $\overline{Z} \to {B}:= 
\lbrace (\lambda, \mu) \in (\mathbb{C}^{\ast})^2 |  \lambda,\mu, \lambda \mu \neq \pm 1 \rbrace$, 
with fiber
 $$
 \overline{X}_{4,\lambda} \times \overline{X}_{4,\mu} \times \overline{X}_{4,-\lambda^{-1}\mu^{-1}} 
 $$
The stabilizer of $\xi_{1},\xi_{2},\xi_{3}$ is $D\x \ZZ_2$, where $D$ are the diagonal matrices
and  the $\mathbb{Z}_{2}$-action is 
given by the simultaneous permutation of the eigenvalues, i.e, by conjugation by $P_{0}=\begin{pmatrix} 0 &1 \\ 1 &0\end{pmatrix}$. 
Therefore the stratum we are analysing is 
  $$
  W_5\cong Z=( \overline{Z} \times \pgl/D)/\ZZ_2\, .
 $$ 
The action on the basis of $\overline{Z} \to B$ takes $(\lambda,\mu)$ to $(\lambda^{-1},\mu^{-1})$, producing a 
fibration
  \begin{equation}\label{eqn:barZ}
 \xymatrix{ \overline{Z} \ar[r] \ar[d] & \overline{Z}':=\overline{Z}/\mathbb{Z}_{2} \ar[d]  \\ 
 {B} \ar[r] & {B}'= {B}/\mathbb{Z}_{2}}
  \end{equation}
If we write
 \begin{eqnarray}
 \pi_{1}: B' & \longrightarrow &  \mathbb{C}^{\ast}/ \mathbb{Z}_{2} \nonumber \\
 (\lambda, \mu) & \mapsto & \lambda \nonumber \\
 \pi_{2}: B' & \longrightarrow &  \mathbb{C}^{\ast}/ \mathbb{Z}_{2} \label{eqn:projections} \ \\
 (\lambda, \mu) & \mapsto & \mu \nonumber \\
 \pi_{3}: B' & \longrightarrow &  \mathbb{C}^{\ast}/ \mathbb{Z}_{2} \nonumber \\
 (\lambda, \mu) & \mapsto & \lambda^{-1} \mu^{-1} \nonumber
 \end{eqnarray}
($\ZZ_2$ acts on $\CC^*$ by $x\sim x^{-1}$), we can obtain three 
pullback bundles 
  \begin{equation*}
 \xymatrix{ \overline{Z}_{i}'\ar[r] \ar[d] &  \overline{X}_{4}/\mathbb{Z}_{2} \ar[d]  \\ 
 {B}' \ar[r]^>>>>>{\pi_{i}} & (\mathbb{C}^* -  \lbrace \pm 1 \rbrace) /\mathbb{Z}_{2} }
   \end{equation*}
$i=1,2,3$, such that $\overline{Z}'\cong \overline{Z}_{1}'\times \overline{Z}_{2}'\times \overline{Z}_{3}' 
\cong \pi_{1}^{\ast}(\overline{X}_{4}/\mathbb{Z}_{2})\times \pi_{2}^{\ast}(\overline{X}_{4}/\mathbb{Z}_{2}) \times 
f^*\pi_{3}^{\ast}(\overline{X}_{4}/\mathbb{Z}_{2})$, with $f(x)=-x$.

As a consequence, if we write $R(\overline{X}_{4}/\mathbb{Z}_{2})=aT+bS_{2}+cS_{-2}+dS_{0}$, as
in (\ref{eqn:RX4Z2}), the Hodge monodromy representation is
  \begin{align} 
  R(\overline{Z}') =& \pi_{1}^{\ast}(R(\overline{X}_{4}/\mathbb{Z}_{2}))\otimes \pi_{2}^{\ast}(R(\overline{X}_{4}/\mathbb{Z}_{2})) 
  \otimes f^*\pi_{3}^{\ast}(R(\overline{X}_{4}/\mathbb{Z}_{2})) \label{eqnlongrepresentations} \\
  =& (aT+bS_{2}^{\lambda}+cS_{-2}^{\lambda}+dS_{0}^{\lambda}) \otimes (aT+bS_{2}^{\mu}+cS_{-2}^{\mu}+dS_{0}^{\mu}) \otimes
  (aT+cS_{2}^{\lambda\mu}+bS_{-2}^{\lambda\mu}+dS_{0}^{\lambda\mu}) \nonumber \\
 = &  a^{3}T+a^{2}bS_{2}^{\mu}+a^{2}cS_{-2}^{\mu} + a^{2}dS_{0}^{\mu} 
 +a^{2}bS_{2}^{\lambda} + ab^{2}S_{2}^{\lambda}\otimes S_{2}^{\mu} 
 +abc S_{2}^{\lambda}\otimes S_{-2}^{\mu}+abd S_{2}^{\lambda} \otimes S_{0}^{\mu} \nonumber\\
 & +a^{2}c S_{-2}^{\lambda} + abcS_{-2}^{\lambda}\otimes S_{2}^{\mu} 
 +ac^{2}S_{-2}^{\lambda}\otimes S_{-2}^{\mu} + acd S_{-2}^{\lambda}\otimes S_{0}^{\mu}\nonumber 
+ a^{2}d S_{0}^{\lambda} +abd S_{0}^{\lambda}\otimes S_{2}^{\mu} \\
 & 
 +acd S_{0}^{\lambda}\otimes S_{-2}^{\mu} + ad^{2} S_{0}^{\lambda} \otimes S_{0}^{\mu} 
 + a^{2}c S_{2}^{\lambda \mu} +abcS_{2}^{\mu}\otimes S_{2}^{\lambda \mu} 
 +ac^{2}S_{-2}^{\mu}\otimes S_{2}^{\lambda \mu} +acd S_{0}^{\mu} \otimes S_{2}^{\lambda \mu} \nonumber\\
 & +abc S_{2}^{\lambda}\otimes S_{2}^{\lambda \mu} + b^{2}c S_{2}^{\lambda} \otimes S_{2}^{\mu} 
 \otimes S_{2}^{\lambda \mu} + bc^{2} S_{2}^{\lambda} \otimes S_{-2}^{\mu} 
 \otimes S_{2}^{\lambda \mu} +bcd S_{2}^{\lambda} \otimes S_{0}^{\mu} \otimes S_{2}^{\lambda \mu}\nonumber \\
 & +ac^{2} S_{-2}^{\lambda} \otimes S_{2}^{\lambda \mu} + bc^{2} S_{-2}^{\lambda} \otimes S_{2}^{\mu} 
 \otimes S_{2}^{\lambda \mu} + c^{3} S_{-2}^{\lambda} \otimes S_{-2}^{\mu} \otimes 
 S_{2}^{\lambda \mu} +c^{2}d S_{-2}^{\lambda} \otimes S_{0}^{\mu} \otimes S_{2}^{\lambda \mu}\nonumber \\
 & +acd S_{0}^{\lambda} \otimes S_{2}^{\lambda \mu} +bcd S_{0}^{\lambda} \otimes S_{2}^{\mu} 
 \otimes S_{2}^{\lambda \mu} +c^{2}d S_{0}^{\lambda} \otimes S_{-2}^{\mu} \otimes 
 S_{2}^{\lambda \mu} +cd^{2} S_{0}^{\lambda} \otimes S_{0}^{\mu} \otimes S_{2}^{\lambda \mu} \nonumber\\
 & +a^{2}b S_{-2}^{\lambda \mu} +ab^{2} S_{2}^{\mu} \otimes S_{-2}^{\lambda \mu} +abc S_{-2}^{\mu} 
 \otimes S_{-2}^{\lambda \mu} +abd S_{0}^{\mu} \otimes S_{-2}^{\lambda \mu} \nonumber\\
 & +ab^{2} S_{2}^{\lambda} \otimes S_{-2}^{\lambda \mu} + b^{3} S_{2}^{\lambda} \otimes S_{2}^{\mu} 
 \otimes S_{-2}^{\lambda \mu} +b^{2}c S_{2}^{\lambda}\otimes S_{-2}^{\mu} \otimes S_{-2}^{\lambda \mu} 
 + b^{2}d S_{2}^{\lambda} \otimes S_{0}^{\mu} \otimes S_{-2}^{\lambda \mu} \nonumber\\
 & +abc S_{-2}^{\lambda} \otimes S_{-2}^{\lambda \mu} +b^{2}c S_{-2}^{\lambda}\otimes S_{2}^{\mu} 
 \otimes S_{-2}^{\lambda \mu} +bc^{2} S_{-2}^{\lambda} \otimes S_{-2}^{\mu} \otimes S_{-2}^{\lambda \mu} 
 +bcd S_{-2}^{\lambda} \otimes S_{0}^{\mu} \otimes S_{-2}^{\lambda \mu} \nonumber\\
 & +abd S_{0}^{\lambda} \otimes S_{-2}^{\lambda \mu} +b^{2}d S_{0}^{\lambda} \otimes S_{2}^{\mu} \otimes 
 S_{-2}^{\lambda \mu} +bcd S_{0}^{\lambda} \otimes S_{-2}^{\mu} \otimes 
 S_{-2}^{\lambda \mu} +bd^{2} S_{0}^{\lambda} \otimes S_{0}^{\mu} \otimes S_{-2}^{\lambda \mu} \nonumber\\
 & + a^{2}d S_{0}^{\lambda \mu} + abd S_{2}^{\mu} \otimes S_{0}^{\lambda \mu} 
 +acd S_{-2}^{\mu} \otimes S_{0}^{\lambda \mu} +ad^{2} S_{0}^{\mu} \otimes S_{0}^{\lambda \mu} \nonumber\\
 & + abd S_{2}^{\lambda} \otimes S_{0}^{\lambda \mu} 
 +b^{2}d S_{2}^{\lambda} \otimes S_{2}^{\mu} \otimes S_{0}^{\lambda \mu} +bcd S_{2}^{\lambda} \otimes S_{-2}^{\mu} \otimes 
 S_{0}^{\lambda \mu} +bd^{2} S_{2}^{\lambda} \otimes S_{0}^{\mu} \otimes S_{0}^{\lambda \mu} \nonumber\\
 & +acd S_{-2}^{\lambda} \otimes S_{0}^{\lambda \mu} + bcd S_{-2}^{\lambda} \otimes S_{2}^{\mu} \otimes S_{0}^{\lambda \mu} 
 + c^{2}d S_{-2}^{\lambda} \otimes S_{-2}^{\mu} \otimes 
 S_{0}^{\lambda \mu} +cd^{2} S_{-2}^{\lambda} \otimes S_{0}^{\mu} \otimes S_{0}^{\lambda \mu}  \nonumber\\  
 & +ad^{2} S_{0}^{\lambda} \otimes S_{0}^{\lambda \mu} + bd^{2} S_{0}^{\lambda} \otimes S_{2}^{\mu} 
 \otimes S_{0}^{\lambda \mu} +cd^{2} S_{0}^{\lambda} \otimes S_{-2}^{\mu} 
 \otimes S_{0}^{\lambda \mu} +d^{3} S_{0}^{\lambda} \otimes S_{0}^{\mu} \otimes S_{0}^{\lambda \mu}, \nonumber 
 \end{align}
where $S_{\bullet}^{\lambda}=\pi_{1}^{\ast}(S_{\bullet}), S_{\bullet}^{\mu}=\pi_{2}^{\ast}(S_{\bullet})$ and $S_{\bullet}^{\lambda \mu}= \pi_{3}^{\ast}(S_{\bullet})$.
To obtain the E-polynomial of the total space, we need to substitute each representation by its associated E-polynomial, by Theorem \ref{thm:general-fibr}.

\begin{prop}
We have
\begin{itemize}
\item $e(T)=e(S_{2}^{\lambda}\otimes S_{2}^{\mu}\otimes S_{2}^{\lambda \mu})=e(S_{2}^{\lambda}\otimes S_{-2}^{\mu}\otimes S_{-2}^{\lambda \mu})=q^2-6q+9$,
\item $e(S_{0}^{\bullet})=e(S_{-2}^{\lambda}\otimes S_{-2}^{\mu} \otimes S_{-2}^{\lambda \mu})=e(S_{-2}^{\lambda}\otimes S_{2}^{\mu} \otimes S_{2}^{\lambda \mu})=-2q+6$,
\item $e(S_{\pm 2}^{\bullet})=e(S_{\pm 2}^{\bullet} \otimes S_{\pm 2}^{\bullet}) = -q+5$,
\end{itemize}
for $\bullet=\lambda,\mu,\lambda\mu$.
\end{prop}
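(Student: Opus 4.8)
The plan is to read each listed number as $e(S)=e(H^*_c(B',S))$, where $S$ is the corresponding rank-one local system on the base $B'=B/\ZZ_2$ of the fibration (\ref{eqn:barZ}); this identification is exactly the $\ZZ[q]$-linear map of Theorem \ref{thm:general-fibr}. I would exploit the étale double cover $p\colon B\to B'$ given by $\iota(\lambda,\mu)=(\lambda^{-1},\mu^{-1})$, for which $e(H^*_c(B',S))=e(H^*_c(B,p^*S))^{+}$, the $\iota$-invariant part. The first step is to identify $p^*S$ as a local system on all of $(\CC^*)^2$. Writing $\sigma\colon\CC^*\to\CC$, $x\mapsto x+x^{-1}$, for the branched double cover through which each $\pi_i$ factors, the ramification at $x=\pm1$ squares the monodromy there while the two ends are interchanged; hence $\sigma^*S_{\pm2}$ extends over $x=\pm1$ to the order-two local system $\eta$ on $\CC^*$ that is nontrivial around $0$. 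Consequently $p^*S^{\lambda}_{\pm2}=\mathrm{pr}_\lambda^*\eta$, $p^*S^{\mu}_{\pm2}=\mathrm{pr}_\mu^*\eta$, $p^*S^{\lambda\mu}_{\pm2}=\mathrm{pr}_\lambda^*\eta\otimes\mathrm{pr}_\mu^*\eta$, and $p^*S_0^\bullet$ is trivial; in particular every tensor product in the statement pulls back to a power of $\eta$ in each variable, i.e.\ to either a nontrivial rank-one system on $(\CC^*)^2$ or the trivial one.

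I would then run the computation $\ZZ_2$-equivariantly through the stratification $(\CC^*)^2=B\sqcup\bigcup_{i}L_i$ into the six removed curves $L_i\cong\CC^*$ (the loci $\lambda=\pm1$, $\mu=\pm1$, $\lambda\mu=\pm1$) and their four triple points $(\pm1,\pm1)$. Additivity of compactly supported cohomology, taken in the invariant part, gives
\[
e(S)=e\big(H^*_c((\CC^*)^2,p^*S)\big)^{+}-e\big(H^*_c(\textstyle\bigcup_i L_i,p^*S)\big)^{+}.
\]
The first term is handled by Corollary \ref{cor:thm1}: when $p^*S$ is nontrivial it vanishes (already $H^*_c(\CC^*,\eta)=0$), and when $p^*S$ is trivial it equals $e((\CC^*)^2)^{\pm}\in\{q^2+1,\,-2q\}$ according to the global equivariant sign. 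The boundary term is purely one-dimensional: on each $L_i$ the restriction of $p^*S$ is either trivial or a copy of $\eta$, so Corollary \ref{cor:general-fibr} evaluates $e(H^*_c(L_i,p^*S))$, and inclusion–exclusion over the four fixed points (each on three of the $L_i$) assembles the total. A short bookkeeping with the geometric $\iota$-action (inversion on each $\CC^*$ is orientation preserving, acts by $+1$ on the top class $\mathbb{Q}(-1)$, and is computed on $H^1_c$ from its action on the loops around $0,\pm1$) produces, in the untwisted case, $e(\bigcup_iL_i)^{+}=6q-8$ and $e(\bigcup_iL_i)^{-}=-6$.

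The genuine difficulty, and the step I would isolate, is the determination of the $\iota$-equivariant structure of $p^*S$ at the fixed loci, namely the sign by which $\iota$ acts on the one-dimensional fibre of $\eta$ over $\lambda=+1$ as opposed to $\lambda=-1$. Because $\eta$ has monodromy $-1$ around $0$, these two signs are necessarily opposite; equivalently, in the multiplicative character $\chi_S\colon\ZZ_2\to\{\pm1\}$ recording this action, each factor $S_2^\bullet$ contributes $+1$ and each factor $S_{-2}^\bullet$ contributes $-1$. Granting this, the two regimes resolve. If $p^*S$ is trivial (e.g.\ $T$ itself, $S_0^\bullet$, or $S^{\lambda}_{2}\!\otimes\!S^{\mu}_{2}\!\otimes\!S^{\lambda\mu}_{2}$), then $S$ is the trivial system when $\chi_S$ is trivial—an even number of $S_{-2}$-factors—giving $e(S)=(q^2+1)-(6q-8)=(q-3)^2$, and it is the sign system of the cover when $\chi_S$ is nontrivial—an odd number of $S_{-2}$-factors—giving $e(S)=(-2q)-(-6)=-2q+6$; this yields the first two bullets and explains why $S_0^\bullet$ joins the $-2q+6$ group. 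If $p^*S$ is nontrivial the $(\CC^*)^2$-term vanishes and the one-dimensional boundary computation, weighted by the same signs, gives $e(\bigcup_iL_i,p^*S)^{+}=q-5$, whence $e(S)=-q+5$; this covers the single systems $S^{\bullet}_{\pm2}$ and the products of two such as $S^{\lambda}_{2}\!\otimes\!S^{\mu}_{2}$.

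Finally I would note that all remaining terms of $R(\overline{Z}')$ in (\ref{eqnlongrepresentations}) fall into these cases under the symmetry permuting $(\lambda,\mu,\lambda\mu)$ together with the sign relabelling $2\leftrightarrow-2$ carried by $\pi_3$, so the listed representatives suffice. The only place where the argument is not mechanical is the sign rule for $\chi_S$: once that single computation of the equivariant structure of $\eta$ at $\pm1$ is pinned down, every value in the statement follows from Corollaries \ref{cor:general-fibr} and \ref{cor:thm1}.
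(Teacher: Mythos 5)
Your strategy is a genuinely different route from the paper's, and in outline it is a good one: you compute $e(S)$ as the $\iota$-invariant part of $H^*_c(B,p^*S)$ on the double cover $B$, identify each $p^*S$ with a product of copies of the order-two system $\eta$ extended over $(\CC^*)^2$, use the vanishing of $H^*_c((\CC^*)^2,\cdot)$ for the nontrivial systems, and do equivariant bookkeeping over the six removed curves and four points. (The paper instead realizes each sum $T+S$, and products of such, as pushforwards of explicit covers such as $\lambda=y^2$ or $\lambda=-y^2$, computes the E-polynomials of those covers directly, and solves for $e(S)$ by subtraction.) Your intermediate numbers are correct: $e((\CC^*)^2)^{\pm}=q^2+1$ and $-2q$, $e(\bigcup_i L_i)^{\pm}=6q-8$ and $-6$, and the boundary value $q-5$ whenever $p^*S$ is nontrivial; this correctly yields $e(T)$, $e(S_0^{\bullet})$, and all of the third bullet.

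The genuine gap is at the step you yourself isolated as the crux, and your resolution of it is inverted. The $\iota$-action on the fibre of $\eta$ at a fixed point $x_0\in\{\pm1\}$ is computed by transporting a flat section of $\sigma^*S$ along a half-loop from $x$ to $\iota(x)$ upstairs; that path maps to a \emph{full} loop around $\sigma(x_0)$ downstairs, so the action equals the monodromy of $S$ around $\sigma(x_0)$. Evaluating the character of a triple product at the fixed point $(1,1)$, where all three projections $\pi_i$ land at the point over $t=+2$, each factor $S_2^{\bullet}$ therefore contributes $-1$ (not $+1$) and each factor $S_{-2}^{\bullet}$ contributes $+1$ (not $-1$). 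The correct multiplication law is thus $S_2^{\lambda}\otimes S_2^{\mu}\cong S_{-2}^{\lambda\mu}$, etc., and a triple product is trivial exactly when it contains an even number of $S_2$-factors: $S_{-2}^{\lambda}\otimes S_{-2}^{\mu}\otimes S_{-2}^{\lambda\mu}\cong T$ (so $e=q^2-6q+9$), while $S_2^{\lambda}\otimes S_2^{\mu}\otimes S_2^{\lambda\mu}\cong S_0$ and $S_2^{\lambda}\otimes S_{-2}^{\mu}\otimes S_{-2}^{\lambda\mu}\cong S_0$ (so $e=-2q+6$) --- the opposite of what you assert. A convention-free check is connectivity of the paper's fibered products: the cover $\{\lambda=y^2,\ \mu=z^2,\ \lambda\mu=w^2\}$ has the two components $w=\pm yz$, each preserved by $(y,z,w)\mapsto(y^{-1},z^{-1},w^{-1})$, so its pushforward contains $T$ with multiplicity two, forcing the triple $S_{-2}$-product to be trivial; in the $(2,2,2)$ case the two components $w=\pm\imat\, yz$ are interchanged by the involution, the cover is connected, $T$ occurs once, and the triple product must be $S_0$.

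You should also be aware that the Proposition as printed is inconsistent with the paper's own proof on precisely these items: the proof computes $e(S_{-2}^{\lambda}\otimes S_{-2}^{\mu}\otimes S_{-2}^{\lambda\mu})=q^2-6q+9=e(T)$ and $e(S_{-2}^{\lambda}\otimes S_{-2}^{\mu}\otimes S_{2}^{\lambda\mu})=-2q+6=e(S_0^{\bullet})$, i.e.\ the triple products in the first two bullets of the statement should be interchanged. Your sign rule reproduces the printed (erroneous) bullets rather than the values the proof establishes, so the mistake in your argument is masked by what appears to be a typo in the statement; carrying out your own method with the correct equivariant signs would have exposed the discrepancy.
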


\begin{proof}
Recall that the basis is ${B}'=\{ (\lambda,\mu) \in (\mathbb{C}^*)^{2} | \lambda, \mu, \lambda\mu \neq \pm 1 \} /\mathbb{Z}_{2}$.
We compute the E-polynomial of each representation case by case:
\begin{itemize}
\item $e(T)$. Using (\ref{eqn:+-}), we compute 
$e( (\mathbb{C}-\{0,\pm 1\})^{2}/\mathbb{Z}_{2})=(q-2)^2+1$, since $e(\mathbb{C}-\{0,\pm 1\})^+=q-2$ and 
$e(\mathbb{C}-\{0,\pm 1\})^-=-1$. Also $e( \{ (\lambda,\mu) \in (\mathbb{C}-\{0,\pm 1\})^{2} | \lambda\mu = \pm 1 \} /\mathbb{Z}_{2})=
2e((\mathbb{C}-\{0,\pm 1\}) /\mathbb{Z}_{2})=2(q-2)$. So $e(T)=e({B}')=q^2-4q+5-(2q-4)=q^2-6q+9$.
\item $e(S_{0}^{\lambda})$. Since $S_{0}^{\lambda}=\pi^{\ast}_{1}(S_{0})$, we need to compute the E-polynomial 
of the pullback bundle of the fibration $\mathbb{C}-  \lbrace 0,\pm 1 \rbrace \longrightarrow \mathbb{C}-  \lbrace \pm 2 \rbrace$ 
that maps $\lambda\mapsto\lambda + \lambda^{-1}$, and which has Hodge monodromy representation equal to $T+S_{0}$. 
The pullback bundle is
  $$
  \xymatrix{E_{S_{0}} \subset \bar E_{S_0}=(\mathbb{C}-  \lbrace 0,\pm 1 \rbrace)\times (\mathbb{C}-  \lbrace 0,\pm 1 \rbrace) \ar[d]^{p} \ar[r] & 
 \mathbb{C}-  \lbrace 0, \pm 1\rbrace \ar[d]^{g} \\ {B}' \subset \bar B'=(\mathbb{C}-  \lbrace 0,  \pm 1 \rbrace)\times 
 (\mathbb{C}-  \lbrace 0,\pm 1 \rbrace)/\mathbb{Z}_{2} \ar[r]^-{\pi_{1}} & \mathbb{C} -  \lbrace \pm 2 \rbrace \cong 
 (\mathbb{C}-  \lbrace 0,\pm 1 \rbrace)/\mathbb{Z}_{2}}
 $$
where $g(\lambda)= \lambda + \lambda^{-1}$ and $p$ is the quotient map. $E_{S_0}=p^{-1}(B')$, and $B'= \bar B' - \{(\lambda,\mu) |\lambda\mu=\pm 1\}$.
Then $e(\bar E_{S_{0}})=(q-3)^{2}$, $e(p^{-1} (\{(\lambda,\mu) |\lambda\mu=\pm 1\}))=e(\{(\lambda,\mu) \in (\mathbb{C}-  \lbrace 0,\pm 1 \rbrace)^2|\lambda\mu=\pm 1\})=2(q-3)$.
So $e(E_{S_0})=e(T+S_{0}^{\lambda})=(q-3)^{2}-2(q-3)=q^{2}-8q+15$. Finally,
 $$
 e(S_{0}^{\lambda})=e(T+S_{0}^{\lambda})-e(T)=-2q+6.   
 $$

The diagram still commutes if we change $\pi_{1}$ by $\pi_{2}$ or $\pi_{3}$, so we infer that 
$S_{0}^{\lambda} \cong S_{0}^{\mu} \cong S_{0}^{\lambda \mu}$ as local systems. 
This implies that certain reductions can be made: since $S_{2}^{\bullet} \otimes S_{-2}^{\bullet} \cong S_{0}^{\bullet}$, 
whenever $S_{0}^{\lambda}, S_{0}^{\mu}$ or $S_{0}^{\lambda \mu}$ appear in a tensor product we can switch one 
by another in order to simplify the expression. For example:
$$
S_{2}^{\lambda}\otimes S_{2}^{\mu} \otimes S_{0}^{\lambda \mu} \cong S_{2}^{\lambda} \otimes S_{2}^{\mu} \otimes S_{0}^{\mu} \cong S_{2}^{\lambda} \otimes S_{-2}^{\mu},
$$
$$
S_{0}^{\lambda}\otimes S_{-2}^{\mu}\otimes S_{0}^{\lambda \mu} \cong S_{0}^{\mu} \otimes S_{-2}^{\mu} \otimes S_{0}^{\mu} \cong S_{-2}^{\mu},
$$
and so on. In fact, we deduce from this that, for all representations $R$ that appear in (\ref{eqnlongrepresentations}), 
either $e(R)=e(S_{0}^{\lambda})$, $e(R)=e(S_{\pm 2}^{\lambda})$, $e(R)=e(S_{\pm 2}^{\lambda}\otimes S_{\pm 2}^{\mu})$ or 
$e(R)=e(S_{\pm 2}^{\lambda} \otimes S_{\pm 2}^{\mu} \otimes S_{\pm 2}^{\lambda \mu})$,
as any representation where $S_{0}^{\bullet}$ appears can be simplified to one of these expressions.

\item $e(S_{-2}^{\lambda})$. To obtain $e(S_{-2}^{\lambda})$, we take the pullback under the map
$\CC-\{\pm 2,0\} \to \CC -\{\pm 2\}$, $x\mapsto  x^2-2$, which ramifies at $-2$. The pullback fibration is
 $$
 \xymatrix{E_{S_{-2}} \subset \bar E_{S_{-2}}=\left\{ \frac{(y,\mu)}{(y,\mu)\sim (y^{-1},\mu^{-1})}\right\}  
 \ar[r] \ar[d]^{p} & x=y+y^{-1} \in 
 \mathbb{C} -  \lbrace \pm 2,0 \rbrace \ar[d]^{2:1} \\ B'\subset \bar{B}'=\lbrace [(\lambda=y^{2},\mu)]\rbrace 
 \ar[r] &  x^{2}-2=y^{2}+y^{-2}=\lambda+\lambda^{-1} \in \mathbb{C}-  \lbrace \pm 2 \rbrace}
 $$
where $p(y,\mu)=(y^{2},\mu)\in {B}'$, $y\in \mathbb{C}-  \lbrace 0, \pm 1, \pm \imat \rbrace$.  
Therefore $e(\bar E_{S_{-2}})=(q-3)(q-2)+2$. Substracting
the contribution corresponding to the two hyperbolas $\lbrace y^{2}\mu=\pm 1 \rbrace$, which has
E-polynomial $2(q-3)$, we get $e(E_{S_{-2}})=e(T+S_{-2}^{\lambda})=(q-3)(q-2)+2-2(q-3)=q^{2}-7q+14$, and
 $$
 e(S_{-2}^{\lambda})=e(T+S_{-2}^{\lambda})-e(T)=-q+5 .
 $$
The diagram for $e(S_{2}^{\lambda})$ is similar:
 $$
 \xymatrix{E_{S_{2}} \subset \bar E_{S_{2}}=\left\{ \frac{(y,\mu)}{(y,\mu)\sim (y^{-1},\mu^{-1})}\right\}  
 \ar[r] \ar[d]^{p} & x=y+y^{-1} \in 
 \mathbb{C} -  \lbrace \pm 2,0 \rbrace \ar[d]^{2:1} \\ B'\subset \bar{B}'=\lbrace [(\lambda=-y^{2},\mu)]\rbrace 
 \ar[r] &  2-x^2=y^{2}+y^{-2}=\lambda+\lambda^{-1} \in \mathbb{C}-  \lbrace \pm 2 \rbrace}
 $$
but now $p(y,\mu)=(-y^{2},\mu)$. Since $E_{S_{2}}\cong E_{S_{-2}}$, we obtain that $e(S_{2}^{\lambda})=
e(S_{-2}^{\lambda})$ and analogous computations yield that $e(S_{\pm 2}^{\lambda})=e(S_{\pm 2}^{\mu})=e(S_{\pm 2}^{\lambda\mu })$. 

\item $e(S_{-2}^{\lambda} \otimes S_{-2}^{\mu})$. To compute the E-polynomial of this representation, we can use the 
fibration with Hodge monodromy representation $(T+S_{-2}^{\lambda})\otimes (T+S_{-2}^{\mu})$, which 
corresponds to the fibered product of two copies of the pullback fibration $E_{S_{-2}}$, one with 
$\lambda=y^{2}$, the other with $\mu=z^{2}$. The total space is
 $$
  E_{S_{-2,-2}}\subset \bar E_{S_{-2,-2}} =
  \left\{ \left( \frac{(y,\mu)}{(y,\mu)\sim (y^{-1},\mu^{-1})}, \frac{(\lambda,z)}{(\lambda,z)\sim (\lambda^{-1},z^{-1})} \right) \right\}
  \longrightarrow B'\subset \bar B' =\{[\lambda,\mu]\},
 $$
where $\lambda=y^{2}, \mu=z^{2}$ and $(y,z)\sim (y^{-1},z^{-1}), y,z\in \mathbb{C}-  \lbrace 0, \pm 1, \pm \imat \rbrace$. 
The E-polynomial is $e( \bar E_{S_{-2,-2}}) =(q-3)^{2}+4$. Recall that we need to substract the contribution of the (now four) 
hyperbolas given by $\lbrace yz=\pm 1 \rbrace$  and $\lbrace yz=\pm \imat \rbrace$. The action $(y,z)\sim (y^{-1},z^{-1})$ acts on 
each of the first two hyperbolas giving a contribution of $2(q-3)$, whereas it interchanges the last two, which gives $q-5$. We get 
$e((T+S_{-2}^{\lambda})\otimes (T+S_{-2}^{\mu}))=e(E_{S_{-2,-2}})=(q-3)^{2}+4-2(q-3)-(q-5)=q^2-9q+24$ and
 \begin{align*}
 e(S_{-2}^{\lambda}\otimes S_{-2}^{\mu}) & = e((T+S_{-2}^{\lambda})\otimes (T+S_{-2}^{\mu}))
-e(T)-e(S_{-2}^{\lambda})-e(S_{-2}^{\mu}) \\ 
 & = q^{2}-9q+24-(q^{2}-6q+9)-2(-q+5)  =-q+5.
 \end{align*}
Computing the different fibered products of $E_{S_{2}}$ and $E_{S_{-2}}$ gives us 
$$
e(S_{2}^{\lambda}\otimes S_{-2}^{\mu})=e(S_{-2}^{\lambda}\otimes S_{2}^{\mu})=e(S_{2}^{\lambda}\otimes S_{2}^{\mu})=-q+5.
$$
Changing the projection does not alter the computations, so $e(S_{\pm 2}^{\bullet}\otimes S_{\pm 2}^{\bullet})=-q+5$.
\item $e(S_{-2}^{\lambda}\otimes S_{-2}^{\mu} \otimes S_{-2}^{\lambda \mu})$. To compute this E-polynomial, 
we can compute the E-polynomial of the representation $(T+S_{2}^{\lambda})\otimes (T+S_{2}^{\mu})\otimes (T+S_{2}^{\lambda \mu})$, 
which corresponds to the fibered product of three copies of $E_{S_{-2}}$. The total space of this fibered product is given by
 $$
 \left( \frac{(y,\mu)}{(y,\mu)\sim (y^{-1},\mu^{-1})}, \frac{(\lambda,z)}{(\lambda,z)\sim (\lambda^{-1},z^{-1})},
 \frac{(w,\lambda^{-1})}{(w,\lambda^{-1})\sim (w^{-1},\lambda)} \right),
 $$
where $\lambda=y^{2}, \mu=z^{2}, \lambda \mu =w^{2}$ and $yz=\pm w$, $y,z,w \in \mathbb{C} -  
\lbrace 0, \pm 1, \pm \imat \rbrace$ and $(y,z)\sim (y^{-1},z^{-1})$. 
Taking into account the two possible signs for $w$, and substracting the contribution from the hyperbolas in $\bar B'$, 
we obtain that the E-polynomial is $2(q^2-9q+24)$. This implies that
\begin{align*}
 e(S_{-2}^{\lambda}\otimes S_{-2}^{\mu} \otimes S_{-2}^{\lambda \mu})& = e((T+S_{-2}^{\lambda})\otimes (T+S_{-2}^{\mu}) 
\otimes (T+S_{-2}^{\lambda \mu})) - e(T)-3e(S_{-2}^{\lambda})-3e(S_{-2}^{\lambda}\otimes S_{-2}^{\mu})\\
 & = 2(q^{2}-9q+24)-(q^{2}-6q+9)-6(-q+5) = q^{2}-6q+9=e(T).
\end{align*}
An analogous computation gives the same polynomial for $e(S_{2}^{\lambda}\otimes 
S_{2}^{\mu} \otimes S_{-2}^{\lambda \mu})$ and cyclic permutations of signs.

\item $e(S_{-2}^{\lambda}\otimes S_{-2}^{\lambda}\otimes S_{2}^{\lambda})$. As we did in 
the previous case, to compute the E-polynomial it suffices to take the fibered product of two copies 
of $E_{S_{-2}}$ and $E_{S_{2}}$. The total space is again parametrized by
 $$
 \left( \frac{(y,\mu)}{(y,\mu)\sim (y^{-1},\mu^{-1})}, \frac{(\lambda,z)}{(\lambda,z)\sim (\lambda^{-1},z^{-1})},
 \frac{(w,\lambda^{-1})}{(w,\lambda^{-1})\sim (w^{-1},\lambda)} \right),
 $$
 where now $\lambda=y^{2}, \mu=z^{2}$ and $\lambda \mu=-w^{2}, y,z,w\in \mathbb{C}-\lbrace 0,\pm 1, \pm \imat \rbrace$. 
In particular, this implies that $yz=\imat w$ and $yz=-\imat w$, which gives two components that get identified under 
the $\mathbb{Z}_{2}$-action given by $(y,z,w)\sim (y^{-1},z^{-1},w^{-1})$. Therefore, the quotient is parametrized by 
$(y,z)\in (\mathbb{C} - \lbrace 0,\pm 1, \pm \imat \rbrace)^{2}$, which produces $(q-5)^{2}$. Substracting the four hyperbolas, 
we get that the E-polynomial of the fibered product is $(q-5)^{2}-4(q-5)=q^{2}-14q+45$. So
 \begin{align*}
  e(S_{-2}^{\lambda}\otimes S_{-2}^{\mu} \otimes S_{2}^{\lambda \mu})& = 
e((T+S_{-2}^{\lambda})\otimes (T+S_{-2}^{\mu}) \otimes (T+S_{2}^{\lambda \mu})) 
- e(T)-3e(S_{-2}^{\lambda})-3e(S_{-2}^{\lambda}\otimes S_{-2}^{\mu})\\
 & = (q^{2}-14q+45)-(q^{2}-6q+9)-6(-q+5) = -2(q-3)=e(S_{0}^{\bullet}).
 \end{align*}
\end{itemize}

\end{proof}
Substituting the values just obtained for the E-polynomial of every irreducible representation 
in (\ref{eqnlongrepresentations}), and the values $a=q^3, b=-3q, c=3q^2, d=-1$, we obtain the 
E-polynomial of the total fibration
 \begin{align} \label{eqn:eZ'}
 e(\overline{Z}/\mathbb{Z}_{2}) & =e(\overline{Z}')=e(R(\overline{Z}')) \\
 &=q^{11}-6q^{10}+54q^8-12q^7+189q^6-915q^5+666q^4+153q^3-81q^2-43q-6. \nonumber
 \end{align}

Using the formula (\ref{eqn:RX4-RX4Z2->eX4}), we get that 
\begin{align*}
 e(W_5) &= e(Z) =  (q^{2}-q) e(\overline{Z}') + q\, e(\overline{Z})  \\
 & = q^{13}-6q^{12}-2q^{11}+51q^{10}+12q^9+216q^8-573q^7-198q^6+696q^5-18q^4-125q^3-45q^2-9q.
\end{align*}

\subsection{Generic case} \label{subsec:generic}

Let us deal finally with the case $(t_{1},t_{2},t_{3})\not \in C$, $t_{i}\neq \pm 2$, which corresponds to the open 
subset $U$ in $(\mathbb{C}^{\ast})^{3}$ defined by those representations whose $\xi_1,\xi_2,\xi_3$ are diagonalizable
and do not share an eigenvector. Choosing a basis that diagonalizes $\xi_{1}$, note that, if we write $\xi_{2}=\begin{pmatrix} a & b \\ c & d \end{pmatrix}$, then
 $$
 \begin{pmatrix} \lambda_1 & 0 \\ 0 & \lambda_1^{-1} \end{pmatrix} 
 \begin{pmatrix} a & b \\ c & d \end{pmatrix} \xi_{3} = \begin{pmatrix} -1 & 0 \\ 0 & -1 \end{pmatrix},
 $$
so that
 $$
 \xi_{3}= \begin{pmatrix}  -\lambda_1^{-1}d & \lambda_1 b \\ \lambda_1^{-1}c & -\lambda_1 a \end{pmatrix} 
 $$
and $bc\neq 0$ (see Lemma \ref{lem:C}). Conjugating by a diagonal matrix, we can assume that $b=1$. As 
$t_2=a+d$ and $t_3=- \lambda_1a-\lambda_1^{-1}d$, we have that 
 $a,d$ are determined by the values of $(t_{2},t_{3})$; and $c$ is determined by the equation 
$\det \xi_{2}=ad-bc=1$. We see that for fixed $(\lambda_1,t_{2},t_{3})$, 
$a,b,c,d$ are fully determined, and so are $\xi_{2},\xi_{3}$. 

Consider the $\ZZ_2\x\ZZ_2\x\ZZ_2$-cover given by $(\lambda_1,\lambda_{2},\lambda_{3}) \mapsto (t_{1},t_{2},t_{3})$ over
$(\CC-\{0,\pm 1\})^3$. Let $\bar E$ be the pull-back fibration. 
The fiber over $(\lambda_1,\lambda_{2},\lambda_{3})$ is isomorphic to
 $$
\overline{X}_{4,\lambda_1} \times \overline{X}_{4,\lambda_2} \times \overline{X}_{4, \lambda_3} .
 $$
Let $(A_1,B_1,A_2,B_2,A_3,B_3) \in \bar E$. Then $[A_1,B_1]=\xi_1$, $[A_2,B_2]=\xi_2$, $[A_3,B_3]=\xi_3$, where $a=a(\lambda_1,\lambda_2,\lambda_3),
b=1,c=c(\lambda_1,\lambda_2,\lambda_3), d=d(\lambda_1,\lambda_2,\lambda_3)$. Take $Q=Q(\lambda_1,\lambda_2,\lambda_3)$,
$S=S(\lambda_1,\lambda_2,\lambda_3)$ matrices such that $Q^{-1}\xi_2 Q=  \begin{pmatrix} \lambda_2 & 0 \\ 0 & \lambda_2^{-1} \end{pmatrix}$
and $S^{-1}\xi_3 S=  \begin{pmatrix} \lambda_3 & 0 \\ 0 & \lambda_3^{-1} \end{pmatrix}$.
Then $\Theta (A_1,B_1,A_2,B_2,A_3,B_3) = (A_1,B_1,Q^{-1} A_2Q,Q^{-1} B_2Q,S^{-1} A_3S,S^{-1} B_3S)$
identifies $\bar E$ with the subset of $\overline{X}_4 \x \overline{X}_4 \x \overline{X}_4$
where $(t_1,t_2,t_3)\not\in C$. The second and third copies of $\ZZ_2$ act as the standard $\ZZ_2$-action on 
the second and third copies of $\overline{X}_4$, respectively. 
The action of the first copy of $\ZZ_2$ is more delicate: it acts as conjugation by 
$P_0= \begin{pmatrix} 0 & 1 \\ c & 0 \end{pmatrix}$,
sending $\xi_1= \begin{pmatrix} \lambda_1 & 0 \\ 0 & \lambda_1^{-1} \end{pmatrix}
\mapsto  \begin{pmatrix} \lambda_1^{-1} & 0 \\ 0 & \lambda_1 \end{pmatrix}$,
$\xi_2= \begin{pmatrix} a& 1 \\ c & d \end{pmatrix} \mapsto  \begin{pmatrix} d & 1 \\ c & a \end{pmatrix}$.
Then under the isomorphism $\Theta$, it acts by conjugation by $Q^{-1}P_0  Q$  (resp.\ $S^{-1}P_0  S$) on the
second (resp.\ third) factor of $\overline{X}_4$. This matrix is easily computed to be
diagonal, so the action is (homologically) trivial.

The conclusion is that $\bar E/ \ZZ_2\x\ZZ_2\x\ZZ_2 $ is isomorphic to the open set of 
$\overline{X}_{4}/\mathbb{Z}_{2}\times \overline{X}_{4}/\mathbb{Z}_{2} \times \overline{X}_{4}/\mathbb{Z}_{2}$, 
where $(t_1,t_2,t_3)\not\in C$.

Now we need to compute the E-polynomial of 
$T=\left(\overline{X}_{4}/\mathbb{Z}_{2}\times \overline{X}_{4}/\mathbb{Z}_{2} \times
\overline{X}_{4}/\mathbb{Z}_{2} \right) \cap \{(t_1,t_2,t_3) \in C\}$. Here we can
parametrize $C \cong \{(\lambda,\mu) \in (\CC^*)^3 | \lambda,\mu,\lambda\mu\neq \pm 1\}/\ZZ_2$. 
The fiber over $(\lambda, \mu)$ is isomorphic to
$\overline{X}_{4,\lambda} \times \overline{X}_{4,\mu} \times \overline{X}_{4,-\lambda^{-1}\mu^{-1}}$,
hence this space is isomorphic to $\overline{Z}'$, studied in (\ref{eqn:barZ}). Using (\ref{eqn:eZ'}), we get
 \begin{align*}
 e(W_{6}) =&  \, e(\pgl)(e(\overline{X}_{4}/\mathbb{Z}_{2})^{3}- e(\overline{Z}')) \\
 = &\,  q^{15}-7q^{14}+8q^{13}+44q^{12}-78q^{11}-136q^{10}-153q^9 \\
  & +1149q^8-450q^7-1263q^6+798q^5+265q^4-119q^3-52q^2-7q.
 \end{align*}

\subsection{Final result} \label{subsec:finalresult}

If we add all the E-polynomials of the different strata, we get
 \begin{align*}
 e(W) & = e(W_1) +e(W_2) +e(W_3) +e(W_4) +e(W_5) +e(W_6) \\
 &= q^{15}-5q^{13}+10q^{11}-252q^{10}-20q^9+20q^7+252q^6-10q^5+5q^3-q.
 \end{align*}

Then
 $$
e(\cM^1)=e(W)/e(\pgl)=q^{12}-4q^{10}+6q^8-252q^7-14q^6-252q^5+6q^4-4q^2+1.
 $$
This agrees with the result in \cite{mereb:2010}, obtained by arithmetic methods.

\section{Hodge monodromy representation for the genus $2$ character variety} \label{sec:charvar-g=2}

We introduce the following sets associated to the representations of a genus $2$ complex curve,
and give the E-polynomials computed in  \cite{lomune}:
\begin{itemize}
\item $Y_0:=\{(A_1,B_1,A_2,B_2)\in \SL(2,\CC)^4\, |\, [A_1,B_1][A_2,B_2]=\Id\}$. Then
$e(Y_0)= q^9+q^8+12q^7+2q^6-3q^4-12q^3-q$, by \cite[Section 8.1]{lomune}.
\item $Y_1:=\{(A_1,B_1,A_2,B_2)\in \SL(2,\CC)^4\,|\, [A_1,B_1][A_2,B_2]=-\Id\}$. Then
 $e(Y_1)= q^9-3q^7-30q^6+30q^4+3q^3-q$, by \cite[Section 9]{lomune}.
\item $\overline{Y}_2:=\left\{(A_1,B_1,A_2,B_2)\in \SL(2,\CC)^4\,|\, [A_1,B_1][A_2,B_2]=J_+=\small{\left(
    \begin{array}{cc}
      1 & 1\\
      0 & 1\\
    \end{array}
  \right)}\right\}$, 
$e(\overline{Y}_2)= q^9-3q^7-4q^6-39q^5-4q^4-15q^3$, by \cite[Section 11]{lomune}.
  \item $\overline{Y}_3:=\left\{(A_1,B_1,A_2,B_2)\in \SL(2,\CC)^4\, |\, [A_1,B_1][A_2,B_2]=J_-=\small{\left(
    \begin{array}{cc}
      -1 & 1\\
      0 & -1\\
    \end{array}
  \right)}\right\}$. Then
$e(\overline{Y}_3)= q^9-3q^7+15q^6+6q^5+45q^4$, by \cite[Section 12]{lomune}.
\item $\overline{Y}_{4,\lambda}:=\left\{(A_1,B_1,A_2,B_2) \, |\, [A_1,B_1][A_2,B_2]=\small{\left(
    \begin{array}{cc}
      \lambda & 0\\
      0 & \lambda^{-1}\\
    \end{array}
  \right)} \right\}
  $,
for $\lambda\neq 0,\pm 1$. Then
$e(\overline{Y}_{4,\lambda})= q^9-3q^7+15q^6-39q^5+39q^4-15q^3+3q^2-1$, 
by \cite[Section 10]{lomune}.
\end{itemize}

Let $\overline{Y}_4:=\left\{(A_1,B_1,A_2,B_2,\lambda)\in \SL(2,\CC)^4 \x \CC^*\, |\, [A_1,B_1][A_2,B_2]=\small{\left(
    \begin{array}{cc}
      \lambda & 0\\
      0 & \lambda^{-1}\\
    \end{array}
  \right)},\, \lambda\neq 0,\pm 1\right\}$. We have a fibration 
 $$ 
 \overline{Y}_4 \too \CC-\{0,\pm 1\}.
 $$
If we take the quotient by the $\ZZ_2$-action there is another fibration
 $$ 
 \overline{Y}_4/\ZZ_2 \too \CC-\{\pm 2\}.
 $$
We are interested in the Hodge monodromy representations $R(\overline{Y}_4)$ and $R(\overline{Y}_4/\ZZ_2)$.

\begin{prop} \label{prop:RY4}
  $R(\overline{Y}_4)=(q^9-3q^7+6q^5-6q^4+3q^2-1)T + (15q^6-45q^5+45q^4-15q^3)N$.
\end{prop}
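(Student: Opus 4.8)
The fibration $\overline{Y}_4 \to \CC-\{0,\pm 1\}$ has trivial monodromy around $\pm 1$ and monodromy of order $2$ around $0$ (induced by conjugation by $P_0$, which sends $\xi_\lambda$ to $\xi_{\lambda^{-1}}$), exactly as for $\overline{X}_4$ in (\ref{eqn:RX4}). Hence $\Gamma=\ZZ_2$ and $R(\overline{Y}_4)=aT+bN$ for polynomials $a,b\in\ZZ[q]$. The sum of the two coefficients is the E-polynomial of the fiber, so $a+b=e(\overline{Y}_{4,\lambda})$, which is already listed. It therefore suffices to pin down the anti-invariant coefficient $b$, that is, the E-polynomial of the part of $H^*_c(\overline{Y}_{4,\lambda})$ on which the swap $\lambda\leftrightarrow\lambda^{-1}$ acts by $-1$.

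The plan is to stratify the fiber via $\eta:=[A_1,B_1]$. As the building blocks show, $\eta$ ranges over all of $\SL(2,\CC)$, and the constraint $[A_1,B_1][A_2,B_2]=\xi_\lambda$ forces $[A_2,B_2]=\eta^{-1}\xi_\lambda$, so that $\overline{Y}_{4,\lambda}=\bigsqcup_\eta f^{-1}(\eta)\times f^{-1}(\eta^{-1}\xi_\lambda)$, where $f(A,B)=[A,B]$. This is precisely the stratification used in \cite[Section 10]{lomune} to obtain $e(\overline{Y}_{4,\lambda})$; I would run it again while retaining the $\lambda$-monodromy, so that each stratum contributes an element of $R(\ZZ_2)[q]$ rather than merely a polynomial. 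Organizing by the conjugacy type of $\eta$ (governed by $\tr\eta$) and then by $\tr(\eta^{-1}\xi_\lambda)$, the building-block spaces $X_0,X_1,\overline{X}_2,\overline{X}_3,\overline{X}_{4,\lambda}$ together with the Hodge monodromy representations $R(\overline{X}_4)$ and $R(\overline{X}_4/\ZZ_2)$ from (\ref{eqn:RX4}) and (\ref{eqn:RX4Z2}) provide exactly the input needed.

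The easy strata are the isolated ones. When $\eta=\Id$ one gets $X_0\times\overline{X}_{4,\lambda}$, and over the whole $\lambda$-base this assembles into $X_0\times\overline{X}_4$ with Hodge monodromy $e(X_0)\,R(\overline{X}_4)$; similarly $\eta=-\Id$ gives $e(X_1)\,R(\overline{X}_4)$, after the harmless reparametrization $\lambda\mapsto-\lambda$ which fixes the ramification point $\lambda=0$. These already contribute genuine $N$-terms through the factor $3q^2-3q$ in $R(\overline{X}_4)$. The remaining strata, where $\eta$ is of Jordan type ($W_2,W_3$) or regular semisimple ($W_4$), are positive-dimensional: here $\tr(\eta^{-1}\xi_\lambda)$ varies with both $\lambda$ and the point of the conjugacy class, so one must sub-stratify by $\tr(\eta^{-1}\xi_\lambda)=\pm 2$ and feed the generic piece into $R(\overline{X}_4)$, tracking how the two commuting $\ZZ_2$-actions (the swap of the eigenvalues of $\eta$ and the swap $\lambda\leftrightarrow\lambda^{-1}$) act on cohomology. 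Summing all strata in $R(\ZZ_2)[q]$, I expect the $N$-coefficients to total $15q^6-45q^5+45q^4-15q^3$ and the $T$-coefficients to total $q^9-3q^7+6q^5-6q^4+3q^2-1$.

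I expect the regular-semisimple stratum $\eta\in W_4$ to be the main obstacle, precisely because the sub-stratification walls $\tr\eta=\pm 2$ and $\tr(\eta^{-1}\xi_\lambda)=\pm 2$ move with $\lambda$: extending the relevant local system across these walls (as in the passage to $\tilde B$ used for $\overline{X}_4$, and in Corollary~\ref{cor:thm1}) must be arranged so as not to corrupt the very $\lambda$-monodromy one is trying to measure. Two independent checks guard against errors. First, the identity $a+b=e(\overline{Y}_{4,\lambda})$ must hold, and indeed $a+b=q^9-3q^7+15q^6-39q^5+39q^4-15q^3+3q^2-1$, matching the listed value of $e(\overline{Y}_{4,\lambda})$. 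Second, the coefficient $15$ must be compatible with the $15(q^5-2q^4+q^3)N$ appearing in the parabolic monodromy (\ref{eqn:RcM}), after accounting for the quotient by the stabilizer of $\xi_\lambda$.
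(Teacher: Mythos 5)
Your setup matches the paper's own strategy: the paper also proves this by taking the stratification $\overline{Y}_{4,\lambda}=\bigsqcup_{i=1}^7 Z_i$ of \cite[Section 10]{lomune} (organized by the traces $t_1=\tr[B_2,A_2]$, $t_2=\tr[A_1,B_1]$, which is the same data as your $\tr\eta$, $\tr(\eta^{-1}\xi_\lambda)$) and re-running it while tracking the $\lambda$-monodromy, with $R(\overline{X}_4)$ and $R(\overline{X}_4/\ZZ_2)$ as inputs. Your treatment of the isolated strata $\eta=\pm\Id$, contributing $e(X_0)R(\overline{X}_4)$ and $e(X_1)R(\overline{X}_4)$, agrees with (part of) the paper's strata $Z_2$ and $Z_3$.

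However, there is a genuine gap: the positive-dimensional strata, which you yourself identify as ``the main obstacle,'' are never computed — you defer them (``I would run it again\dots'') and then simply assert the expected totals. This is exactly where all the substance of the paper's proof lies. Concretely: (i) for the strata where one trace equals $\pm 2$ and the other is generic (the paper's $Z_4$, $Z_5$), one must subtract the missing fiber over the moving point $t=\lambda+\lambda^{-1}$, and that fiber itself carries monodromy $R(\overline{X}_4)$ as $\lambda$ varies, giving contributions like $2(q-1)e(\overline{X}_2)\bigl(e(\overline{X}_4/\ZZ_2)T-R(\overline{X}_4)\bigr)$; (ii) for the hyperbola stratum ($Z_6$, where $ad=1$), the wall is a curve $H_\lambda$ moving with $\lambda$, and the paper handles it by forming the local system $\overline{Z}_6=\overline{X}_4\times m^*\overline{X}_4$ over $\CC^*\times\CC^*$ with monodromy group $\ZZ_2\times\ZZ_2$, pushing forward along $\pi_1$ via $T\mapsto e(T)T$, $N_1\mapsto e(T)N$, $N_2\mapsto e(N_2)T$, $N_{12}\mapsto e(N_2)N$ with $e(T)=q-1$, $e(N_2)=0$, and subtracting $4e(\overline{X}_{4,\lambda})R(\overline{X}_4)$ for the excluded curves $\mu=\pm1,\pm\lambda^{-1}$; (iii) the open stratum $Z_7$ is then obtained by complementation. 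None of this machinery appears in your proposal, and it cannot be waved through: the $N$-coefficients of $Z_4$--$Z_7$ are large and cancel massively against those of $Z_2$, $Z_3$ to produce the final $15q^3(q-1)^3$. Moreover, your two ``independent checks'' do not close the gap: the relation $a+b=e(\overline{Y}_{4,\lambda})$ is one linear equation that can never separate $b$ from $a$ (and that splitting is the entire content of the proposition), while the comparison with (\ref{eqn:RcM}) is circular, since in the paper that formula is \emph{deduced} from this proposition by dividing by $q-1$.
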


\begin{proof}
We follow the stratification $\overline{Y}_{4,\lambda_{0}}=\bigsqcup_{i=1}^7 Z_{i}$ given in \cite[Section 10] {lomune}, and 
study the behaviour of each stratum when $\lambda$ varies in $ \CC - \{ 0, \pm 1 \}$ to obtain the Hodge monodromy representation of $\overline{Y}_{4}$. 
Let $\xi=\begin{pmatrix} \lambda & 0 \\ 0 & \lambda^{-1} \end{pmatrix}$. As in  \cite[Section 10]{lomune}, we write
 $$
 \nu=[B_{2},A_{2}]= \begin{pmatrix} a & b \\ c & d \end{pmatrix}, \quad \delta=[A_{1},B_{1}] =
 \xi\nu = \begin{pmatrix} \lambda a & \lambda b \\ \lambda^{-1}c & \lambda^{-1}d \end{pmatrix},
 $$
and $t_{1}=\tr \nu$, $t_{2}=\tr \delta$. Note that every $(t_{1},t_{2}, \lambda)$ determines $a,d$ by
$$
a=\frac{t_2-\lambda^{-1}t_1}{\lambda-\lambda^{-1}}, \quad d=\frac{\lambda t_1-t_2}{\lambda-\lambda^{-1}}.
$$
Then $bc=ad-1$.

We look at the strata:
\begin{itemize}
\item $Z_{1}$, corresponding to $t_{1}=\pm 2, t_{2}=\pm 2$. In this case, both $\nu,\delta$ are of Jordan type. 
If we take the basis given by $\{ u_{1},u_{2} \}$, where $u_{1}$ is an eigenvector for $\nu$ and $u_{2}$ an eigenvector for $\delta$, then
$$
\nu=\begin{pmatrix} 1 & x \\ 0 & 1\end{pmatrix}, \quad \delta=\begin{pmatrix} 1 & 0 \\ y & 1\end{pmatrix}
$$
for certain $x,y\in \CC^*$. Now, since $\delta\nu^{-1}=\begin{pmatrix} \lambda & 0 \\ 0 & \lambda^{-1} \end{pmatrix}$, we obtain that 
$\lambda+\lambda^{-1}=2-xy$. We can fix $x=1$ by rescaling the basis, so $y$ is fixed and there is no monodromy around the origin. Therefore
\begin{align*}
R(Z_{1}) & =e(Z_{1})T=(q-1)(e(\overline{X}_{2})+e(\overline{X}_{3}))^{2}T \\
& = (4q^7-15q^5+5q^4+15q^3-9q^2)T,
\end{align*}
where $T$ is the trivial representation.

\item $Z_{2}$, corresponding to $t_{1}=2, t_{2}=\lambda+\lambda^{-1}$ and $t_{2}=2, t_{1}=\lambda+\lambda^{-1}$. We focus on the first case. 
In this situation, $bc=0$, so there are three possibilities: either $b=c=0$ (in which case $\nu=\Id$) or $b=0,c\neq 0$ or $b\neq 0,c=0$ (in either case 
there is a parameter  in $\CC^*$ and $\nu$ is of Jordan type). In every situation, $\nu$ has trivial monodromy, whereas 
$\delta\sim \begin{pmatrix} \lambda & 0 \\ 0 & \lambda^{-1} \end{pmatrix}$. This contributes $R(\overline{X}_{4})$. Therefore
\begin{align*}
R(Z_{2}) & =  2(e(X_{0})+2(q-1)e(\overline{X}_{2}))R(\overline{X}_{4}) \\
& = (6q^7-4q^6-6q^5-2q^4+4q^3+6q^2-4q)T + (18q^6-30q^5-6q^4+30q^3-12q^2)N.
\end{align*}

\item $Z_{3}$, given by $t_{1}=-2,t_{2}=-\lambda-\lambda^{-1}$ and $t_{2}=-2,t_{1}=-\lambda-\lambda^{-1}$. This is analogous to the previous case, so
\begin{align*}
R(Z_{3}) & =  2(e(X_{1})+2(q-1)e(\overline{X}_{3}))R(\overline{X}_{4}) \\
& = (4q^7+10q^6-12q^5-6q^4-10q^3+12q^2+2q)T+(12q^6+18q^5-66q^4+30q^3+6q^2)N.
\end{align*}

\item $Z_{4}$, defined by $t_{1}=2, t_{2}\neq \pm 2, \lambda+\lambda^{-1}$ and $t_{2}=2, t_{1}\neq \pm 2, \lambda+\lambda^{-1}$. 
Both cases are similar, so we do the first case.
For each $\lambda$, $(t_{1},t_{2})$ move in a punctured line $\{ (t_{1},t_{2}) \mid t_{1}=2, t_{2}\neq \pm 2, \lambda+\lambda^{-1} \}$, 
where $\nu$ is of Jordan form and $\delta$ is of diagonal type, with trace $t_{2}$. Both families can be trivialized, giving a contribution of 
$e(\overline{X}_{2})$ and $e(\overline{X}_{4}/\mathbb{Z}_{2})$. The missing fiber $\overline{X}_{4,\lambda}$ over $\lambda+\lambda^{-1}$, 
which needs to be removed, has monodromy representation $R(\overline{X}_{4})$ as $\lambda$ varies. Therefore
\begin{align*}
R(Z_{4}) & = 2(q-1)e(\overline{X}_{2})(e(\overline{X}_{4}/\mathbb{Z}_{2})T-R(\overline{X}_{4})) \\
& = (2q^8-12q^7+10q^6+36q^5-26q^4-36q^3+14q^2+12q)T+(-6q^6+24q^5-12q^4-24q^3+18q^2)N.
\end{align*}
The factor $(q-1)$ corresponds to the fact that now $bc\neq 0$, so there is the extra freedom given by $\CC^*$.
 
\item $Z_{5}$, defined by $t_{1}=-2, t_{2}\neq \pm 2,- \lambda-\lambda^{-1}$ and $t_{2}=-2, t_{1}\neq \pm 2, 
-\lambda-\lambda^{-1}$. Similarly to $Z_{4}$, we obtain
\begin{align*}
R(Z_{5}) & =  2(q-1)e(\overline{X}_{3})(e(\overline{X}_{4}/\mathbb{Z}_{2})T-R(\overline{X}_{4})) \\
& =  (2q^8-2q^7-24q^6+12q^5+34q^4-10q^3-12q^2)T+(-6q^6-6q^5+30q^4-18q^3)N.
\end{align*}

\item $Z_{6}$. This stratum corresponds to the set $\{ (t_{1},t_{2})\mid t_{1},t_{2}\neq \pm 2, ad=1 \}$, 
which is a hyperbola $H_\lambda$ for every $\lambda$. 
Since $bc=0$, we get a contribution of $2q-1$, arising from the disjoint cases $b=c=0$; $b=0,c\neq 0$; and $b\neq 0, c=0$. Parametrizing 
$H_\lambda$ by a parameter $\mu \in \CC^* -\{ \pm 1, \pm \lambda^{-1} \}$ as in \cite[Section 10]{lomune}, 
we obtain a fibration over $\CC^* -\{ \pm 1, \pm \lambda^{-1} \}$ 
whose fiber over $\mu$ is $\overline{X}_{4,\mu}\times \overline{X}_{4,\lambda\mu}$, for fixed $\lambda$. 
When $\lambda$ varies over $\mathbb{C}^{*} -\{ \pm 1 \}$, 
note that we can extend the local system trivially to the cases $\lambda,\mu = \pm 1$. This extension 
can be regarded as a local system over the set of $(\lambda,\mu) \in \CC^*\times \CC^*$
 $$
 \overline{Z}_{6}=\overline{X}_{4}\times m^{*}\overline{X}_{4} \longrightarrow \CC^*\times\CC^*\, , 
 $$
where $m: \CC^*\times \CC^* \rightarrow \CC^*$ maps $(\lambda,\mu)\mapsto \lambda\mu$. 
The Hodge monodromy representation of $\overline{Z}_{6}$ belongs to $R(\zz)[q]$ (with generators 
$N_{1},N_{2}$ denoting the  representation which is not trivial over the generator of the 
fundamental group of the first and second copies of $\CC^*$ respectively, and
$N_{12}=N_1\otimes N_2$). Since $R(\overline{X}_{4})=(q^3-1)T+(3q^2-3q)N$, we get
\begin{align*}
R_{ \CC^*\times\CC^*} (\overline{Z}_{6}) & = ((q^{3}-1)T+(3q^{2}-3q)N_{2}) \otimes ((q^{3}-1)T+(3q^2-3q)N_{12}) \\
& = (q^{3}-1)^2 T +(3q^{2}-3q)^{2}N_{1} + (3q^{2}-3q)(q^{3}-1)N_{2}+(3q^{2}-3q)(q^{3}-1)N_{12}.
\end{align*}
We write this as $R_{ \CC^*\times\CC^*}(\overline{Z}_{6})=aT+bN_{1}+cN_{2}+dN_{12}$.
To obtain the Hodge monodromy representation over $\lambda \in \CC^*$, we use the projection 
$\pi_{1}:\CC^*\times\CC^* \rightarrow \CC^*$, $(\lambda,\mu)\mapsto \lambda$. 
Then $T\mapsto e(T) T$, $N_2 \mapsto e(N_2)T$, $N_1\mapsto e(T) N$, 
$N_{12} \mapsto e(N_2)N$ for the representations. Using that $e(T)=q-1$ and $e(N_2)=0$ and substracting the contribution from the sets $\mu=\pm 1, \pm \lambda^{-1}$, which yield $4e(\overline{X}_{4,\lambda})R(\overline{X}_{4})$, we get 
\begin{align*}
R(\overline{Z}_{6}) & =a\,e(T)T+b\,e(T)N+c\,e(N_{2})T+d\,e(N_{2}) N -4e(\overline{X}_{4,\lambda})R(\overline{X}_{4}) \\
& =    (q^7- 5 q^6 - 12 q^5+ 10 q^4 + 10 q^3 + 12 q^2 - 11 q-5)T + ( - 3 q^5 - 51 q^4 + 99 q^3- 33 q^2-12 q )N
\end{align*}
and
\begin{align*}
R(Z_6) =& (2q-1) R(\overline{Z}_{6}) \\
=& (2q^8-11q^7-19q^6+32q^5+10q^4+14q^3-34q^2+q+5)T \\
 &+(-6q^6-99q^5+249q^4-165q^3+9q^2+12q)N.
\end{align*}

\item $Z_{7}$, corresponding to the open stratum given by the set of $(t_{1},t_{2})$ such that $t_{i} \neq \pm 2$, 
$i=1,2$ and $(t_{1},t_{2})\not\in H_\lambda$. If we forget about the condition $(t_{1},t_{2})\in H_\lambda$, $Z_{7}$ is a fibration over 
$(t_{1},t_{2})$ with fiber isomorphic to $\overline{X}_{4,\mu_{1}}\times \overline{X}_{4,\mu_{2}}$, $t_{i}=\mu_i+\mu^{-1}_i$, $i=1,2$. 
Its monodromy is trivial, as the local system is trivial when $\lambda$ varies. The contribution over $H_\lambda$, already computed in the previous
stratum, is $R(\overline{Z}_6)$. So we get
\begin{align*}
R(Z_{7}) =& (q-1)(e(\overline{X}_{4}/\mathbb{Z}_{2})^2 T-R(\overline{Z}_{6})) \\
=&  (q^9-6q^8+8q^7+27q^6-41q^5-21q^4+23q^3+26q^2-11q-6)T \\
 &+ (3q^6+48q^5-150q^4+132q^3-21q^2-12q)N.
\end{align*}
\end{itemize}

Adding all pieces, we get
$$
R(\overline{Y}_{4})=(q^9-3q^7+6q^5-6q^4+3q^2-1)T + (15q^6-45q^5+45q^4-15q^3)N.
$$
\end{proof}

Dividing by $q-1$, we get the formula (\ref{eqn:RcM}).

We want to compute the Hodge monodromy representation of $\overline{Y}_{4}/\mathbb{Z}_{2}$. We have the following.

\begin{lem}
 The Hodge monodromy representation $R(\overline{Y}_{4}/\mathbb{Z}_{2})$ is of the form
$R(\overline{Y}_{4}/\mathbb{Z}_{2})= aT+bS_{2}+cS_{-2}+dS_{0}$, for some polynomials $a,b,c,d\in \ZZ[q]$.
\end{lem}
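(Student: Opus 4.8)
The statement only concerns the \emph{shape} of $R(\overline{Y}_4/\ZZ_2)$, so the plan is to show that the monodromy group $\Gamma$ of the fibration $\overline{Y}_4/\ZZ_2 \to \CC-\{\pm 2\}$ is a quotient of $\ZZ_2\times\ZZ_2$ generated by the monodromies $\rho(\gamma_2),\rho(\gamma_{-2})$ around the two punctures $t=\pm 2$. Once $\Gamma$ is abelian and finite of this type, the framework of Section \ref{sec:method} makes $R(\overline{Y}_4/\ZZ_2)$ a well-defined element of $R(\Gamma)[q]\subseteq R(\ZZ_2\times\ZZ_2)[q]$; since the irreducible representations of $\ZZ_2\times\ZZ_2$ are exactly $T$, $S_2$, $S_{-2}$ and $S_0=S_2\otimes S_{-2}$ (with $S_{\pm 2}$ nontrivial precisely around $\gamma_{\pm 2}$), this yields the asserted form $aT+bS_2+cS_{-2}+dS_0$.

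First I would exploit the double cover $p:\CC-\{0,\pm 1\}\to\CC-\{\pm 2\}$, $\lambda\mapsto\lambda+\lambda^{-1}$, which is the quotient by the \emph{free} $\ZZ_2$-action $\lambda\mapsto\lambda^{-1}$ (free because the fixed points $\lambda=\pm 1$ have been removed). As a branched cover $\PP^1\to\PP^1$ it ramifies over $t=2$ and $t=-2$ at $\lambda=1$ and $\lambda=-1$, while $t=\infty$ has the two unramified preimages $\lambda=0,\infty$. Because the action on the base is free, the Cartesian square relating the two $\ZZ_2$-quotients identifies the pullback $p^{*}(\overline{Y}_4/\ZZ_2)$ with $\overline{Y}_4$. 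Hence the restriction of $\rho$ to the index-two subgroup $p_{*}\pi_1(\CC-\{0,\pm 1\})$ is exactly the monodromy of $\overline{Y}_4$, which by Proposition \ref{prop:RY4} factors through $\ZZ_2=\langle\gamma_0\rangle$ and is trivial around $\gamma_{\pm 1}$.

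The two relations that collapse $\Gamma$ now follow from the lifting combinatorics. Since $t=\pm 2$ are branch points, $\gamma_{\pm 2}^2$ lifts to a loop encircling $\lambda=\pm 1$, whose $\overline{Y}_4$-monodromy is trivial; this gives $\rho(\gamma_2)^2=\rho(\gamma_{-2})^2=\Id$. Moreover $\gamma_2\gamma_{-2}$ lies in the index-two subgroup (it lifts to a loop around $\lambda=0$), so by the previous paragraph $\rho(\gamma_2\gamma_{-2})$ lands in $\ZZ_2$ and has order dividing two. Writing $r=\rho(\gamma_2)$ and $s=\rho(\gamma_{-2})$, the relations $r^2=s^2=(rs)^2=\Id$ present precisely the Klein four-group, so $\Gamma$ is an abelian, finite quotient of $\ZZ_2\times\ZZ_2$, which gives the statement.

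I expect the only delicate point to be the lifting combinatorics of the branched cover: one must confirm that $\gamma_{\pm 2}^2$ encircles the ramification points $\lambda=\pm 1$, where the monodromy of $\overline{Y}_4$ vanishes, while $\gamma_2\gamma_{-2}$ encircles $\lambda=0$, where it has order two. It is exactly this bookkeeping that pins $\Gamma$ down to the Klein four-group rather than a larger dihedral group, and that (in the sequel) will let one read off the actual coefficients $a,b,c,d$ by locating the $\gamma_0$-monodromy of $R(\overline{Y}_4)$ inside $R(\overline{Y}_4/\ZZ_2)$.
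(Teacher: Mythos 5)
Your proof is correct and follows essentially the same route as the paper: pull back along the double cover $\lambda\mapsto\lambda+\lambda^{-1}$, identify the pullback of $R(\overline{Y}_{4}/\ZZ_2)$ with $R(\overline{Y}_{4})$, and invoke Proposition \ref{prop:RY4} to bound the monodromy. Your write-up is in fact more explicit than the paper's four-line argument, which leaves implicit exactly the lifting combinatorics you spell out (that $\gamma_{\pm 2}^2$ lifts to loops around $\lambda=\pm 1$ where the monodromy is trivial, and that $\gamma_2\gamma_{-2}$ lifts to a loop around $\lambda=0$, which rules out a larger dihedral group and pins the monodromy group down to a quotient of $\ZZ_2\times\ZZ_2$).
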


\begin{proof}
 The Hodge monodromy representation $R(\overline{Y}_{4}/\mathbb{Z}_{2})$ lies in the representation ring of the
 fundamental group of $\CC-\{\pm 2\}$. Under the double cover $\CC-\{0,\pm 1\} \to \CC-\{ \pm 2\}$, it reduces
to $R(\overline{Y}_{4})$. By Proposition \ref{prop:RY4},  $R(\overline{Y}_{4})$ is of order $2$. Hence
$R(\overline{Y}_{4}/\mathbb{Z}_{2})$ has only monodromy of order $2$ over the loops $\gamma_{\pm 2}$ 
around the points $\pm 2$. This is the statement of the lemma.
\end{proof}

To compute $a,b,c,d\in \ZZ[q]$, we  compute the E-polynomial of the twisted $\SL(2,\CC)$-character variety (\ref{eqn:twisted}) in
another way. Stratify 
 $$
 W:=\{(A_1,B_1,A_2,B_2,A_3,B_3)\in \SL(2,\CC)^{6}\, | \, [A_1,B_1][A_2,B_2]=-[A_3,B_3]\}
 $$
as follows:
\begin{itemize}
\item $W_{0}= \lbrace (A_1,B_1,A_2,B_2,A_3,B_3) \mid [A_1,B_1][A_2,B_2]=-[B_3,A_3]=\Id \rbrace$. Then 
 $$
 e(W_0)=e(Y_0)e(X_1)=q^{12}+q^{11}+11q^{10}+q^9-12q^8-5q^7-12q^6+3q^5+11q^4+q^2.
 $$
\item $W_{1}= \lbrace (A_1,B_1,A_2,B_2,A_3,B_3)  \mid [A_1,B_1][A_2,B_2]=-[B_3,A_3]=-\Id \rbrace$. Then 
 $$
 e(W_1)=e(Y_1)e(X_0)=q^{13}+4q^{12}-4q^{11}-46q^{10}-117q^9+72q^8+243q^7-18q^6-124q^5-16q^4+q^3+4q^2.
 $$
\item $W_{2}= \lbrace (A_1,B_1,A_2,B_2,A_3,B_3) \mid [A_1,B_1][A_2,B_2]=-[B_3,A_3]\sim J_{+} \rbrace$. Then 
 $$
 e(W_2)=e(\pgl/U) e(\overline{Y}_2)e(\overline{X}_3)=q^{14}+3q^{13}-4q^{12}-16q^{11}-48q^{10}-108q^9+24q^8+76q^7+27q^6+45q^5.
 $$
\item $W_{3}=\lbrace (A_1,B_1,A_2,B_2,A_3,B_3) \mid [A_1,B_1][A_2,B_2]=-[B_3,A_3]\sim J_{-} \rbrace$. Then 
 $$
 e(W_3)=e(\pgl/U) e(\overline{Y}_3)e(\overline{X}_2)=q^{14}-2q^{13}-7q^{12}+23q^{11}-9q^{10}-33q^9-93q^8-123q^7+108q^6+135q^5
 $$
\item $W_{4}= \lbrace (A_1,B_1,A_2,B_2,A_3,B_3) \mid [A_1,B_1][A_2,B_2]=-[B_3,A_3]\sim \begin{pmatrix} \lambda & 0 \\ 0 & \lambda^{-1} \end{pmatrix}, \lambda \neq 0, \pm 1  \rbrace$.
\item $\overline{W}_{4}= \lbrace (A_1,B_1,A_2,B_2,A_3,B_3,\lambda) \mid [A_1,B_1][A_2,B_2]=-[B_3,A_3]=\begin{pmatrix} \lambda & 0 \\ 0 & \lambda^{-1} \end{pmatrix}, \lambda \neq 0, \pm 1  \rbrace$.
\item $\overline{W}_{4,\lambda}= \lbrace (A_1,B_1,A_2,B_2,A_3,B_3)  \mid [A_1,B_1][A_2,B_2]=-[B_3,A_3]=\begin{pmatrix} \lambda & 0 \\ 0 & \lambda^{-1} \end{pmatrix}\rbrace$, where 
$\lambda \neq 0, \pm 1$.
\end{itemize}

Using the formula in Section \ref{subsec:finalresult},
 \begin{align*} 
  e(W_4) &= e(W)-e(W_0)-e(W_1)-e(W_2)-e(W_3) \\
 &= q^{15}-2q^{14}-7q^{13}+6q^{12}+6q^{11}-160q^{10}+237q^9+9q^8-171q^7+147q^6-69q^5+5q^4+4q^3-5q^2-q.
 \end{align*}

For the last stratum, note that:
 \begin{equation*} 
 \overline{W}_{4,\lambda}=\overline{Y}_{4,\lambda}\times \overline{X}_{4,-\lambda}\, .
 \end{equation*}
So $ R(\overline{W}_4/\ZZ_2)=R(\overline{Y}_4/\ZZ_2)\otimes R(\tau^*\overline{X}_4/\ZZ_2)$, where
$\tau: \CC-\{\pm 2\}\to \CC-\{\pm 2\}$, $\tau(x)=-x$. Then
 \begin{eqnarray*} 
 R(\overline{W}_4/\ZZ_2)&=&R(\overline{Y}_4/\ZZ_2)\otimes \tau^*R(\overline{X}_4/\ZZ_2) \\
  &=& (aT +b S_2+cS_{-2}+d S_0) \otimes (q^3T+3q^2 S_2 -3q S_{-2} -S_0) \\
  & =& (q^3a+3q^2b-3qc-d)T + (3q^2a+q^3b-c-3qd)S_{2} \\
  & & + (-3qa-b+q^3c+3q^2d)S_{-2} + (-a-3qb+3q^2c+q^3d)S_{0} \\
 &=& a'T +b' S_2+c'S_{-2}+d' S_0
\end{eqnarray*}
Using (\ref{eqn:RX4-RX4Z2->eX4}), we get
 \begin{align*}
e(W_4) &= (q^2-q) e(\overline{W}_4/\ZZ_2) +q\, e(\overline{W}_4) \\
 &= (q^2-q) ((q-2)a'-(b'+c'+d'))+ q((q-3)(a'+d')-2(b'+c')) ,
 \end{align*}
 which gives us the equation
 \begin{align} \label{eqn:HMRY4z2ec1}
 e(W_4) = & \, a(q^6-2q^5-4q^4+3q^2+2q) + b(2q^5-7q^4-3q^3+7q^2+q)S \nonumber \\
  & + c(-q^5-4q^4+4q^2+q) + d(-5q^4-q^3+5q^2+q).
 \end{align}
 
We can obtain another equation if we recall that
$$
Y_{4}:= \{ (A_{1},B_{1},A_{2},B_{2}) \mid [A_{1},B_{1}][A_{2},B_{2}]\sim 
\begin{pmatrix} \lambda & 0 \\ 0 & \lambda^{-1} \end{pmatrix}, \lambda \neq 0, \pm 1\}.
$$
Using that $\SL(2,\mathbb{C})^{4}=\bigsqcup_{i=0}^{4} Y_{i}$, we obtain that:
\begin{align*}
e(Y_{4}) & = e(\SL(2,\mathbb{C})^{4})-e(Y_{0})-e(Y_{1})-e(Y_{2})-e(Y_{3}) \\
& = q^{12}-2q^{11}-4q^{10}+6q^9-6q^8+18q^7-6q^6-18q^5+15q^4-6q^3+2q.
\end{align*}
But the E-polynomial of $Y_{4}$ can again be obtained using the Hodge monodromy representation 
$R(\overline{Y}_{4}/\mathbb{Z}_{2})$, using (\ref{eqn:RX4-RX4Z2->eX4}),
\begin{align} \label{eqn:HMRY4z2ec2}
e(Y_{4}) &= (q^{2}-q)e(\overline{Y}_{4}/\ZZ_2) +q\,e(\overline{Y}_{4}) \nonumber \\
& = (q^2-q) ((q-2)a-(b+c+d)) +q((q-3)(a+d)-2(b+c)) \\
& = (q^3-2q^2-q)a -(q^2+q)(b+c) -2qd . \nonumber
\end{align}
Finally, two more equations arise from the E-polynomial of the fiber of $\overline{Y}_{4}/\ZZ_2 \longrightarrow \CC-\{ \pm 2 \}$,
\begin{equation} \label{eqn:HMRY4z2ec3}
e(Y_{4,\lambda})=a+b+c+d,
\end{equation}
and from the Hodge monodromy representation $R(\overline{Y}_4)=(q^9-3q^7+6q^5-6q^4+3q^2-1)T 
+ (15q^6-45q^5+45q^4-15q^3)N$ given in Proposition \ref{prop:RY4}. Since $R(\overline{Y}_{4})=(a+d)T+(b+c)N$, we get the equation
\begin{equation} \label{eqn:HMRY4z2ec4}
  a+d=q^9-3q^7+6q^5-6q^4+3q^2-1.
\end{equation}

From equations \eqref{eqn:HMRY4z2ec1}, \eqref{eqn:HMRY4z2ec2}, \eqref{eqn:HMRY4z2ec3} and (\ref{eqn:HMRY4z2ec4}), we find 
 \begin{align*}
 a&= q^9 - 3q^7 + 6q^5\\
 b&= -45q^5 - 15q^3\\
 c&= 15q^6 + 45q^4\\
 d&=-6q^4 + 3q^2-1.
 \end{align*}
We have proved:
\begin{prop} \label{RY4z2}
$R(\overline{Y}_{4}/\ZZ_2)=(q^9-3q^7+6q^5)T-(45q^5+15q^3)S_{2}+(15q^6+45q^4)S_{-2}+(-6q^4+3q^2-1)S_{0}$.
\end{prop}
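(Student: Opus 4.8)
The plan is to regard $a,b,c,d\in\ZZ[q]$ --- which exist by the preceding lemma --- as four unknown polynomials and to determine them from four independent linear relations, each obtained by computing a suitable E-polynomial in two different ways. Since there are exactly four unknowns, four genuinely independent relations suffice, and the entire content of the argument is in producing them and checking independence.

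First I would extract two relations from data already in hand. Pull $R(\overline{Y}_4/\ZZ_2)$ back along the double cover $\CC-\{0,\pm 1\}\to\CC-\{\pm 2\}$, $\lambda\mapsto\lambda+\lambda^{-1}$, which ramifies exactly over $\pm 2$: the character $S_0=S_2\otimes S_{-2}$ is precisely the one cutting out this cover, so $T$ and $S_0$ pull back to the trivial local system while $S_2$ and $S_{-2}$ both pull back to the order-two representation $N$. Hence $R(\overline{Y}_4)=(a+d)T+(b+c)N$, and comparison with Proposition \ref{prop:RY4} determines $a+d$ and $b+c$. A second relation comes from the fibre: the fibre of $\overline{Y}_4/\ZZ_2\to\CC-\{\pm 2\}$ is $\overline{Y}_{4,\lambda}$, so forgetting the monodromy (the augmentation $S_i\mapsto 1$) gives $a+b+c+d=e(\overline{Y}_{4,\lambda})$, a known polynomial.

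Two further relations come from E-polynomials of spaces fibred over $\CC-\{\pm 2\}$, to which the recovery formula \eqref{eqn:RX4-RX4Z2->eX4} and Corollary \ref{cor:general-fibr} apply. The first is $e(Y_4)$: from $\SL(2,\CC)^4=\bigsqcup_{i=0}^4 Y_i$ one reads off $e(Y_4)$ explicitly, while the fibration formulae applied to $R(\overline{Y}_4/\ZZ_2)$ express $e(Y_4)$ linearly in $a,b,c,d$ (in fact only through $a$, $d$ and $b+c$); equating gives the third relation. The decisive one is $e(W_4)$ for the twisted variety of Section \ref{sec:twisted}: re-stratifying $W$ by the conjugacy type of the product commutator $[A_1,B_1][A_2,B_2]=-[B_3,A_3]$ gives strata $W_0,\dots,W_4$ whose polynomials, except the last, factor through the genus-$2$ blocks $e(Y_i)$, so subtraction from the already-computed $e(W)$ yields $e(W_4)$ explicitly. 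On the other hand $\overline{W}_{4,\lambda}=\overline{Y}_{4,\lambda}\times\overline{X}_{4,-\lambda}$, whence $R(\overline{W}_4/\ZZ_2)=R(\overline{Y}_4/\ZZ_2)\otimes\tau^*R(\overline{X}_4/\ZZ_2)$ with $\tau(x)=-x$ swapping $S_2\leftrightarrow S_{-2}$ in the known $R(\overline{X}_4/\ZZ_2)$ of \eqref{eqn:RX4Z2}; expanding this product in $R(\ZZ_2\times\ZZ_2)[q]$ and applying the fibration formulae once more writes $e(W_4)$ linearly in $a,b,c,d$, and equating with the explicit value gives the fourth relation.

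I expect the main obstacle to lie in this last relation: one must check that the new stratification of $W$ is compatible with the Section \ref{sec:twisted} computation of $e(W)$, and --- more delicately --- that the identification $\overline{W}_{4,\lambda}=\overline{Y}_{4,\lambda}\times\overline{X}_{4,-\lambda}$ transports the $\ZZ_2$-action correctly, so that the Hodge monodromy genuinely factors as the stated $\tau$-twisted tensor product. The other subtlety is independence. The first two relations fix $a+d$ and $b+c$; the $e(Y_4)$ relation, which sees $b$ and $c$ only through $b+c$, then separates $a$ from $d$ as soon as its $a$- and $d$-coefficients differ; and the $e(W_4)$ relation, whose coefficients of $b$ and $c$ are distinct, finally separates $b$ from $c$. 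Granting these non-degeneracies the linear system has a unique solution, which reproduces the asserted $a,b,c,d$ and hence the stated representation $R(\overline{Y}_4/\ZZ_2)$.
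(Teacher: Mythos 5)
Your proposal is correct and follows essentially the same route as the paper: the paper's proof solves exactly the linear system you describe, namely equations \eqref{eqn:HMRY4z2ec1} (from $e(W_4)$ via $\overline{W}_{4,\lambda}=\overline{Y}_{4,\lambda}\times\overline{X}_{4,-\lambda}$ and the $\tau$-twisted tensor product), \eqref{eqn:HMRY4z2ec2} (from $e(Y_4)$), \eqref{eqn:HMRY4z2ec3} (the fibre relation) and \eqref{eqn:HMRY4z2ec4} (the pullback $R(\overline{Y}_4)=(a+d)T+(b+c)N$ compared with Proposition \ref{prop:RY4}). The only cosmetic difference is that you extract both $a+d$ and $b+c$ from Proposition \ref{prop:RY4}, making the fibre relation redundant, whereas the paper uses $a+d$ together with the fibre relation to recover $b+c$; the two systems are trivially equivalent.
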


\section{E-polynomial of the character variety of genus $3$} \label{sec::charvar}

Let $\mathcal{M}=\cM(\SL(2,\CC))$ be the character variety of a genus $3$ complex curve $X$, i.e, the moduli 
space of semisimple representations of its fundamental group into $\sldos$. It can be defined as the space
 $$
 \mathcal{M} = V //\pgl,
 $$
where
 $$
 V= \lbrace (A_1,B_1,A_2,B_2,A_3,B_3) \in \sldos^{6} \mid [A_1,B_1][A_2,B_2][A_3,B_3]=\Id \rbrace.
 $$
We stratify $V$ as follows:
\begin{itemize}
\item $V_{0}= \lbrace (A_1,B_1,A_2,B_2,A_3,B_3)  \mid [A_1,B_1][A_2,B_2]=[B_3,A_3]=\Id \rbrace$.
Then
 $$
 e(V_0)=e(Y_0)e(X_0)= q^{13}+5q^{12}+15q^{11}+45q^{10}-8q^9-53q^8-32q^7-45q^6+23q^5+44q^4+q^3+4q^2.
 $$
\item $V_{1}= \lbrace (A_1,B_1,A_2,B_2,A_3,B_3)  \mid [A_1,B_1][A_2,B_2]=[B_3,A_3]=-\Id \rbrace$. Then
 $$
 e(V_1)=e(Y_1)e(X_1)= q^{12}-4q^{10}-30q^9+3q^8+60q^7+3q^6-30q^5-4q^4+q^2.
 $$
\item $V_{2}= \lbrace (A_1,B_1,A_2,B_2,A_3,B_3)  \mid [A_1,B_1][A_2,B_2]=[B_3,A_3]\sim J_{+} \rbrace$.
 $$
 e(V_2)=e(\pgl/U) e(\overline{Y}_2)e(\overline{X}_2)= q^{14}-2q^{13}-7q^{12}+4q^{11}-16q^{10}+84q^9+132q^8-44q^7-65q^6-42q^5-45q^4.
 $$
\item $V_{3}=\lbrace (A_1,B_1,A_2,B_2,A_3,B_3) \mid [A_1,B_1][A_2,B_2]=[B_3,A_3]\sim J_{-} \rbrace$.
 $$
 e(V_3)=e(\pgl/U) e(\overline{Y}_3)e(\overline{X}_3)=q^{14}+3q^{13}-4q^{12}+3q^{11}+54q^{10}+57q^9+84q^8-63q^7-135q^6.
 $$
\item $V_{4}= \lbrace (A_1,B_1,A_2,B_2,A_3,B_3)  \mid [A_1,B_1][A_2,B_2]=[B_3,A_3]\sim \begin{pmatrix} \lambda & 0 \\ 0 & \lambda^{-1} \end{pmatrix}, \lambda \neq 0, \pm 1  \rbrace$.
\end{itemize}

For computing $e(Y_{4})$, we define
$$
\overline{V}_{4,\lambda}:= \left\{ (A_1,B_1,A_2,B_2,A_3,B_3) \mid [A_1,B_1][A_2,B_2]=[B_3,A_3]= \begin{pmatrix} \lambda & 0 \\ 0 & \lambda^{-1} \end{pmatrix} \right\},
$$
for $\lambda \neq 0,\pm 1$. There is a fibration $\overline{V}_{4} \longrightarrow \mathbb{C}-  \lbrace 0,\pm 1 \rbrace$ with fiber $\overline{V}_{4,\lambda}$. 
Note that $\overline{V}_{4,\lambda}\cong \overline{Y}_{4,\lambda} \times \overline{X}_{4,\lambda}$, so
\begin{align*}
 R(\overline{V}_4/\ZZ_2) =&R(\overline{Y}_4/\ZZ_2) \otimes R(\overline{X}_4/\ZZ_2) \\
 =&((q^9-3q^7+6q^5)T-(45q^5+15q^3)S_{2}+(15q^6+45q^4)S_{-2}+(-6q^4+3q^2-1)S_{0}) \\ & \otimes (q^3T-3qS_{2}+3q^2S_{-2}-S_{0}) \\
 =& (q^{12}-3q^{10}+51q^8+270q^6+51q^4-3q^2+1)T +(-3q^{10}-36q^8-66q^6-36q^4-3q^2)S_{2} \\
 & +(3q^{11}+6q^9+63q^7+63q^5+6q^3+3q)S_{-2} + (-q^9-183q^7-183q^5-q^3)S_{0},
\end{align*}
which we write as $R(\overline{V}_{4}/ \ZZ_2)=\tilde{a}T+\tilde{b}S_{2}+\tilde{c}S_{-2}+\tilde{d}S_{0}$. If we apply \eqref{eqn:RX4-RX4Z2->eX4},
\begin{align*}
 e(V_{4}) & =  q(q^{2}-2q-1)\tilde{a}-q(q+1)(\tilde{b}+\tilde{c})-2q\tilde{d} \\
 &  =  q^{15}-2q^{14}-7q^{13}+6q^{12}+51q^{11}-70q^{10}+192q^9-171q^8-216q^7+237q^6-24q^5+5q^4+4q^3-5q^2-q.
\end{align*}
{}From this
 \begin{align} \label{eqn:VV}
 e(V) & =e(V_0)+e(V_1)+e(V_2)+e(V_3)+e(V_4) \nonumber \\
 & = q^{15}-5q^{13}+q^{12}+73q^{11}+9q^{10}+295q^9-5q^8-295q^7-5q^6-73q^5+5q^3-q.
 \end{align}

\subsection{Contribution from reducibles}

To compute the E-polynomial of $\mathcal{M}=\cM(\SL(2,\CC))$, 
we need to take a GIT quotient and differentiate between reducible and irreducible orbits. 

In \cite[Section 8]{lomune}, this analysis is done in the case of $g=2$, by stratifying the set
of irreducible orbits, and computing the E-polynomial of each stratum. The number of strata
increases rapidly with the genus. Therefore, for $g=3$ we are going to 
follow the method in \cite{lamu} which consists on computing the E-polynomial of the
reducible locus (which has fewer strata) and substracting it from the total.

A reducible representation given by $(A_1,B_1,A_2,B_2,A_3,B_3)$ is $S$-equivalent to
\begin{equation}\label{eqn:aa}
\left( \begin{pmatrix} \lambda_{1} & 0 \\ 0 & \lambda_{1}^{-1} \end{pmatrix},  \begin{pmatrix} \lambda_{2} & 0 \\ 0 & \lambda_{2}^{-1} \end{pmatrix},  
\begin{pmatrix} \lambda_{3} & 0 \\ 0 & \lambda_{3}^{-1} \end{pmatrix},  \begin{pmatrix} \lambda_{4} & 0 \\ 0 & \lambda_{4}^{-1} \end{pmatrix},  
\begin{pmatrix} \lambda_{5} & 0 \\ 0 & \lambda_{5}^{-1} \end{pmatrix},  \begin{pmatrix} \lambda_{6} & 0 \\ 0 & \lambda_{6}^{-1}\end{pmatrix} \right),
\end{equation}
under the equivalence relation $(\lambda_{1},\lambda_{2},\lambda_{3},\lambda_{4},\lambda_{5},\lambda_{6}) \sim (\lambda_{1}^{-1},\lambda_{2}^{-1},\lambda_{3}^{-1},\lambda_{4}^{-1},\lambda_{5}^{-1},\lambda_{6}^{-1}) $ given by the permutation of the eigenvectors. Under the action $\lambda \mapsto \lambda^{-1}$ we have that $e(\mathbb{C}^{\ast})^{+}=q$, $e(\mathbb{C}^{\ast})^{-}=-1$, so we obtain
\begin{align*}
e(\cM^{red}) & =  e((\mathbb{C}^{\ast})^{6}/ \mathbb{Z}_{2}) \\ 
& =   (e(\mathbb{C}^{\ast})^{+})^{6}+ \binom{6}{2} (e(\mathbb{C}^{\ast})^{+})^{4}(e(\mathbb{C}^{\ast})^{-})^{2} + \binom{6}{4} (e(\mathbb{C}^{\ast})^{+})^{2}(e(\mathbb{C}^{\ast})^{-})^{4}+ (e(\mathbb{C}^{\ast})^{-})^{6} \\
& =  q^{6}+15q^{4}+15q^{2}+1.
\end{align*}

A reducible representation happens when there is a common eigenvector. So in a suitable basis, it is
\begin{equation}\label{eqn:bb}
\left( \begin{pmatrix} \lambda_{1} & a_1 \\ 0 & \lambda_{1}^{-1} \end{pmatrix},  \begin{pmatrix} \lambda_{2} & a_2 \\ 0 & \lambda_{2}^{-1} \end{pmatrix},  
\begin{pmatrix} \lambda_{3} & a_3 \\ 0 & \lambda_{3}^{-1} \end{pmatrix},  \begin{pmatrix} \lambda_{4} & a_4 \\ 0 & \lambda_{4}^{-1} \end{pmatrix},  
\begin{pmatrix} \lambda_{5} & a_5 \\ 0 & \lambda_{5}^{-1} \end{pmatrix},  \begin{pmatrix} \lambda_{6} & a_6 \\ 0 & \lambda_{6}^{-1}\end{pmatrix} \right).
\end{equation}
This is parametrized by $(\CC^*\x \CC)^6$. The condition $[A_{1},B_{1}][A_{2},B_{2}][A_{3},B_{3}]=\Id$ is rewritten as
\begin{equation}\label{eqn:cc}
 \lambda_2 ( \lambda_1^2- 1)a_2 -\lambda_1( \lambda_2^2- 1)a_1 
 +\lambda_4 ( \lambda_3^2- 1)a_4 -\lambda_3( \lambda_4^2- 1)a_3
 +\lambda_6 ( \lambda_5^2- 1)a_6 -\lambda_5( \lambda_6^2- 1)a_5  =0.
\end{equation}
There are four cases:
\begin{itemize}
\item $R_1$ given by $(a_1,a_2,a_3,a_4,a_5,a_6) \in
\langle (\lambda_1-\lambda_1^{-1},\lambda_2-\lambda_2^{-1},\lambda_3-\lambda_3^{-1},
\lambda_4-\lambda_4^{-1},\lambda_5-\lambda_5^{-1}, \lambda_6-\lambda_6^{-1}) \rangle$ and $(\lambda_{1},\lambda_{2},\lambda_{3},\lambda_{4},\lambda_{5},\lambda_{6}) \neq (\pm 1, \pm 1, \pm 1, \pm 1, \pm 1, \pm 1)$.
Then we can conjugate the representation (\ref{eqn:bb}) to the diagonal form (\ref{eqn:aa}). 
In this case we can suppose all $a_i=0$, and the stabilizer are the diagonal matrices $D\subset \PGL(2,\CC)$.
There is an action of $\ZZ_2$ given by interchanging of the two basis vectors, and if we write 
$A:= (\CC^*)^6-\{(\pm 1,\pm 1, \pm 1, \pm 1, \pm 1, \pm 1 )\}$, the stratum
is $(A \times \PGL(2,\CC)/D)/\ZZ_2$. Note that $e(A)^+= q^{6}+15q^{4}+15q^{2}-63$,
$e(A)^-=e(A)-e(A)^+=-(6q^5+20q^3+6q)$, and
$e(\PGL(2,\CC)/D)^+=q^2$, $e( \PGL(2,\CC)/D)^-=q$. So
  $$e(R_1)= q^8+9q^6-5q^4-69q^2.$$
 
\item  $R_2$ given by $(a_1,a_2,a_3,a_4,a_5,a_6) \not\in
\langle (\lambda_1-\lambda_1^{-1},\lambda_2-\lambda_2^{-1},\lambda_3-\lambda_3^{-1},
\lambda_4-\lambda_4^{-1},\lambda_5-\lambda_5^{-1}, \lambda_6-\lambda_6^{-1}) \rangle$ and $(\lambda_{1},\lambda_{2},\lambda_{3},\lambda_{4},\lambda_{5},\lambda_{6}) \neq (\pm 1, \pm 1, \pm 1, \pm 1, \pm 1, \pm 1)$.
Then (\ref{eqn:cc}) determines a hyperplane $H\subset \CC^6$, and the
condition for $(a_1,a_2,a_3,a_4,a_5,a_6)$ defines 
 a line $\ell\subset H$. If $U'\cong D\x U$ denotes the upper triangular matrices,
we have a surjective map $A \x (H-\ell) \x \PGL(2,\CC) \to R_2$ and the fiber is isomorphic
to $U'$. So
 \begin{align*}
 e(R_2) &=((q-1)^6-64)(q^5-q) (q^3-q)/(q^2-q) \\
 &= q^{12}-5q^{11}+9q^{10}-5q^9-6q^8+14q^7-78q^6-58q^5+5q^4-9q^3+69q^2+63q.
 \end{align*}
 
\item $R_3$, given by $(\lambda_{1},\lambda_{2},\lambda_{3},\lambda_{4},\lambda_{5},\lambda_{6})= (\pm 1, \pm 1, \pm 1, \pm 1, \pm 1, \pm 1)$, $(a_{1},a_{2},a_{3},a_{4},a_{5},a_{6})=(0,0,0,0,0,0)$. This is the case where $A_{i}=B_{i}= \pm \Id$, $i=1,2,3$, which gives $64$ points. Therefore
$$
e(R_3)=64.
$$

\item $R_4$, given by $(\lambda_{1},\lambda_{2},\lambda_{3},\lambda_{4},\lambda_{5},\lambda_{6}) = (\pm 1, \pm 1, \pm 1, \pm 1, \pm 1, \pm 1)$, $(a_{1},a_{2},a_{3},a_{4},a_{5},a_{6})\neq (0,0,0,0,0,0)$. In this case, there is at least a matrix of Jordan type. The diagonal matrices act projectivizing the set $(a_{1},a_{2},a_{3},a_{4},a_{5},a_{6})\in \CC -\{(0,0,0,0,0,0) \}$ and the stabilizer is isomorphic to $U$. So
$$
e(R_4)=64\, e(\mathbb{P}^5)e(\PGL(2,\CC)/U)=64q^7+64q^6-64q-64.
$$
\end{itemize}

Adding all up, we have
\begin{align*}
 e(V^{red}) & = e(R_1)+e(R_2)+e(R_3)+e(R_4)=  q^{12}-5q^{11}+9q^{10}-5q^9-5q^8+78q^7-5q^6-58q^5-9q^3-q,
\end{align*}
and hence
 $$
 e(V^{irr})=e(V)-e(V^{red})= 
 q^{15}-5q^{13}+78q^{11}+300q^9-373q^7-15q^5+14q^3 ,
 $$
and thus
 $$
 e(\cM^{irr})=e(V^{irr})/e(\pgl)= q^{12}-4q^{10}+74q^8+374q^6+q^4-14q^2.
 $$

Finally, we have
$$ 
 e(\cM)=e(\cM^{red})+ e(\cM^{irr})=q^{12}-4q^{10}+74q^8+375q^6+16q^4+q^2+1.
$$

\end{document}